\documentclass[a4paper,11pt]{article}

\usepackage{amsthm,amsmath,amssymb}
\usepackage[latin1]{inputenc}
\usepackage[english]{babel}
\usepackage{mathscinet}
\usepackage{graphics}
\usepackage{graphicx}
\usepackage[all,cmtip]{xy}
\usepackage{multirow}
\usepackage{hyperref}

\theoremstyle{plain}
\newtheorem{theo}{Theorem}[section]
\newtheorem{prop}[theo]{Proposition}
\newtheorem{cor}[theo]{Corollary}
\newtheorem{lemma}[theo]{Lemma}

\theoremstyle{definition}
\newtheorem{defi}[theo]{Definition}
\newtheorem{ex}[theo]{Example}

\theoremstyle{remark}
\newtheorem{remark}[theo]{Remark}
\newtheorem{nothing}[theo]{\noindent\!\!\bf}

\DeclareMathOperator{\lcm}{lcm}
\DeclareMathOperator{\Sing}{Sing}

\DeclareMathOperator{\ord}{ord}

\DeclareMathOperator{\pr}{pr}

\def\Z{\mathbb{Z}}
\def\N{\mathbb{N}}
\def\C{\mathbb{C}}
\def\Q{\mathbf{Q}}

\def\P{\mathbb{P}}
\def\w{\omega}
\def\l{\ell}
\def\E{\mathcal{E}}

\newcommand\bd{\mathbf{d}}

\newcommand\bxi{\boldsymbol{\xi}}
\newcommand\be{\mathbf{e}}
\newcommand\br{\mathbf{r}}
\newcommand\bs{\mathbf{s}}
\newcommand\bt{\mathbf{t}}

\title{\bf Embedded $\mathbf{Q}$-Resolutions for Yomdin-L\^{e} Surface Singularities}
\author{Jorge Mart\'{\i}n-Morales\footnote{Partially supported by the projects MTM2010-21740-C02-02, ``E15 Grupo Consolidado Geometr\'{\i}a'' from the goverment of Aragón, FQM-333 from ``Junta de Andalucía'', and PRI-AIBDE-2011-0986 Acción Integrada hispano-alemana.}}
\date{Centro Universitario de la Defensa - IUMA.\\ Academia General Militar, Ctra.~de Huesca s/n.\\ 50090, Zaragoza, Spain.\\ jorge@unizar.es}

\begin{document}

\maketitle

\vspace{-0.25cm}

\begin{abstract}
In a previous work we have introduced and studied the notion of embedded $\mathbf{Q}$-resolution, which essentially consists in allowing the final ambient space to contain abelian quotient singularities. Here we explicitly compute an embedded $\mathbf{Q}$-resolution of a Yomdin-Lê surface singularity $(V,0)$ in terms of a (global) embedded $\mathbf{Q}$-resolution of their tangent cone by means of just weighted blow-ups at points. The generalized A'Campo's formula in this setting is applied so as to compute the characteristic polynomial. As a consequence, an exceptional divisor in the resolution of $(V, 0)$, apart from the first one which might be special, contributes to its complex monodromy if and only if so does the corresponding divisor in the tangent cone. Thus the resolution obtained is optimal in the sense that the weights can be chosen so that every exceptional divisor in the $\mathbf{Q}$-resolution of $(V, 0)$, except perhaps the first one, contributes to its monodromy.

\vspace{0.25cm}

\noindent \textit{Keywords:} Quotient singularity, weighted blow-up, embedded $\Q$-res\-o\-lu\-tion, Yomdin-Lê singularity, characteristic polynomial, monodromy.

\vspace{0.25cm}

\noindent \textit{MSC 2000:} 32S25, 32S45.
\end{abstract}


\section*{Introduction}

Let $(V,0)\subset (\C^3,0)$ be a germ of surface singularity in $\C^3$. By definition, $V$ is the zero set of a holomorphic function $f:U \to \C$, where $U\subset \C^3$ is a small neighborhood of the origin and $f(0)=0$. Denote also by $f$ the germ at the origin of this function; it is an element of the local ring $\C\{x,y,z\}$.

Consider the decomposition of $f$ into homogeneous parts,
$$
  f(x,y,z) = f_m(x,y,z) + f_{m+1}(x,y,z) + \cdots,
$$
that is, $f_i$ is homogeneous of degree $i$ and $f_m\neq 0$. The integer $m$ is the multiplicity of the singularity and the order of the series $f$. Denote by ${\bf C}:= V(f_m)\subset \P^2$ the projective plane curve defined by the tangent cone of the singularity. The following two families are considered in this work separately:
\begin{enumerate}
\item {\em Superisolated singularity} (or, shortly, SIS): the local equation $f$ satisfies
$\Sing({\bf C})\cap V(f_{m+1}) = \emptyset$ as a subset in $\P^2$.
\item {\em Yomdin-L\^{e} singularity} (YLS): the decomposition of $f$ into homogeneous polynomials is of the form $f=f_m + f_{m+k} + \cdots$, $k\geq 1$, and the condition $\Sing({\bf C})\cap V(f_{m+k}) = \emptyset$ holds in $\P^2$.
\end{enumerate}

These singularities have been extensively studied by many authors, see for instance the survey \cite{ALM06} where part of the theory of these singularities and their applications including some new and recent developments are reviewed. The SIS, i.e.~$k=1$, were introduced by Luengo and also appear in a paper by Stevens, where the $\mu$-constant stratum is considered, see \cite{Luengo87} and \cite{Stevens89}. Afterward Artal described in his PhD thesis \cite{Artal94} an embedded resolution of such singularities using blow-ups at points and rational curves. However, no embedded resolution is found in the literature for YLS with $k\geq 2$.

\vspace{0.25cm}

In this paper, the new techniques developed in~\cite{AMO11a, AMO11b, Martin11} are partially applied to study these two families of singularities. More precisely, we present here a detailed explicit description of an embedded $\Q$-resolution for YLS in terms of a (global) embedded $\Q$-resolution of their tangent cone. It is proven that only weighted blow-ups at points are needed. By contrast, the final total space produced has abelian quotient singularities.

\vspace{0.25cm}

The main result of this paper is a collection of several results that can be summarized as follows, cf.~Lemma~\ref{step1}, Proposition~\ref{step_b_SIS}, Theorem~\ref{summery_theorem} for SIS and Lemma~\ref{step1_YS}, Proposition~\ref{step_b_YS}, Theorem~\ref{summery_theorem_YS} for YLS.

\begin{theo}
Let $\varrho^{P}: Y^{P} \to (\C^2, P)$ be an embedded $\Q$-resolution of the tangent cone $({\bf C},P)$ for each $P \in \Sing({\bf C})$. Assume that
$$
(\varrho^{P})^{*}({\bf C},P) = \widehat{{\bf C}} + \sum_{a \in S(\Gamma_{+}^{P})} m_a^{P} \mathcal{E}_a^{P}
$$
is the total transform of $({\bf C},P)$, where $\E_a^{P}$ is the exceptional divisor of the $(p_a^P,q_a^P)$-blow-up at a point $P_a$ belonging to the locus of non-transversality. Denote by $\nu_a^P$ the $(p_a^P,q_a^P)$-multiplicity of ${\bf C}$ at $P_a$.

Then, one can construct an embedded $\Q$-resolution $\rho: X \to (\C^3,0)$ of the Yomdin-L\^{e} singularity $(V,0)$ such that the total transform is
$$
  \rho^{*}(V,0) = \widehat{V} + m \, E_0 + \sum_{\begin{subarray}{c} P \in \Sing({\bf C}) \\ a \in S(\Gamma_{+}^{P})\end{subarray}} \frac{(m+k) \cdot m_a^P}{\gcd(k,m_a^P)} \ E_{a}^{P},
$$
and $E_a^P$ appears after the $\left(\frac{k \, p_a^P}{\gcd(k,\nu_a^P)}, \frac{k \, q_a^P}{\gcd(k, \nu_a^P)}, \frac{\nu_a^P}{\gcd(k,\nu_a^P)} \right)$-blow-up at the point $P_a$ (the locus of non-transversality in dimension 2 and 3 are identified).
\end{theo}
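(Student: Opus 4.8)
My plan is to reduce the three–dimensional problem to the already available planar resolution $\varrho^P$ by one preliminary blow-up followed by a divisor–by–divisor lift. First I would blow up the origin of $\C^3$ with the trivial weight $(1,1,1)$. In the chart $\{x\neq 0\}$, writing $(x,y,z)=(x,x\tilde y,x\tilde z)$, the pullback factors as $f\circ\pi=x^{m}\,g$ with $g=f_m(1,\tilde y,\tilde z)+x^{k}f_{m+k}(1,\tilde y,\tilde z)+\cdots$, so $E_0=\{x=0\}\cong\P^2$ appears with multiplicity $m$ and $\widehat V$ meets $E_0$ along $\mathbf{C}$. At a smooth point of $\mathbf{C}$ the factor $\phi$ defining $\mathbf{C}$ is a submersion, hence $g$ is smooth and $\widehat V$ is transverse to $E_0$; thus $\pi^{*}(V,0)$ is already $\Q$–normal crossing away from $\Sing(\mathbf{C})$, and all further work localizes at the finitely many points $P\in\Sing(\mathbf{C})$.

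At such a $P$ the hypothesis $\Sing(\mathbf{C})\cap V(f_{m+k})=\emptyset$ gives $f_{m+k}(P)\neq0$, so in local coordinates $(u,v,x)$ with $E_0=\{x=0\}$ the strict transform takes the Yomdin--L\^e normal form $g=\phi(u,v)+x^{k}\cdot(\text{unit})$, where $\phi$ is a local equation of $(\mathbf{C},P)$. I would then run an induction along the tree of $\varrho^P$, lifting each $(p_a^P,q_a^P)$–blow-up to a weighted blow-up of the ambient (quotient–singular) space. The transverse weight is dictated by the balancing requirement that $x^{k}$ and the planar part acquire the same weighted order: since the $(p_a^P,q_a^P)$–order of the strict transform of $\mathbf{C}$ is $\nu_a^P$, the weight of $x$ must satisfy $k\cdot(\text{weight of }x)=\nu_a^P$, and clearing denominators by $k/\gcd(k,\nu_a^P)$ yields exactly the primitive triple $\bigl(\tfrac{k p_a^P}{\gcd(k,\nu_a^P)},\tfrac{k q_a^P}{\gcd(k,\nu_a^P)},\tfrac{\nu_a^P}{\gcd(k,\nu_a^P)}\bigr)$ of the statement. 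The invariant that makes the induction go through is that, in the chart over the new exceptional divisor, the strict transform is again of the form $\phi'(u',v')+(x')^{k}\cdot(\text{unit})$ with the \emph{same} $k$: the unit survives restriction to the exceptional locus, and the two terms remain balanced by construction. Hence the next lifted blow-up is admissible, the process stops exactly when $\varrho^P$ does, and a final smooth–point computation gives the $\Q$–normal crossing condition.

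For the multiplicities I would argue valuation–theoretically. Let $w_a=\ord_{E_a^P}$ be the divisorial valuation computing the coefficient of $E_a^P$ in $\rho^{*}(V,0)$. From $f\circ\pi=x^{m}g$ and the balancing $w_a(\phi)=k\,w_a(x)$ inherited at every stage, one gets $w_a(g)=\min\bigl(w_a(\phi),k\,w_a(x)\bigr)=k\,w_a(x)$, whence $w_a(f)=m\,w_a(x)+k\,w_a(x)=(m+k)\,w_a(x)$. Thus the whole computation collapses to the single transverse quantity $w_a(x)$, and the statement is equivalent to the identity $w_a(x)=m_a^P/\gcd(k,m_a^P)$, i.e.\ to matching the order of the first exceptional divisor $E_0$ along $E_a^P$ with the planar total–transform multiplicity $m_a^P$ up to the factor $\gcd(k,m_a^P)$.

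The main obstacle is precisely this last identity. The multiplicities $m_a^P$ are \emph{total}–transform multiplicities, accumulating the contributions of the previously created $\E_b^P$ through the points they pass, whereas the transverse weights are fixed by the \emph{strict}–transform multiplicities $\nu_a^P$; reconciling the additive recursion obeyed by the integer orders of $x^{m}$, of the exceptional monomials, and of $g$ with the closed form $m_a^P/\gcd(k,m_a^P)$ forces a careful bookkeeping of the $\gcd(k,\cdot)$ normalizations that appear when the successive weighted blow-ups are written in reduced form on the intermediate spaces with abelian quotient singularities. Controlling this interplay between $\nu_a^P$ and $m_a^P$ — together with checking $\Q$–normal crossings in the presence of those quotient singularities — is where the real difficulty lies; the preliminary blow-up and the inductive lifting are, by comparison, formal.
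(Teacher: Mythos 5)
Your construction coincides with the paper's: the initial blow-up at the origin, the local normal form $z^m\bigl[z^k+H(x,y)\bigr]$ at the points of $\Sing({\bf C})$, and the inductive lift of each planar $(p_a^P,q_a^P)$-blow-up to the balanced weight triple $\bigl(\tfrac{kp_a^P}{\gcd(k,\nu_a^P)},\tfrac{kq_a^P}{\gcd(k,\nu_a^P)},\tfrac{\nu_a^P}{\gcd(k,\nu_a^P)}\bigr)$, cf.\ Lemmas~\ref{step_zero_YS}, \ref{step1_YS} and Proposition~\ref{step_b_YS}. But as a proof it has a genuine gap, which you yourself flag: the identity $w_a(x)=m_a^P/\gcd(k,m_a^P)$, to which you reduce the coefficient formula, \emph{is} the quantitative content of the theorem, and you do not prove it --- you only observe that it requires ``careful bookkeeping''. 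That bookkeeping is not routine: $m_a^P$ is a total-transform multiplicity on a $2$-dimensional quotient space, while $w_a(x)$ is a divisorial order upstairs in dimension $3$, and the two live on charts related by non-normalized quotient types. The paper's induction carries along exactly the data needed to compare them: the local model $x^{n_a}y^{n_{a'}}H(x,y):X(\be;\br,\bs)\to\C$ with non-normalized exponents $n_a$ (which are \emph{not} the multiplicities $m_a$), the compatibility congruence~\eqref{condition_on_t}, and, crucially, the computation in the remark after Proposition~\ref{step_b_YS} that the normalizing index of the first chart is $L=\gcd(\l,n_b/k)$, which is what converts the raw order $\frac{(m+k)n_b}{kL}$ into $\frac{(m+k)m_b}{\gcd(k,m_b)}$. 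Without an argument at this level your formula is asserted, not derived. (A smaller point in the same step: when $w_a(\phi)=k\,w_a(x)$ the valuation inequality only gives $w_a(g)\geq k\,w_a(x)$; equality holds because the leading form of $\phi$ involves only $u,v$ while the other term contributes a pure power of $x$, so no cancellation can occur --- this needs saying.)

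The second gap is the sentence ``the process stops exactly when $\varrho^P$ does, and a final smooth-point computation gives the $\Q$-normal crossing condition''. This is the claim $NT(\pi_b^{*}\cdots\pi_0^{*}(V))=NT(\varpi_b^{*}\cdots({\bf C}))$, and it is not automatic. In dimension $3$ the curve $\widehat V\cap E_b=\{z^k+H_{\nu_b}(x,y)=0\}$ is a $k$-fold cyclic cover of its planar counterpart; its intersection numbers with $E_0\cap E_b$ are rescaled rather than equal (by $k_1/k$ at the first step, see~\eqref{inter_YS}, and by $1/O(E_{b,z})$ in general), and it meets the lines at infinity $E_a\cap E_b$ in \emph{new} points --- e.g.\ $\gcd\bigl(k,m(\check{\E}_{b,x})\bigr)$ of them --- that have no $2$-dimensional counterpart and where transversality must be checked separately, cf.\ Remark~\ref{differ2}(3) and Proposition~\ref{step_b_YS}(4). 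Controlling these points, and the triple and quadruple points, requires the intersection theory on $\P^2_{\w}/\mu_d$ of Proposition~\ref{bezout_th_P2w-mu_d}; your sketch does not engage with any of it. So the proposal is a correct plan that mirrors the paper's strategy, but both places where the actual work lies are left open.
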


For $k=1$, the main advantage compared with Artal's resolution~\cite{Artal94} is that in the latter $\nu_a^P$ (rather than just one) blow-ups at points and rational curves at each step are needed to achieve a similar situation. On the other hand, as it is said above, no embedded resolution for YLS with $k\geq 2$ can be found in the literature. The main difficulty in computing a (usual) embedded resolution of this kind of singularities is that after several blow-ups at points and rational curves, following the ideas of~\cite{Artal94}, one eventually obtains a branch of resolutions depending on $k$. Thus the study of this singularities by using the classical tools does not seem to be very helpful. 

\vspace{0.15cm}

The generalized A'Campo's formula \cite[Theorem 2.8]{Martin11} is applied and the characteristic polynomial and the Milnor number are calculated as an application, see Theorem~\ref{char_poly_SIS} and Corollary~\ref{cor_char_poly_SIS} for SIS and Theorem~\ref{char_poly_YS} and Corollary~\ref{cor_char_poly_YS} for YLS. In particular, the formulas by D.~Siersma~\cite{Siersma90} and J.~Stevens~\cite{Stevens89} for the characteristic polynomial of YLS can be obtained in this way. Other more sophisticated invariants, including mixed Hodge structure of the cohomology of the Milnor fiber, are the subjects of our study for the future.

\vspace{0.15cm}

As a consequence, we show that an exceptional divisor $E_a^P$ in the resolution of $(V, 0)$ contributes to the complex monodromy if and only if so does the corresponding divisor $\E_a^P$ in the tangent cone, see Lemmas~\ref{euler_char_SIS} and~\ref{euler_char_YS}. Thus the weights can be chosen so that every exceptional divisor in the $\Q$-resolution of $(V, 0)$, except perhaps the first one $E_0$, contributes to its monodromy.

\vspace{0.15cm}

Although the proofs presented here are a bit technical, which involve a lot of calculations with local equations on charts, the final construction is very useful. In fact, this work can be considered as the first step in the computation of the mixed Hodge structures together with the monodromy action of YLS. Note that, following the ideas of~\cite{Artal94}, these tools can be used in combination with the generalized Steenbrink's spectral sequence of~\cite{Martin11b} to find two YLS having the same characteristic polynomials, the same abstract topologies, but different embedded topologies (it is enough to take a Zariski pairs in the tangent cones). Besides, these techniques can be applied to study superisolated singularities in higher dimension, see~\cite[\S\,VI.4]{Martin11PhD}, and the same applies to weighted Yomdin-L\^{e} surface singularities, see~\cite[\S\,VII.3]{Martin11PhD}.


Although these two families can be studied simultaneously, for better exposition they are presented and treated separately. The paper is organized as follows. In \S\ref{sec_prelim}, some well-known preliminaries about weighted blow-ups and embedded $\Q$-resolutions are presented. After recalling the step zero in Artal's resolution in \S\ref{sec_preparations_SIS}, the full construction of the embedded $\Q$-resolution for SIS is given in \S\ref{complete_description_SIS} so as to prove the main theorem for this family. In \S\ref{sec_char_poly_SIS}, the Euler characteristic of the strata needed for applying A'Campo's formula is calculated and the characteristic polynomial and the Milnor number are obtained as an application. Finally, \S\ref{sec_preparations_YS}, \S\ref{complete_description_YS}, \S\ref{sec_char_poly_YS} are the analogous of \S\ref{sec_preparations_SIS}, \S\ref{complete_description_SIS}, \S\ref{sec_char_poly_SIS} for YLS showing the corresponding results mentioned above.


\begin{center}
\begin{large}
\textbf{Acknowledgments}
\end{large}
\end{center}

This is part of my PhD thesis. I am deeply grateful to my advisors Enrique Artal and José Ignacio Cogolludo for supporting me continuously with their fruitful conversations and ideas.

\section{Preliminaries}\label{sec_prelim}

Let us sketch some definitions and properties about $V$-manifolds, weighted projective spaces, and weighted blow-ups, see \cite{AMO11a, AMO11b, Martin11PhD} for a more detailed exposition. Also, the generalized A'Campo's formula for embedded $\Q$-resolutions is recalled, see \cite{Martin11}.

\subsection{Embedded Q-resolutions and weighted blow-ups}
Classically an embedded resolution of $\{f=0\} \subset \C^{n+1}$ is a proper analytic map $\pi: X \to (\C^{n+1},0)$ from a smooth variety $X$ satisfying, among other conditions, that $\pi^{*}(\{f=0\})$ is a normal crossing divisor. To weaken the condition on the preimage of the singularity one studies the following notion.

\begin{defi}
Let $H=\{f=0\}\subset \C^{n+1}$. An {\em embedded $\Q$-resolution} of $(H,0) \subset (\C^{n+1},0)$ is a proper analytic map $\pi: X \to (\C^{n+1},0)$ such that:
\begin{enumerate}
\item $X$ is a $V$-manifold with abelian quotient singularities.
\item $\pi$ is an isomorphism over $X\setminus \pi^{-1}(\Sing(H))$.
\item $\pi^{*}(H)$ is a hypersurface with $\mathbb{Q}$-normal crossings on $X$.
\end{enumerate}
\end{defi}

To deal with these resolutions, some notation needs to be introduced. Let $G := \mu_{d_0} \times \cdots \times \mu_{d_r}$ be an arbitrary finite abelian group written as a product of finite cyclic groups, that is, $\mu_{d_i}$ is the cyclic group of $d_i$-th roots of unity. Consider a matrix of weight vectors
$$
A := (a_{ij})_{i,j} = [{\bf a}_0 \, | \, \cdots \, | \, {\bf a}_n ] \in Mat ((r+1) \times (n+1), \Z)
$$
and the action
\begin{equation*}
\begin{array}{c}
( \mu_{d_0} \times \cdots \times \mu_{d_r} ) \times \C^{n+1}  \longrightarrow  \C^{n+1}, \\[0.15cm]
\big( \bxi_{\bd} , {\bf x} \big)  \mapsto  (\xi_{d_0}^{a_{00}} \cdots \xi_{d_r}^{a_{r0}}\, x_0,\, \ldots\, , \xi_{d_0}^{a_{0n}} \cdots \xi_{d_r}^{a_{rn}}\, x_n ).
\end{array}
\end{equation*}
The set of all orbits $\C^{n+1} / G$ is called ({\em cyclic}) {\em quotient space of type $({\bf d};A)$} and it is denoted by
$$
  X({\bf d}; A) := X \left( \begin{array}{c|ccc} d_0 & a_{00} & \cdots & a_{0n}\\ \vdots & \vdots & \ddots & \vdots \\ d_r & a_{r0} & \cdots & a_{rn} \end{array} \right).
$$

The orbit of an element $(x_0,\ldots,x_n)$ under this action is denoted by $[(x_0,\ldots,x_n)]$. Condition 3 of the previous definition means the total transform $\pi^{-1}(H) = (f\circ \pi)^{-1}(0)$ is locally given by a function of the form $x_0^{m_0} \cdots x_k^{m_k} : X({\bf d};A) \rightarrow \C$, see~\cite{Steenbrink77}. The previous numbers $m_{i}$'s have no intrinsic meaning unless $\mu_{{\bf d}}$ induces a small action on $GL(n+1,\C)$. This motivates the following.

\begin{defi}\label{def_normalized_XdA_intro}
The type $({\bf d}; A)$ is said to be {\em normalized} if the action is free on $(\C^{*})^{n+1}$ and $\mu_{\bf d}$ is identified with a small subgroup of $GL(n+1,\C)$.
\end{defi}

As a tool for finding embedded $\Q$-resolutions one uses weighted blow-ups with smooth center. Special attention is paid to the case of dimension 2 and 3 and blow-ups at points.

\begin{ex}\label{blowup_dim2}
Assume $(d;a,b)$ is normalized and $\gcd (\w) =1$, $\w := (p,q)$. Then, the total space of the $\w$-blow-up at the origin of $X(d;a,b)$,
\begin{equation}\label{w-blow-up_intro}
\pi_{(d;a,b),\w}: \widehat{X(d;a,b)}_{\w} \longrightarrow X(d;a,b),
\end{equation}
can be written as
$$
\widehat{U}_1 \cup \widehat{U}_2 = X \left( \frac{pd}{e}; 1, \frac{-q+\beta p b}{e} \right) \cup X \left( \frac{qd}{e}; \frac{-p+\mu qa}{e}, 1 \right)
$$
and the charts are given by
\begin{equation*}
\begin{array}{c|c}
\text{First chart} & X \left( \displaystyle\frac{pd}{e}; 1, \frac{-q+\beta p b}{e} \right)  \ \longrightarrow \ \widehat{U}_1, \\[0.5cm] & \,\big[ (x^e,y) \big] \mapsto \big[ ((x^p,x^q y),[1:y]_{\w}) \big]_{(d;a,b)}. \\ \multicolumn{2}{c}{} \\
\text{Second chart} & X \left( \displaystyle\frac{qd}{e}; \frac{-p+\mu qa}{e}, 1 \right) \ \longrightarrow \ \widehat{U}_2, \\[0.5cm] & \hspace{0.15cm} \big[ (x,y^e) \big] \mapsto \big[ ((x y^p, y^q),[x:1]_{\w}) \big]_{(d;a,b)}.
\end{array}
\end{equation*}
Above, $e=\gcd(d,pb-qa)$ and $\beta a \equiv \mu b \equiv 1$ $(\text{mod $d$})$. Observe that the origins of the two charts are cyclic quotient singularities; they are located at the exceptional divisor $E$ which is isomorphic to $\P^1_{\w} \cong \P^1$.
\end{ex}

\begin{ex}\label{blowup_dim3_smooth}
Let $\pi_{\w}: \widehat{\C}^3_{\w} \to \C^3$ be the $\w$-weighted blow-up at the origin with $\w=(p,q,r)$, $\gcd(\w)=1$. The new space is covered by three open sets 
$$
\widehat{\C}^3_{\w} = U_1 \cup U_2 \cup U_3 = X(p;-1,q,r) \cup X(q;p,-1,r) \cup X(r;p,q,-1),
$$
and the charts are given by
\begin{equation}\label{charts_dim3}
\begin{array}{cc}
X(p;-1,q,r) \longrightarrow U_1: & [(x,y,z)] \mapsto ((x^p, x^q y, x^r z),[1:y:z]_{\w}), \\[0.25cm]
X(q;p,-1,r) \longrightarrow U_2: & [(x,y,z)] \mapsto ((x y^p,y^q,y^r z),[x:1:z]_{\w}), \\[0.25cm]
X(r;p,q,-1) \longrightarrow U_3: & [(x,y,z)] \mapsto ((x z^p, y z^q, z^r),[x:y:1]_{\w}).
\end{array}
\end{equation}

In general $\widehat{\C}^3_{\w}$ has three lines of (cyclic quotient) singular points located at the three axes of the exceptional divisor $\pi^{-1}_{\w}(0) \simeq \P^2_{\w}$. For instance, a generic point in $x=0$ is a cyclic point of type $\C\times X(\gcd(q,r);p,-1)$.
Note that although the quotient spaces are represented by normalized types, the exceptional divisor can still be simplified:
\begin{equation}\label{propPw}
\begin{array}{rcl}
\P^2(p,q,r) & \longrightarrow & \P^2 \displaystyle\left(\frac{p}{(p,r)\cdot
(p,q)},\frac{q}{(q,p)\cdot (q,r)},
\frac{r}{(r,p)\cdot (r,q)}\right),\\[0.5cm]
\displaystyle \,[x:y:z] & \mapsto & [x^{\gcd(q,r)}:y^{\gcd(p,r)}:z^{\gcd(p,q)}].
\end{array} 
\end{equation}

However, this simplification may be not useful when working with the whole ambient space because its charts are not compatible with $\widehat{\C}^3_{\w}$. Thus the natural covering of the exceptional divisor is
$$
  \P^2_{\w} = V_1 \cup V_2 \cup V_3 = X(p;q,r) \cup X(q;p,r) \cup X(r;p,q),
$$
and the charts are given by the restrictions of the maps in~\eqref{charts_dim3} to $x=0$, $y=0$, and $z=0$ respectively.
\end{ex}

\begin{ex}\label{blowup_dim3_singular}
Assume $(d;a,b,c)$ is normalized and $\gcd (\w) =1$, $\w := (p,q,r)$. Then, the total space of the $\w$-blow-up at the origin of $X(d;a,b,c)$,
$$
\pi = \pi_{(d;a,b,c),\w}:\, \widehat{X(d;a,b,c)}_{\w} \longrightarrow X(d;a,b,c)
$$
can be covered by three open sets as
$$
\widehat{X(d;a,b,c)}_{\w} = \frac{\widehat{\C}^3_{\w}}{\mu_d} = \frac{U_1 \cup U_2 \cup U_3}{\mu_d} = \widehat{U}_1 \cup \widehat{U}_2 \cup \widehat{U}_3,
$$
where

$$
\begin{array}{ccc}
\displaystyle \widehat{U}_1 = \frac{U_1}{\mu_d} = \frac{X(p;-1,q,r)}{\mu_d} =
X \left(\begin{array}{c|ccc} p & -1 & q & r \\ pd & a & pb-qa & pc-ra \end{array}\right), \\[0.75cm]
\displaystyle \widehat{U}_2 = \frac{U_2}{\mu_d} = \frac{X(q;p,-1,r)}{\mu_d} =
X \left(\begin{array}{c|ccc} q & p & -1 & r \\ qd & qa-pb & b & qc-rb \end{array}\right), \\[0.75cm]
\displaystyle \widehat{U}_3 = \frac{U_3}{\mu_d} = \frac{X(r;p,q,-1)}{\mu_d} =
X \left(\begin{array}{c|ccc} r & p & q & -1 \\ rd & ra-pc & rb-qc & c \end{array}\right). 
\end{array}
$$

The charts are given by the induced maps on the corresponding quotient spaces, see Equation~(\ref{charts_dim3}). The exceptional divisor $E = \pi^{-1}_{(d;a,b,c),\w}(0)$ is identified with the quotient
$$
\P^2_{\w}(d;a,b,c) := \frac{\P^2_{\w}}{\mu_d}.
$$
There are three lines of quotient singular points in $E$ and outside $E$ the map $\pi_{(d;a,b,c),\w}$ is an isomorphism.


The expression of the quotient spaces can be modified as follows. Let $\alpha$ and $\beta$ be two integers such that $\alpha d + \beta a = \gcd(d,a)$, then one has that the space $X\left(\begin{smallmatrix} p ; & -1 & q & r \\ pd ; & a & pb-qa & pc-ar \end{smallmatrix} \right)$ equals
$$
X\left(\begin{array}{c|ccc}
pd & (d,a) & -q (d,a) + \beta pb & -r (d,a) + \beta pc \\
(d,a) & 0 & b & c
\end{array}\right).
$$
Note that in general the previous space is not represented by a normalized type. To obtain its normalized one, follow the processes described in (I.1.3) and (I.1.9) of \cite{Martin11PhD}.
\end{ex}

\subsection{Intersection theory on V-manifolds}

The notion of Cartier and Weil $\mathbb{Q}$-divisors coincide on $V$-manifolds and thus a rational intersection theory can be developed for $\mathbb{Q}$-Weil divisors using the theory of line bundles. This intersection multiplicity was first introduced by Mumford for normal surfaces, see~\cite{Mumford61}. Recently in~\cite{AMO11b} explicit formulas for weighted blow-ups and weighted projective planes was calculated.

\begin{prop}\label{formula_self-intersection}
Let $\pi: \widehat{X} \to X$ be the $(p,q)$-blow-up at a point of type $(d;a,b)$ as in~\eqref{w-blow-up_intro}. Consider two $\mathbb{Q}$-divisors $C$ and $D$ on $X(d;a,b)$. Then,
$$
\begin{array}{lcl}
\displaystyle {\rm (1)} \ E \cdot \pi^{*}(C) = 0, & \quad & \displaystyle {\rm (4)} \ E^2 = - \, \frac{e^2}{dpq}, \\
\displaystyle {\rm (2)} \ \pi^{*}(C) = \widehat{C} + \frac{\nu}{e} E, && \displaystyle {\rm (5)} \ \widehat{C} \cdot \widehat{D} = C \cdot D - \frac{\nu \mu}{dpq}, \\
\displaystyle {\rm (3)} \ E \cdot \widehat{C} = \frac{e \nu}{d p q}, && \displaystyle {\rm (6)} \ \widehat{D}^2 = D^2 - \frac{\mu^2}{dpq} \quad \text{($D$ compact)},
\end{array}
$$
where $\nu$ and $\mu$ denote the $(p,q)$-multiplicities of $C$ and $D$ at $P$, i.e.~$x$ (resp.~$y$) has $(p,q)$-multiplicity $p$ (resp.~$q$).
\end{prop}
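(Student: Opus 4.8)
\emph{Proof proposal.}
The plan is to dispose of (1) and (2) first, to isolate the one genuinely geometric computation in (4), and then to obtain (3), (5) and (6) formally from the projection formula together with the expansion furnished by (2). Throughout I would work with the rational intersection theory on the $V$-manifold $\widehat{X}$ recalled above, for which the pullback and the projection formula are available.

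First I would prove (1). As $\pi$ is proper and birational and contracts $E$ to the single point $P$, one has $\pi_{*}(E)=0$ as a cycle, so the projection formula gives $E\cdot\pi^{*}(C)=\pi_{*}(E)\cdot C=0$. For (2) I would define $\widehat{C}:=\pi^{*}(C)-\lambda E$, where $\lambda$ is the unique rational number for which $\widehat{C}$ carries no $E$-component, i.e.\ the strict transform, and then identify $\lambda$ by a local computation in the two charts of Example~\ref{blowup_dim2}. Substituting the parametrizations $[(x^{e},y)]\mapsto[((x^{p},x^{q}y),[1:y]_{\w})]$ and its companion into a local equation of $C$ at $P$, the weighted-homogeneous initial form of $(p,q)$-degree $\nu$ governs the order of contact of the total transform with $E$; after accounting for the $e$-th power built into the normalized chart coordinate (here $e=\gcd(d,pb-qa)$), this order is exactly $\nu/e$, which is (2). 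As a sanity check, in the smooth case $d=e=1$ one recovers the classical coefficient $\nu$.

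The heart of the matter is (4), where the projection formula no longer suffices. Here I would compute $E^{2}$ from the local orbifold structure of $E\cong\P^{1}_{\w}$. Concretely, I would take a convenient test divisor through $P$, say a coordinate axis, and evaluate the \emph{local} intersection number $E\cdot\widehat{C}$ at the point where the strict transform meets $E$, reading the contribution off the cyclic quotient type of the corresponding chart in Example~\ref{blowup_dim2}. Combining this value with the identity $0=E\cdot\pi^{*}(C)=E\cdot\widehat{C}+\tfrac{\nu}{e}\,E^{2}$ coming from (1) and (2) then solves for $E^{2}$; the bookkeeping of the two quotient points lying on $E$ is what produces the denominator $dpq$ and the numerator $e^{2}$. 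I expect this step — correctly tracking $e=\gcd(d,pb-qa)$ and the two singularity types at the chart origins — to be the main obstacle, as everything else is formal.

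With (4) in hand the rest is immediate. For (3), the identity $0=E\cdot\widehat{C}+\tfrac{\nu}{e}E^{2}$ together with $E^{2}=-\tfrac{e^{2}}{dpq}$ yields $E\cdot\widehat{C}=\tfrac{e\nu}{dpq}$. For (5) I would apply the projection formula once more in the form $\pi^{*}(C)\cdot\pi^{*}(D)=C\cdot D$, expand both pullbacks using (2), and substitute (3) and (4):
\[
C\cdot D=\Big(\widehat{C}+\tfrac{\nu}{e}E\Big)\cdot\Big(\widehat{D}+\tfrac{\mu}{e}E\Big)
=\widehat{C}\cdot\widehat{D}+\tfrac{\nu\mu}{dpq},
\]
the three correction terms $\tfrac{\nu\mu}{dpq}+\tfrac{\nu\mu}{dpq}-\tfrac{\nu\mu}{dpq}$ collapsing to a single $\tfrac{\nu\mu}{dpq}$; rearranging gives (5). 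Finally, (6) is the special case $C=D$ of (5) for a compact divisor $D$, for which $D^{2}$ and $\widehat{D}^{2}$ are defined, producing $\widehat{D}^{2}=D^{2}-\tfrac{\mu^{2}}{dpq}$.
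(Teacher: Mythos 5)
Your proposal is correct, but there is nothing in this paper to compare it against: Proposition~\ref{formula_self-intersection} is stated as a preliminary without proof, the formulas being attributed to the reference \cite{AMO11b}, where they are established by essentially the route you describe. Your steps all check out: (1) follows from the projection formula and $\pi_{*}(E)=0$; (2) from the order of vanishing $\nu/e$ read off the charts of Example~\ref{blowup_dim2}; for (4), taking $C=\{x=0\}$ gives $\nu=p$, the strict transform meets $E$ only at the origin of the second chart where Remark~\ref{computation_local_number_V-surface} gives the local number $E\cdot\widehat{C}=e/(qd)$, and then $0=E\cdot\widehat{C}+\tfrac{p}{e}E^{2}$ forces $E^{2}=-e^{2}/(dpq)$; finally (3), (5), (6) follow formally exactly as you say, with the three correction terms in (5) collapsing to a single $\nu\mu/(dpq)$.
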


\begin{prop}\label{bezout_th_P2w-mu_d}
Let us denote by $m_1$, $m_2$, $m_3$ the determinants of the three minors of order $2$ of the matrix $\big( \begin{smallmatrix} p & q & r \\ a & b & c \end{smallmatrix} \big)$. Assume $\gcd(p,q,r) = 1$ and denote $e = \gcd ( d, m_1, m_2, m_3 )$. Consider $\P^2_\w$ the weighted projective plane with $\w=(p,q,r)$. Then, the intersection number of two $\mathbb{Q}$-divisors on the quotient $\P^2_{\w}(d;a,b,c) := \P^2_{\w} / \mu_d$ is $D_1 \cdot D_2 = \frac{e}{dpqr} \deg_{\w}(D_1) \deg_{\w}(D_2)$. Moreover, if $|D_1| \nsubseteq |D_2|$, then $D_1 \cdot D_2 = \sum_{P \in |D_1|\cap |D_2|} (D_1 \cdot D_2)_P$ .
\end{prop}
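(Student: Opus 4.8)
The plan is to deduce the formula from the Bézout theorem on the weighted projective plane $\mathbb{P}^2_\mathbf{w}$ itself, and then to descend through the finite quotient map $\phi: \mathbb{P}^2_\mathbf{w} \to \mathbb{P}^2_\mathbf{w}(d;a,b,c) = \mathbb{P}^2_\mathbf{w}/\mu_d$, controlling the extra factor by identifying $e$ with the order of the subgroup of $\mu_d$ acting trivially.

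First I would settle the case $d=1$ (so $e=1$), establishing $D_1 \cdot D_2 = \frac{1}{pqr}\deg_\mathbf{w}(D_1)\deg_\mathbf{w}(D_2)$ on $\mathbb{P}^2_\mathbf{w}$. Since $\Cl(\mathbb{P}^2_\mathbf{w})\otimes\mathbb{Q}$ is one-dimensional and $\deg_\mathbf{w}$ identifies it with $\mathbb{Q}$, the intersection pairing is a symmetric bilinear form on a rank-one space and hence has the shape $D_1\cdot D_2 = \kappa\,\deg_\mathbf{w}(D_1)\deg_\mathbf{w}(D_2)$ for a single constant $\kappa$. I would pin down $\kappa = \frac{1}{pqr}$ by evaluating on the two coordinate axes $\{x=0\}$ and $\{y=0\}$, which meet only at $[0:0:1]$, a cyclic point of type $X(r;p,q)$: pulling back to the $\mu_r$-cover where the two axes meet transversally gives local intersection $\frac{1}{r}$, while $\deg_\mathbf{w}\{x=0\}\cdot\deg_\mathbf{w}\{y=0\} = pq$, forcing $\kappa=\frac{1}{pqr}$.

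Next I would run the argument downstairs. The projection formula for a finite surjection of normal surfaces of generic degree $N$ gives $(\phi^*D_1 \cdot \phi^*D_2) = N\,(D_1\cdot D_2)$, whence $D_1\cdot D_2 = \frac{1}{N}(\phi^*D_1\cdot\phi^*D_2) = \frac{1}{N\,pqr}\deg_\mathbf{w}(\phi^*D_1)\deg_\mathbf{w}(\phi^*D_2)$ by the base case. Because $\deg_\mathbf{w}$ on the quotient is read off from $\mu_d$-invariant defining equations, one has $\deg_\mathbf{w}\circ\phi^* = \deg_\mathbf{w}$, and it only remains to show $N = d/e$. This is where the minors enter: for a generator $\zeta$ of $\mu_d$, the element $\zeta^t$ acts trivially on $\mathbb{P}^2_\mathbf{w}$ iff its action agrees with some $\mathbb{C}^*$-scaling, which — solving for the scaling parameter, using $\gcd(p,q,r)=1$ — happens exactly when $d \mid t\,m_i$ for $i=1,2,3$. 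The resulting kernel has order $\gcd(d,m_1,m_2,m_3)=e$, by the elementary identity $\mathrm{lcm}_i\!\big(d/\gcd(d,m_i)\big) = d/\gcd_i\gcd(d,m_i)$. Hence the effective generic degree of $\phi$ is $N=d/e$, and $D_1\cdot D_2 = \frac{e}{dpqr}\deg_\mathbf{w}(D_1)\deg_\mathbf{w}(D_2)$.

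Finally, the local decomposition when $|D_1|\nsubseteq|D_2|$ is the standard feature of Mumford's rational intersection theory on normal surfaces recalled above: having no common component, $D_1$ and $D_2$ meet in finitely many points and the global number is by definition the sum of the local multiplicities $(D_1\cdot D_2)_P$ computed in the local rings (or on a resolution). The main obstacle I anticipate is the number-theoretic step identifying $e$ with the order of the ineffectivity kernel — equivalently $\deg\phi = d/e$ — since this is precisely where $\gcd(p,q,r)=1$ and the three $2\times 2$ minors are used; the compatibility $\deg_\mathbf{w}\circ\phi^* = \deg_\mathbf{w}$ is a secondary point that must be checked against the chosen normalization of $\deg_\mathbf{w}$ on the quotient.
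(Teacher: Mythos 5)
The paper never proves this proposition: it is recalled verbatim in the preliminaries as a result of \cite{AMO11b}, so there is no internal proof to compare yours against; I can only judge the argument on its own merits, and it is correct. The three substantive steps all check out. (i) The normalization on $\P^2_\w$: the axes $\{x=0\}$ and $\{y=0\}$ meet only at $[0:0:1]$, which lies in the chart $X(r;p,q)$, so the local pull-back formula (Remark~\ref{computation_local_number_V-surface}) gives intersection $1/r$, forcing $\kappa=\frac{1}{pqr}$ on the rank-one space $\Cl(\P^2_\w)\otimes\Q$. (ii) The kernel computation: $\zeta^t$ acts trivially on $\P^2_\w$ iff some $\lambda\in\C^*$ satisfies $\lambda^p=\zeta^{ta}$, $\lambda^q=\zeta^{tb}$, $\lambda^r=\zeta^{tc}$; eliminating $\lambda$ pairwise gives $d\mid t\,m_i$ for $i=1,2,3$, and conversely, writing $\alpha p+\beta q+\gamma r=1$ (this is exactly where $\gcd(p,q,r)=1$ enters) one checks $\lambda=\zeta^{t(\alpha a+\beta b+\gamma c)}$ works, since e.g. $p(\alpha a+\beta b+\gamma c)-a=\beta(pb-qa)+\gamma(pc-ra)$. (iii) The counting identity $\lcm_i\bigl(d/\gcd(d,m_i)\bigr)=d/\gcd(d,m_1,m_2,m_3)$ holds prime by prime, so the ineffectivity kernel has order $e$ and $\deg\phi=d/e$; combined with the projection formula for Mumford's rational intersection theory under finite surjections (a standard fact you correctly isolate rather than prove), this yields $\frac{e}{dpqr}$. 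Your caveat about the degree convention is the right one to flag: the formula holds precisely with the normalization $\deg_\w\circ\phi^*=\deg_\w$, i.e. the $\w$-degree of a divisor on the quotient is read off from its $\mu_d$-invariant equation; this is the convention the paper uses implicitly, e.g. in the proofs of Lemmas~\ref{step1} and~\ref{step2}, where curves of $\w$-degree $\nu$ on $E_a=\P^2_\w(d;a,b,c)$ are assigned self-intersection $\frac{e\nu^2}{dpqr}$. The "Moreover" clause is, as you say, just the local-to-global structure of Mumford's intersection number for divisors with no common component. Altogether the proof is sound, and its route --- Bézout upstairs plus descent through the quotient map of effective degree $d/e$ --- is the natural one and very much in the spirit of the cited source.
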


\begin{remark}\label{computation_local_number_V-surface}
To calculate $(D_1 \cdot D_2)_{[(0,0)]}$ the intersection multiplicity of two $\mathbb{Q}$-divisors on $X(d;a,b)$, $\gcd(d,a,b)=1$, consider $\pr: \C^2 \to X(d;a,b)$ and apply the classical local pull-back formula. Denote by $\widetilde{D}_i$ the pull-back divisor of $D_i$ under the projection. Then, $(D_1 \cdot D_2)_{[(0,0)]} = \frac{1}{d} ( \widetilde{D}_1 \cdot \widetilde{D}_2 )_{(0,0)}$.
\end{remark}

\vspace{0.15cm}

Note that the exceptional divisor of the $(p,q,r)$-weighted blow-up at a point of type $(d;a,b,c)$ is naturally isomorphic to $\P^2_{\w} (d;a,b,c)$. Hence this result will help us describe embedded $\Q$-resolutions for YLS.

\subsection{A'Campo's formula for embedded Q-resolutions}

Let $f: (\C^{n+1},0) \to (\C,0)$ be a non-constant analytic function germ defining an isolated singularity and let $H=\{f=0\}$. Given $\pi: X \to (\C^{n+1},0)$ an embedded ${\bf Q}$-resolution of $(H,0)$, consider $E_1, \ldots, E_s$ the irreducible components of the exceptional divisor and $\widehat{H}$ the strict transform.

One writes $E_0 = \widehat{H}$ and $S=\{0,1,\ldots,s\}$ so that the stratification of $X$ associated with the $\mathbb{Q}$-normal crossing divisor $\pi^{-1}(H) = \bigcup_{i \in S} E_i$ is defined by setting
\begin{equation*}
  E_{I}^\circ := \Big( \cap_{i \in I} E_i \Big) \setminus \Big( \cup_{i\notin I} E_i \Big),
\end{equation*}
for a given possibly empty set $I\subseteq S$.

Let $X = \bigsqcup_{j\in J} Q_j$ be a finite stratification on $X$ given by its quotient singularities such that the local equation of $g := f\circ \pi\,$ at $P \in E_I^{\circ} \cap Q_j$ is of the form
$$
x_0^{m_0} \cdot \ldots \cdot x_k^{m_k}:\, X(\bd; A) := \C^{n+1}/\mu_{\bd} \longrightarrow \C, \quad (0 \leq k \leq n)
$$
and the multiplicities $m_i$'s and the action $\mu_{\bd}$ are the same along each stratum~$E_I^{\circ} \cap Q_j$, i.e.~they do not depend on the chosen point $P \in E^{\circ}_{I} \cap Q_j$.

\begin{defi}\label{def_mult_1}
Using the previous notation the \emph{multiplicity} of $E^{\circ}_{\{i\}} \cap Q_j$ is defined as
\begin{equation*}
m(E^{\circ}_{\{i\}} \cap Q_j) =  \frac{m}{L} \ \in \ \N,
\end{equation*}
where $L = \lcm \left( \frac{d_0}{\gcd(d_0,a_{00})},\ldots,\frac{d_r}{\gcd(d_r,a_{r0})} \right)$ and $x_0^m : X(\bd;A) \to \C$ is the equation of the exceptional divisor at any point $P \in E^{\circ}_{\{i\}} \cap Q_j$.
\end{defi}

Let us denote $\check{E}_{i,j} := E_{\{i\}}^{\circ} \cap Q_j$ and $m_{i,j} := m(\check{E}_{i,j})$. The following result is nothing but the generalization of A'Campo's formula in this setting~\cite{Martin11}.

\begin{theo}\label{ATH2}
The characteristic polynomial of the complex monodromy of $(H,0) \subset (\C^{n+1},0)$ is ($i=1,\ldots,s$,\, $j\in J$)
$$
  \Delta(t) = \left[\frac{1}{t-1} \prod_{i, j} \left(t^{m_{i,j}}-1
  \right)^{\chi( \check{E}_{i,j} )}\right]^{(-1)^n}
$$
and thus the Milnor number is $\displaystyle\mu = (-1)^n
\Big[-1+\sum_{i, j} m_{i,j} \cdot \chi( \check{E}_{i,j}) \Big]$. 
\end{theo}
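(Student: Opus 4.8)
The plan is to deduce the formula from the classical A'Campo formula by working locally over the smooth cyclic covers of the $V$-manifold $X$, so that the quotient structure enters only through the bookkeeping of the multiplicities. First I would pass from the characteristic polynomial to the monodromy zeta function $\zeta(t) = \prod_{q}\det(\mathrm{Id} - t\, h^{*}\mid H^{q}(F))^{(-1)^{q+1}}$ of the Milnor fibration of $f$ at $0$. Since $f$ defines an isolated singularity, only $H^{0}(F)$ and $H^{n}(F)$ survive, so $\zeta$ determines $\Delta$ up to the factor $(t-1)^{-1}$ coming from the constant $H^{0}$-part and the global exponent $(-1)^{n}$ governing the parity of the middle degree. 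The problem is thereby reduced to computing $\zeta(t)$ from the resolution data on $X$.

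For $\zeta$ I would use the stratification $\pi^{-1}(H) = \bigsqcup_{i,j}\check{E}_{i,j}$ together with $\widehat{H}$, with respect to which the nearby-cycle complex of $g = f\circ\pi$ is constructible; this is guaranteed by the standing hypothesis that the multiplicities $m_{i}$ and the action $\mu_{\bd}$ are constant along each stratum $E^{\circ}_{I}\cap Q_{j}$. Restricting the Milnor fibration over each stratum, multiplicativity of the Euler characteristic in the resulting fibration — the orbifold analogue of A'Campo's original argument — yields $\zeta(t) = \prod_{i,j}\zeta_{i,j}(t)^{\chi(\check{E}_{i,j})}$, where $\zeta_{i,j}(t)$ is the zeta function of the transverse one-variable local Milnor fibration of $g$ at a generic point of $\check{E}_{i,j}$. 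The exponents are the Euler characteristics of the quotient strata $\check{E}_{i,j}$ computed downstairs on $X$, which is automatic since both the stratification and the fibration are defined on $X$ itself.

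The heart of the matter is the local contribution $\zeta_{i,j}(t)$. At a generic point $P\in\check{E}_{i,j}$ only $E_{i}$ passes through $P$, so $g$ has the form $x_{0}^{m}$ on $X(\bd;A)=\C^{n+1}/\mu_{\bd}$, with $x_{0}^{m}$ the equation of the exceptional divisor as in Definition~\ref{def_mult_1}. Lifting to the smooth cover $\C^{n+1}$, the transverse Milnor fibre of $x_{0}^{m}$ consists of $m$ sheets cyclically permuted by a monodromy of order $m$; the subgroup of $\mu_{\bd}$ acting on the $x_{0}$-axis permutes these sheets through the character $\xi_{d_{0}}^{a_{00}}\cdots\xi_{d_{r}}^{a_{r0}}$, whose image in $\mu_{m}$ has order exactly $L=\lcm\left(\frac{d_{0}}{\gcd(d_{0},a_{00})},\ldots,\frac{d_{r}}{\gcd(d_{r},a_{r0})}\right)$. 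Passing to the quotient therefore folds the $m$ sheets into $m/L$ sheets and reduces the order of the induced monodromy to $m_{i,j}=m/L$, so that $\zeta_{i,j}(t)=(t^{m_{i,j}}-1)^{\pm 1}$. Establishing this identification — that the effective order of the monodromy on the quotient transverse Milnor fibre is precisely $m/L$ — is the main obstacle, since it requires tracking the $\mu_{\bd}$-action on the sheets of the cover and verifying that its order coincides with the $\lcm$ defining $L$ in Definition~\ref{def_mult_1}.

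Assembling the pieces, the product over the exceptional strata produces $\prod_{i,j}(t^{m_{i,j}}-1)^{\chi(\check{E}_{i,j})}$, the strict transform $\widehat{H}$ (along which $g$ vanishes to order one with trivial transverse monodromy) contributes the factor $(t-1)^{-1}$, and converting $\zeta$ back to the reduced characteristic polynomial introduces the global exponent $(-1)^{n}$; this gives the stated expression for $\Delta(t)$. Finally, the Milnor number formula follows at once by comparing degrees: since $f$ has an isolated singularity, $\deg\Delta(t)=\mu$, and taking degrees on both sides of the characteristic polynomial formula gives $\mu=(-1)^{n}\big[-1+\sum_{i,j}m_{i,j}\,\chi(\check{E}_{i,j})\big]$.
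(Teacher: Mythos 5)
The paper itself contains no proof of Theorem~\ref{ATH2}: it is imported verbatim from \cite{Martin11}, so your proposal can only be compared with the argument given there, and in its essentials you do reconstruct it. The reduction from $\Delta$ to the monodromy zeta function, the stratified multiplicativity $\zeta(t)=\prod_{i,j}\zeta_{i,j}(t)^{\chi(\check{E}_{i,j})}$ in A'Campo's style, and above all the local computation are the right skeleton. In particular you correctly identify the heart of the matter: the character $\xi_{d_0}^{a_{00}}\cdots\xi_{d_r}^{a_{r0}}$ has image $\mu_L$ with $L=\lcm\bigl(d_0/\gcd(d_0,a_{00}),\ldots,d_r/\gcd(d_r,a_{r0})\bigr)$; well-definedness of $x_0^{m}$ on $X(\bd;A)$ forces $L\mid m$; and the quotient folds the $m$ sheets of the transverse fiber into $m/L=m_{i,j}$ sheets carrying a cyclic monodromy of that order, which is exactly the content of Definition~\ref{def_mult_1} and the key lemma of \cite{Martin11}.

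Two points need repair. The concrete misstep is your bookkeeping of the factor $(t-1)^{-1}$: in your opening paragraph you attribute it, correctly, to the $H^{0}$-part (reduced versus unreduced cohomology) when converting $\zeta$ back to $\Delta$, but in the final assembly you instead produce it from the strict transform $\widehat{H}$. That is at odds with the theorem itself, whose product runs only over $i=1,\ldots,s$: since the singularity is isolated, the exceptional divisors exhaust $\pi^{-1}(0)$, only strata lying over the origin enter the A'Campo count, and $\widehat{H}$ meets $\pi^{-1}(0)$ only in multi-component strata $E_I^{\circ}$ with $|I|\geq 2$, whose transverse fibers have zero Euler characteristic and hence contribute trivially. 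Counting $\widehat{H}$ as a source of $(t-1)^{-1}$ in addition to the $H^{0}$-normalization would double that factor; as written your assembly is internally inconsistent even though the displayed end formula is the correct one. Second, the multiplicativity step that you invoke as ``the orbifold analogue of A'Campo's original argument'' is precisely where the substantive work of \cite{Martin11} lies: one must verify that the constancy of $(\bd;A)$ and of the $m_i$ along each $E_I^{\circ}\cap Q_j$ makes $g$ locally trivial along the strata, and then run the Lefschetz fixed-point computation for the iterates of the monodromy on the $V$-manifold; asserting that the classical argument transfers is not the same as proving it. The closing degree computation deducing $\mu$ from $\Delta$ is fine once the main formula is in place.
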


\section{Preparations for the \textbf{Q}-Resolution of SIS}\label{sec_preparations_SIS}

These singularities have been introduced by Luengo and also appear in a paper by Stevens, where the $\mu$-constant stratum is studied, see \cite{Luengo87} and \cite{Stevens89} respectively. Afterward Artal described in his PhD thesis \cite{Artal94} an embedded resolution of such singularities using blow-ups at points and rational curves.

Here an embedded $\Q$-resolution is given and particularly it is proven that only weighted blow-ups at points are needed. By contrast, the final ambient space obtained has abelian quotient singularities.

Let $(V,0)$ be a SIS in $(\C^3,0)$ defined by a holomorphic function $f:U\to \C$. As above, denote by $m$ the multiplicity of $V$, and ${\bf C}$ the tangent cone. Let $\pi_0: \widehat{U} \to U$ be the blow-up at the origin. Recall that the total transform is the divisor $\pi_0^{*}(V) = \widehat{V} + m E_0$, where $\widehat{V}$ is the strict transform of $V$, and $E_0$ is the exceptional divisor of $\pi_0$. The intersection $\widehat{V}\cap E_0$ is identified with the tangent cone of the singularity, see Figure~\ref{fig_step_zero}.

\begin{figure}[h t]
\centering
\includegraphics{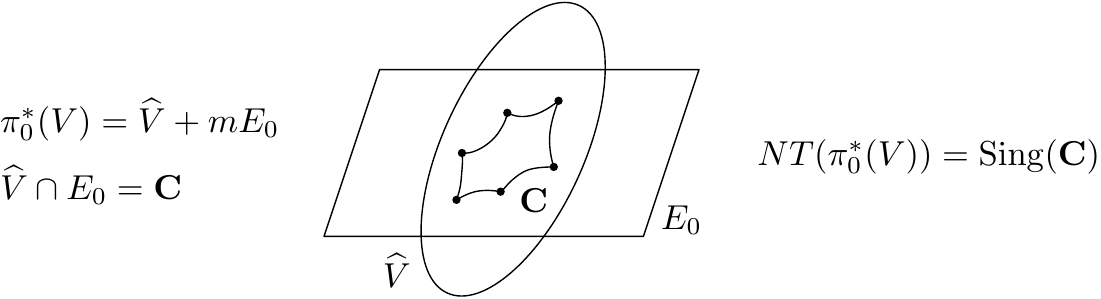}
\caption{Step 0 in the embedded ${\bf Q}$-resolution of $(V,0)$.}
\label{fig_step_zero}
\end{figure}

Let us consider $P\in \widehat{V}\cap E_0 = {\bf C}$. After linear change of coordinates we can assume that $P = ((0,0,0),[0:0:1]) \equiv [0:0:1] \in {\bf C}$. Take a chart of $\widehat{U}$ around $P$ where $z=0$ is the equation of $E_0$ and the blowing-up takes the form
$$
  (x,y,z) \stackrel{\pi_0}{\longmapsto} (x z, y z, z).
$$

Then the equation of $\widehat{V}$ is
$$
  \widehat{V} :\quad f_{m}(x,y,1) + z \Big[ f_{m+1}(x,y,1) + z f_{m+2}(x,y,1) +
  \cdots \Big] = 0.
$$

Two cases arise: if $P$ is smooth in the tangent cone, then $\widehat{V}$ is also smooth at $P$ and the intersection with $E_0$ at that point is transverse; otherwise, i.e $P\in \Sing({\bf C})$, the SIS condition $\Sing({\bf C})\cap V(f_{m+1}) = \emptyset$ implies that the previous expression in brackets is a unit in the local ring $\C \{x,y,z \}$ and, in particular, $\widehat{V}$ is still smooth. Now the order of $f_m(x,y,1)$ is greater than or equal to 2 and the intersection $\widehat{V}\cap E_0$ is not transverse at $P$.

We summarize the previous discussion in the following result, which is actually the step zero in the resolution of \cite{Artal94}.

\begin{lemma}[Step 0]\label{step_zero}
Let $P\in {\bf C}$ be a point in the tangent cone. Then $\widehat{V}$ is smooth in a neighborhood of $P$.

Moreover, the surfaces $\widehat{V}$ and $E_0$ intersect transversely at $P$ if and only if $P$ is a smooth point in ${\bf C}$. Otherwise, i.e.~$P\in \Sing({\bf C})$, there exist local analytic coordinates around $P$ such that the equations of the exceptional divisor and the strict transform are of the form
\begin{eqnarray*}
& E_0 :& z = 0\,;\\[0.1cm]
& \widehat{V} :& z+h(x,y) = 0\,,
\end{eqnarray*}
where $h(x,y)=0$ is an equation of ${\bf C}$ and its order is at least 2.
\end{lemma}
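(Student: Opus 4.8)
The plan is to work entirely in the single affine chart of $\pi_0$ already fixed before the statement, where $\pi_0(x,y,z)=(xz,yz,z)$, the exceptional divisor is $E_0=\{z=0\}$, and the coordinates are normalized so that the origin corresponds to $P=[0:0:1]\in\mathbf{C}$. First I would substitute into $f=\sum_{i\ge m}f_i$ and use homogeneity, $f_i(xz,yz,z)=z^i f_i(x,y,1)$, to factor out $z^m$; this exhibits the summand $m\,E_0$ and reads off the strict transform as
$$
\widehat{V}:\quad g(x,y,z):=f_m(x,y,1)+z\big[f_{m+1}(x,y,1)+z\,f_{m+2}(x,y,1)+\cdots\big]=0,
$$
exactly the expression displayed in the text. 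Since $P\in\mathbf{C}$ we have $f_m(0,0,1)=0$, so $g(0)=0$ and the origin lies on $\widehat{V}$.

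The second step is a differential computation at the origin. I would record
$$
dg|_0=(\partial_x f_m)(0,0,1)\,dx+(\partial_y f_m)(0,0,1)\,dy+f_{m+1}(0,0,1)\,dz,
$$
the $dz$-coefficient arising from differentiating the bracketed tail. Euler's identity for the homogeneous form $f_m$ gives $(\partial_z f_m)(0,0,1)=m\,f_m(0,0,1)=0$, so all three partials of $f_m$ at $(0,0,1)$ vanish simultaneously precisely when $P\in\Sing(\mathbf{C})$. From here both assertions are linear algebra: $\widehat{V}$ is smooth at $P$ as soon as $dg|_0\neq 0$, which holds in every case because in the smooth case the $(dx,dy)$-part is nonzero and in the singular case the SIS hypothesis forces $f_{m+1}(0,0,1)\neq 0$, so $dg|_0=f_{m+1}(0,0,1)\,dz\neq 0$. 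Moreover $\widehat{V}$ and $E_0=\{z=0\}$ meet transversely iff $dg|_0$ is not proportional to $dz$, i.e.~iff $(\partial_x f_m)(0,0,1)$ or $(\partial_y f_m)(0,0,1)$ is nonzero, i.e.~iff $P$ is a smooth point of $\mathbf{C}$.

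For the normal form in the singular case I would exploit that the SIS condition makes the bracket $u(x,y,z):=f_{m+1}(x,y,1)+z\,f_{m+2}(x,y,1)+\cdots$ a unit in $\C\{x,y,z\}$, since its value at the origin is $f_{m+1}(0,0,1)\neq 0$. Then the substitution $(x,y,z)\mapsto(x,y,w)$ with $w:=z\,u(x,y,z)$ is an analytic change of coordinates near $P$: its Jacobian is lower triangular with diagonal $(1,1,u(0))$, hence invertible. In these coordinates $g=f_m(x,y,1)+w$, and because $u$ is a unit the locus $\{w=0\}$ coincides with $\{z=0\}=E_0$. Renaming $w$ as $z$ and setting $h(x,y):=f_m(x,y,1)$ — an affine equation of $\mathbf{C}$ at $P$, of order $\geq 2$ since $P\in\Sing(\mathbf{C})$ — yields $E_0:\{z=0\}$ and $\widehat{V}:\{z+h(x,y)=0\}$, as claimed.

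I expect no serious obstacle in this argument; the only two points needing care are the bookkeeping which guarantees that the coordinate change $w=z\,u$ fixes $E_0$ — this works exactly because $u$ is a unit, so $w$ and $z$ cut out the same divisor — and the invocation of Euler's relation, which is what legitimately reduces the projective singularity condition at $P$ to the vanishing of the two affine partials $\partial_x f_m,\partial_y f_m$ at $(0,0,1)$.
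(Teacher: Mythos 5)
Your proof is correct and takes essentially the same approach as the paper: the same chart computation factoring out $z^m$ to get $\widehat{V}:\ f_m(x,y,1)+z[\cdots]=0$, the SIS condition making the bracket a unit, and the coordinate change $w=z\,u$ producing the normal form $z+h(x,y)=0$. You merely make explicit the details the paper leaves implicit, namely the Euler-identity reduction of the projective singularity condition to the two affine partials and the Jacobian check for the coordinate change.
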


\section{Construction of the Embedded \textbf{Q}-Resolution}\label{complete_description_SIS}

Now we proceed to construct the full ${\Q}$-resolution of $(V,0)$. By the preceding lemma, the set of points where $\pi^{*}_0(V)$ is not a normal crossing divisor is finite, namely $\Sing({\bf C})$. Therefore the next step in the resolution of $(V,0)$ is to blow up those points. Let us fix $P\in \Sing({\bf C})$ and consider local coordinates as in Lemma \ref{step_zero}. Even though many objects that appear in this section depend on $P$, to simplify notation, it is omitted if no confusion seems likely to arise. 

\begin{defi}
Given a divisor $D$, the set of points where $D$ is not a normal crossing divisor is called the {\em locus of non-transversality} of $D$ and it is denote by $NT(D)$.
\end{defi}

In our case, the locus of non-transversality after the blowing-up at the origin of $(V,0)$ is $NT(\pi_0^{*}(V)) = \Sing({\bf C})$.


The following result is the first step in a sequence of blow-ups. We adopt the convention to write the exceptional divisors appearing in the tangent cone in calligraphy letter, while normal letter is used for the divisors in the resolution of $(V,0)$.


Also, the objects coming from the blowing-up at $P_a \neq P$ (resp.~$P$) are indexed by the corresponding subindex~$a$ (resp.~the number $1$). Finally, recall that the strict transform of a divisor is denoted again by the same letter as the own divisor.

\begin{lemma}[Step 1]\label{step1}
Let $(p_1,q_1)\in \N^2$ be two positive coprime numbers. Let $\varpi_1$ be the weighted blow-up at $P\in {\bf C}$ with respect to~$(p_1,q_1)$. Denote by $\E_1$ its exceptional divisor and by $\nu_1$ the $(p_1,q_1)$-multiplicity of ${\bf C}$ at $P$.

Consider $\pi_1$ the $(p_1,q_1,\nu_1)$-weighted blow-up at $P$ in dimension~$3$ and $E_1$ the corresponding exceptional divisor. Then, the total transform of $\pi^{*}_0(V)$ verifies:
\begin{enumerate} 
\item $\pi_1^{*} \pi^{*}_0 (V) = \widehat{V} + m E_0 + (m+1)\nu_1 E_1$,
\item $NT(\pi_1^{*}\pi^{*}_0(V)) = NT(\varpi_1^{*}({\bf C}))$.
\end{enumerate}
\end{lemma}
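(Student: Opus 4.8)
The plan is to treat the two assertions separately: I obtain (1) by a direct computation of weighted multiplicities, and (2) by identifying the entire geometry over $E_1$ with the two-dimensional resolution of $\mathbf{C}$ carried out by $\varpi_1$.

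For (1), I would work in the local coordinates of Lemma~\ref{step_zero}, so that $E_0=\{z=0\}$ and $\widehat V=\{z+h(x,y)=0\}$ with $h=0$ a local equation of $\mathbf{C}$. Since $P$ is a smooth point of $\widehat U$, the $\w$-blow-up $\pi_1$ with $\w=(p_1,q_1,\nu_1)$ is a blow-up at a smooth point, so the pull-back of a divisor is its strict transform plus the $\w$-order of a defining equation times $E_1$ (no quotient correction appears, unlike the situation of Proposition~\ref{formula_self-intersection}). The $\w$-order of $z$ is $\nu_1$, giving $\pi_1^{*}(E_0)=E_0+\nu_1 E_1$. By definition of $\nu_1$ the $(p_1,q_1)$-order of $h$ is $\nu_1$, hence the $\w$-initial form of $z+h$ is $z+h_{\nu_1}(x,y)$, a nonzero weighted-homogeneous polynomial of degree $\nu_1$; therefore $\pi_1^{*}(\widehat V)=\widehat V+\nu_1 E_1$. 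Adding these, $\pi_1^{*}(\widehat V+m E_0)=\widehat V+m E_0+(m+1)\nu_1 E_1$, which is (1).

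For (2), first note that a single smooth divisor is automatically $\Q$-normal crossing and that $\pi_1$ is an isomorphism off $E_1$; hence $NT(\pi_1^{*}\pi_0^{*}(V))$ is contained in the union of the pairwise intersections of $\widehat V,E_0,E_1$. The key structural remark is that the coordinate hyperplane $\{z=0\}=E_0$ is invariant under the weighted action, so $\pi_1$ restricts on (the strict transform) $E_0$ to the $(p_1,q_1)$-blow-up $\varpi_1$ of the plane $\{z=0\}\cong\C^2$; under this identification $E_1\cap E_0=\E_1$ and $\widehat V\cap E_0=\mathbf{C}$. Thus on the surface $E_0$ the pair of curves $(\mathbf{C},\E_1)$ is exactly the configuration produced by $\varpi_1$, and $NT(\varpi_1^{*}(\mathbf{C}))$ is read off from it.

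It then remains to compare the normal-crossing condition point by point, which I would do on the charts of Examples~\ref{blowup_dim3_smooth} and~\ref{blowup_dim3_singular}. In the third chart $(x,y,z)\mapsto(xz^{p_1},yz^{q_1},z^{\nu_1})$ the divisor $E_0$ disappears and $\widehat V$ becomes $1+h_{\nu_1}(x,y)+(\text{terms divisible by }z)=0$; here the essential computation is that $\widehat V$ is smooth and transverse to $E_1=\{z=0\}$. A failure would force $\partial_x h_{\nu_1}=\partial_y h_{\nu_1}=0$ together with $1+h_{\nu_1}=0$, which is impossible by Euler's identity $p_1 x\,\partial_x h_{\nu_1}+q_1 y\,\partial_y h_{\nu_1}=\nu_1 h_{\nu_1}$ (it would give $h_{\nu_1}=0=-1$). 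Hence no non-transversality occurs in the interior $E_1\setminus\E_1$, and the whole $NT$ locus lies on $E_0$. On the remaining charts, e.g.~$(x,y,z)\mapsto(x^{p_1},x^{q_1}y,x^{\nu_1}z)$, the three surfaces read $E_1=\{x=0\}$, $E_0=\{z=0\}$, $\widehat V=\{z+h_{\nu_1}(1,y)+O(x)=0\}$, and one checks that they are mutually $\Q$-normal crossing at a point exactly when $\mathbf{C}$ and $\E_1$ are transverse there (the triple points), respectively when $\mathbf{C}$ is smooth there (the locus $\widehat V\cap E_0$); the quotient types of these charts restrict on $\{z=0\}$ to those of Example~\ref{blowup_dim2}, so the $V$-manifold structure is matched as well. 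Under the identification of $E_0$ with the surface blown up by $\varpi_1$ these conditions are literally $NT(\varpi_1^{*}(\mathbf{C}))$, proving (2).

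The step I expect to be the main obstacle is precisely this last comparison: controlling the strict transform $\widehat V$ near $E_1$ and verifying that it never creates non-transversality away from $E_0$. The Euler-identity argument above is what makes it work, and it is the analytic shadow of the superisolated hypothesis that forced the bracketed factor in Lemma~\ref{step_zero} to be a unit; keeping careful track of the quotient ($V$-manifold) structure across the three charts is the remaining bookkeeping.
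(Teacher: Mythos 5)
Your proposal is correct, and part (1) is essentially the paper's own argument: the multiplicity $(m+1)\nu_1$ arises as the sum $m\nu_1+\nu_1$ of the $(p_1,q_1,\nu_1)$-orders of the components of $\pi_0^*(V)$ through $P$, with no quotient correction since $P$ is a smooth point. For part (2), however, your route genuinely differs in its key device. The paper, after writing the same three charts and making the same identification of the geometry over $E_0$ with the two-dimensional blow-up $\varpi_1$, establishes transversality of $\widehat V$ and $E_0$ with $E_1$ via the equivariant automorphism $[(x,y,z)]\mapsto[(x,y,z+h_1(x,y))]$ of $X(p_1;-1,q_1,\nu_1)$ (note that $\widehat V\cap E_1$ avoids $[0:0:1]$, so the first two charts suffice for this), and then settles the triple points quantitatively: it proves the equality~\eqref{inter_SIS} of local intersection numbers of the curves $E_0\cap E_1$ and $\widehat V\cap E_1$ inside $E_1=\P^2(p_1,q_1,\nu_1)$ with those of ${\bf C}$ and $\E_1$ in $\widehat{\C}^2_{(p_1,q_1)}$, using the intersection theory of Propositions~\ref{formula_self-intersection} and~\ref{bezout_th_P2w-mu_d} and Remark~\ref{computation_local_number_V-surface}, and reads off the non-transversality loci from that. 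You instead compare differentials chart by chart and use Euler's identity $p_1x\,\partial_xh_{\nu_1}+q_1y\,\partial_yh_{\nu_1}=\nu_1h_{\nu_1}$ to exclude any failure of $\Q$-normal crossings on $E_1\setminus E_0$; this is more elementary (it needs no intersection theory at all), and it isolates cleanly how the superisolated hypothesis enters. What the paper's stronger, quantitative statement buys is reusability: the equality of intersection \emph{multiplicities}, not merely of transversality loci, is what feeds the inductive step (Lemma~\ref{step2} and Proposition~\ref{step_b_SIS}(4)) and the later Euler-characteristic computations for A'Campo's formula, so within the paper's architecture the intersection-theoretic route does double duty, whereas your argument, while sufficient for this lemma, would need to be rerun quantitatively at those later stages.
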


\begin{proof}
Let us start by blowing up the point $P\in {\bf C}$ with respect to the weight vector $(p_1,q_1)$, $\gcd(p_1,q_1)=1$, in the tangent cone. Consider the local coordinates of Lemma~\ref{step_zero} around~$P$ so that the equation of~${\bf C}$ is~$h(x,y)=0$; thus $\nu_1= \ord_{(p_1,q_1)} h(x,y)$.

The ambient space obtained has two cyclic quotient singular points corresponding to the origin of each chart and located at the exceptional divisor~$\mathcal{E}_1$. The latter can be identified with the usual projective line $\P^1(p_1,q_1) \simeq \P^1$ under the map $[x:y] \mapsto [x^{q_1}:x^{p_1}]$, and it has self-intersection $\frac{-1\ \ }{p_1 q_1}$ by Proposition~\ref{formula_self-intersection}. Using the charts described in Example~\ref{blowup_dim2},
$$
\begin{array}{c|c}
\text{1st chart\ } & X(p_1;-1,q_1) \ \longrightarrow \ \, \widehat{\C}^2(p_1,q_1), \\[0.2cm]
& \,[(x,y)] \ \mapsto \ \big((x^{p_1},x^{q_1} y),[1:y]_{(p_1,q_1)}\big);
\end{array}
$$
$$
\begin{array}{c|c}
\text{2nd chart} & X(q_1;p_1,-1) \ \longrightarrow \ \, \widehat{\C}^2(p_1,q_1), \\[0.2cm]
& \, [(x,y)] \ \mapsto \ \big((x y^{p_1}, y^{q_1}),[x:1]_{(p_1,q_1)}\big);
\end{array}
$$
one obtains the following equations for the divisor $\varpi_1^{*}({\bf C}) = {\bf C} + \nu_1 \E_1$, see Figure~\ref{step1_C}.
$$
X(p_1;-1,q_1) \supseteq \begin{cases} \E_1: & x=0;\\ {\bf C}: & h_1(x,y)=0, \end{cases}
$$
$$
X(q_1;p_1,-1) \supseteq \begin{cases} \E_1: & y=0;\\ {\bf C}: & h_2(x,y)=0. \end{cases}
$$

Note that $h_1(x,y)$ and $h_2(x,y)$ are not functions on the previous quotient spaces but they define a zero set, since they satisfy
\begin{equation}\label{equation_h1h2}
h_1 (\xi_{p_1}^{-1} x, \xi_{p_1}^{q_1} y) = \xi_{p_1}^{\nu_1} h_1 (x,y), \quad h_2 (\xi_{q_1}^{p_1} x, \xi_{q_1}^{-1} y) = \xi_{q_1}^{\nu_1} h_2 (x,y).
\end{equation}

\begin{figure}[h t]
\centering
\includegraphics{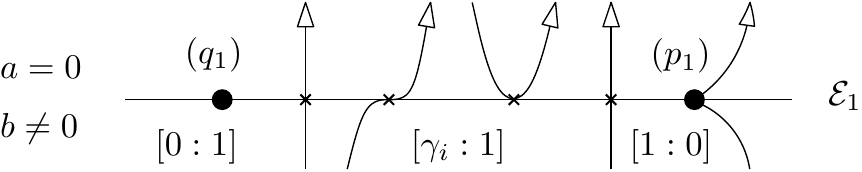}
\caption{Step $1$ in the embedded ${\bf Q}$-resolution of $({\bf C},P)$.}
\label{step1_C}
\end{figure}

Also, if the sum $h = h_{\nu_1} + h_{\nu_1+l} + \cdots$ is the decomposition of $h(x,y)$ into $(p_1,q_1)$-homogeneous parts, then $h_1(0,y) = h_{\nu_1}(1,y)$, $h_2(x,0) = h_{\nu_1}(x,1)$, and the (global) equation of ${\bf C}\cap \E_1 \subset \P^1{(p_1,q_1)}$ is of the form

\begin{equation*}
h_{\nu_1}(x,y) = x^{a} y^{b} \prod_{i} (x^{q_1} - \gamma_i^{q_1} y^{p_1})^{e_i} = 0.
\end{equation*}

Thus the intersection multiplicity of $\E_1$ and ${\bf C}$ at the point $[\gamma_i:1]$ is $e_i$, while it is $\frac{a}{\, q_1}$ (resp.~$\frac{b}{\, p_1}$), not necessarily an integer, at the singular point $[0:1]$ (resp.~$[1:0]$), see Remark~\ref{rem_intNum_step1} below.

Now describe the weighted blow-up at $P$ with respect to $(p_1,q_1,\nu_1)$ in dimension $3$. The new space has in general two (not three because $p_1$ and $q_1$ are coprime) cyclic quotient singular lines, each of them isomorphic to~$\P^1$, and located at the new exceptional divisor $E_1$. They correspond to the lines at infinity $x=0$ and $y=0$ of $E_1 = \P^2(p_1,q_1,\nu_1)$.

As an abstract space, $E_1$ contains two singular points and it is isomorphic to another weighted projective plane as the following expression shows, see Equation~\eqref{propPw},
$$
\begin{array}{c c c}
\P^2(p_1,q_1,\nu_1) &\longrightarrow& \P^2 \Big( \frac{p_1}{(p_1,\nu_1)}, \frac{q_1}{(q_1,\nu_1)}, \frac{\nu_1}{(p_1,\nu_1)(q_1,\nu_1)} \Big), \\[0.25cm]
\, [x:y:z] &\mapsto& [ x^{(q_1,\nu_1)} : y^{(p_1,\nu_1)} : z ].
\end{array}
$$

The multiplicity of $E_1$ is the sum of the $(p_1,q_1,\nu_1)$-multiplicities, in our local coordinates, of the components of the divisor $\pi^{*}_0(V)$ that pass through~$P$, that is $\nu_1 m + \nu_1 = (m+1)\nu_1$. Hence the total transform is the divisor
$$
\pi_1^{*} \pi^{*}_0(V) = \widehat{V} + m E_0 + (m+1)\nu_1 E_1.
$$

The equations in the three charts are given in the table below. Note that the cyclic quotient spaces are represented by normalized types, since $\gcd(p_1,q_1,\nu_1)=1$, see Example~\ref{blowup_dim3_smooth}. \vspace{0.05cm}
$$
\begin{array}{|c|c c c|c c|}
\hline
&& X(p_1;-1,q_1,\nu_1) &&& X(q_1;p_1,-1,\nu_1) \\
(x,y,z) \stackrel{\pi_1}{\longmapsto} && (x^{p_1}, x^{q_1} y, x^{\nu_1} z) &&& (x y^{p_1}, y^{q_1}, y^{\nu_1} z) \\
\hline
E_0 && z=0 &&& z=0 \\
E_1 && x=0 &&& y=0 \\
\widehat{V} && z+h_1(x,y)=0 &&& z+h_2(x,y)=0 \\
\hline
\end{array}
$$

\vspace{0.05cm}

$$
\begin{array}{|c|c c|}
\hline
&& X(\nu_1;p_1,q_1,-1)\\
(x,y,z) \stackrel{\pi_1}{\longmapsto} && (x z^{p_1}, y z^{q_1}, z^{\nu_1})\\
\hline
E_0 && -\\
E_1 && z=0\\
\widehat{V} && 1 + h_{\nu_1}(x,y) + z^l h_{\nu_1+l}(x,y) + \cdots = 0\\
\hline
\end{array}
$$

\vspace{0.25cm}

Using the automorphism on $X(p_1; -1, q_1, \nu_1)$ defined by $[(x,y,z)] \mapsto [( x,y,z+h_1(x,y) )]$, which is well defined due to~\eqref{equation_h1h2}, one sees that both $E_0$ and $\widehat{V}$ intersect transversely $E_1$. The equations of these intersections are given by
\begin{align*}
E_0\cap E_1 & = \{z=0\}, \\
\widehat{V}\cap E_1 & = \{ z+h_{\nu_1}(x,y) = 0\},
\end{align*}
as projective subvarieties in $E_1 = \P^2(p_1,q_1,\nu_1)$.


By Proposition~\ref{bezout_th_P2w-mu_d}, these smooth projective curves are two sections of $E_1$ with self-intersection $\frac{\nu_1}{p_1 q_1}$. They meet at $\#({\bf C}\cap \E_1)$ points with exactly the same intersection number as in ${\bf C}\cap \E_1$, that is, for $P \in {\bf C} \cap \mathcal{E}_1 \equiv \widehat{V} \cap E_0 \cap E_1$, one has
\begin{equation}\label{inter_SIS}
\left( E_0 \cap E_1, \widehat{V} \cap E_1;\, E_1 \right)_P = \left( {\bf C}, \mathcal{E}_1;\, \widehat{\C}^2_{(p_1,q_1)} \right)_P.
\end{equation}

On the other hand, the intersection of the total transform with $E_0$ produces an identical situation to the tangent cone. All these statements follow from the equations above. In Figure~\ref{step1_V}, we see the intersection of the divisor $\pi^{*}_1 \pi^{*}_0 (V)$ with $E_0$ and~$E_1$, respectively.

\begin{figure}[h t]
\centering
\includegraphics{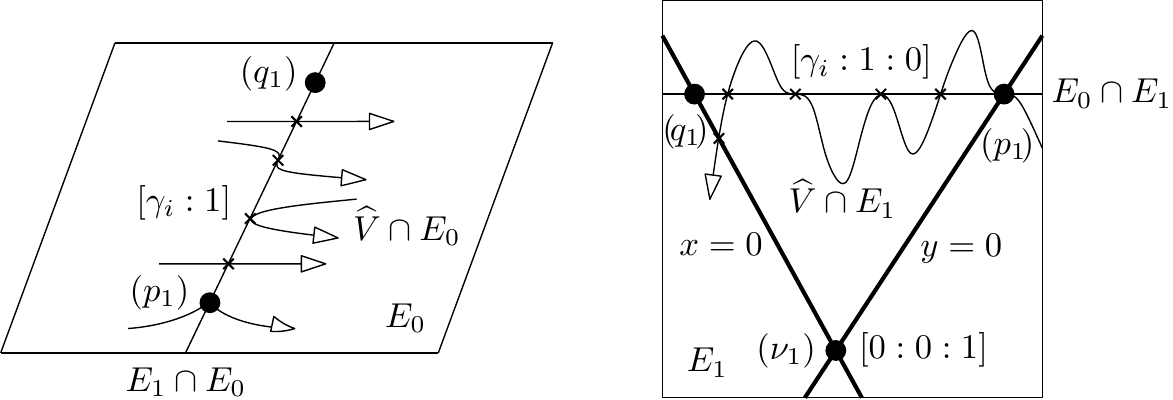}
\caption{Step $1$ in the ${\bf Q}$-resolution of $(V,0)$.}\label{step1_V}
\end{figure}

Finally, the triple points of the total transform in dimension $3$ are identified with the points of ${\bf C}\cap \E_1$ and, by~(\ref{inter_SIS}), the intersection at one of those points is transverse if and only if so is it in dimension $2$. This concludes the proof.
\end{proof}

\begin{remark}\label{rem_intNum_step1}
To study the curves $\{z=0 \}$ and $\{z + h_{\nu_1}(x,y) = 0\}$ in $\P^2(p_1,q_1,\nu_1)$ at the point $[0:1:0]$, one chooses the second chart of the weighted projective plane and obtains the local equations $z=0$ and $z+x^a=0$ around the origin of $X(q_1;p_1,\nu_1)$.
The intersection multiplicity at that point is $a/q_1$, although the quotient space is not represented by a normalized type, see Remark~\ref{computation_local_number_V-surface}. Analogous considerations follow for the points $[\gamma_i:1:0]$ and $[1:0:0]$. This fact was used to prove~(\ref{inter_SIS}).
\end{remark}

\vspace{0.1cm}

\begin{remark}
The curve $\widehat{V} \cap E_1$ meets the line $x=0$ (resp.~$y=0$) in the projective plane $\P^2(p_1,q_1,\nu_1)$ at exactly one point and the intersection is always transverse. If $a=0$ (resp.~$b=0$), then $\gcd(q_1,\nu_1)=q_1$ (resp.~$\gcd(p_1,\nu_1)=p_1$) and that point is different from the origins, see table with the equations. This is important to obtain transversality in the next steps of the resolution of~$(V,0)$.
\end{remark}


After the first blow-up a very similar situation to Lemma \ref{step_zero} is produced, except that there is a new divisor to be considered and the points where the total transform does not have normal crossings could be singular in the ambient space. The main advantage compared with Artal's resolution~\cite{Artal94} is that in the latter $\nu_1$ blow-ups at points and rational curves were needed to achieve a similar situation.

The next result is the second step in the resolution of $(V,0)$ and it corresponds to the second step in the resolution of $({\bf C},P)$. Fix a point $P_a\in NT(\varpi^{*}_{1}({\bf C}))$ and, to cover all cases, assume~$P_a$ is possibly not smooth in the ambient space.

\begin{lemma}[Step 2]\label{step2}
Let $(p_a,q_a)\in \N^2$ be two positive coprime numbers. Let $\varpi_a$ be the weighted blow-up at $P_a$ with respect to $(p_a,q_a)$. Denote by $\E_a$ its exceptional divisor, $\nu_a$ the $(p_a,q_a)$-multiplicity of ${\bf C}$ at~$P_a$, and $m_a$ the multiplicity of $\E_a$.


Consider $\pi_a$ the $(p_a,q_a,\nu_a)$-weighted blow-up at~$P_a$ in dimension~$3$ and let $E_a$ be the corresponding exceptional divisor. Then, the new total transforms satisfy:
\begin{enumerate}
\item $\displaystyle m_a = \frac{\nu_a + p_a \nu_1}{\gcd(p_1,q_a + p_a q_1)}$,
\item $\varpi_a^{*} \varpi_1^{*} ({\bf C}) = {\bf C} + \nu_1 \E_1 + m_a \E_a$,
\item $\pi_a^{*} \pi_1^{*} \pi_0^{*}(V) = \widehat{V} + m E_0 + (m+1) \nu_1 E_1 + (m+1) m_a E_a$,
\item $NT( \pi_a^{*} \pi_1^{*} \pi_0^{*}(V) ) = NT( \varpi_a^{*} \varpi_1^{*} ({\bf C}) )$.
\end{enumerate}
\end{lemma}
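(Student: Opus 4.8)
The plan is to follow the architecture of Step~1 (Lemma~\ref{step1}) almost verbatim, the only novelties being that $P_a$ now lies on the quotient singular locus and that an extra exceptional divisor $E_1$ must be carried along. The heart of the matter is to produce at $P_a$ a local normal form in dimension~$3$ mirroring the one in dimension~$2$. Working in the chart of the $(p_1,q_1,\nu_1)$-blow-up in which $P_a$ is the origin, the ambient germ is of type $(p_1;-1,q_1,\nu_1)$ and $E_1=\{x=0\}$. Since, by Step~1, the restriction of the total transform to $E_0=\{z=0\}$ reproduces the resolution of the tangent cone, the strict transform satisfies $\widehat V\cap E_0={\bf C}$, and I would record the local equations
$$
E_1:\ x=0,\qquad E_0:\ z=0,\qquad \widehat V:\ z+g(x,y)=0,
$$
where $g=0$ is a local equation of ${\bf C}$ at $P_a$; in particular $\nu_a=\ord_{(p_a,q_a)}g$. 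This is exactly a Step-$0$ configuration (Lemma~\ref{step_zero}) decorated with the extra divisor $E_1$, and it reduces the proof to a bookkeeping of $(p_a,q_a,\nu_a)$-orders on $X(p_1;-1,q_1,\nu_1)$.

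For (1) and (2) I would read off the dimension-$2$ picture. Applying Proposition~\ref{formula_self-intersection}(2) to the divisor $C=\varpi_1^{*}({\bf C})={\bf C}+\nu_1\E_1$ on $X(p_1;-1,q_1)$, the coefficient of $\E_a$ is $\nu/e$, where $\nu$ is the $(p_a,q_a)$-multiplicity of $C$ at $P_a$ and $e=\gcd(p_1,\,p_aq_1-q_a(-1))=\gcd(p_1,\,q_a+p_aq_1)$ is the integer attached to the type $(p_1;-1,q_1)$ in Example~\ref{blowup_dim2}. Because $\E_1=\{x=0\}$ has $(p_a,q_a)$-multiplicity $p_a$ while ${\bf C}$ has $(p_a,q_a)$-multiplicity $\nu_a$, one gets $\nu=\nu_a+p_a\nu_1$, hence $m_a=\tfrac{\nu_a+p_a\nu_1}{\gcd(p_1,q_a+p_aq_1)}$ and $\varpi_a^{*}\varpi_1^{*}({\bf C})={\bf C}+\nu_1\E_1+m_a\E_a$. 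The remaining positions of $P_a$ on $\E_1$ (the other chart origin, or a smooth point where $e=1$) are handled by the same computation after relabelling.

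For (3) the key point is that the third weight has been chosen to be $\nu_a=\ord_{(p_a,q_a)}g$, so that in the $(p_a,q_a,\nu_a)$-blow-up both $z$ and $g$ — and therefore $E_0=\{z=0\}$ and $\widehat V=\{z+g=0\}$ — acquire the same order $\nu_a$ as ${\bf C}$ does downstairs. Substituting the blow-up map into the local equation $(z+g)\,z^{m}\,x^{(m+1)\nu_1}$ of $\pi_1^{*}\pi_0^{*}(V)$, the order along $E_a$ in the smooth cover is $\nu_a+m\nu_a+(m+1)\nu_1p_a=(m+1)(\nu_a+p_a\nu_1)$, i.e.\ exactly $(m+1)$ times the dimension-$2$ order; dividing by the normalization factor $e$ then yields multiplicity $(m+1)m_a$, which is (3). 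For (4) I would reproduce the computation~\eqref{inter_SIS}: on $E_a$ the three curves $E_1\cap E_a=\{x=0\}$, $E_0\cap E_a=\{z=0\}$ and $\widehat V\cap E_a=\{z+g_{\nu_a}=0\}$ realize the dimension-$2$ configuration $\E_1\cap\E_a$, ${\bf C}\cap\E_a$; using Proposition~\ref{bezout_th_P2w-mu_d}, Remark~\ref{computation_local_number_V-surface} and Remark~\ref{rem_intNum_step1} one matches the local intersection numbers, while the transversality of $\widehat V\cap E_a$ with the line $x=0$ (as in the remark after Lemma~\ref{step1}) guarantees that no spurious non-transversality is created along $E_1$. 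Hence a triple point of $\pi_a^{*}\pi_1^{*}\pi_0^{*}(V)$ is non-transverse if and only if the corresponding point of $\varpi_a^{*}\varpi_1^{*}({\bf C})$ is, and the two loci coincide.

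The main obstacle is entirely at the level of charts rather than ideas: one must check that the normal form above genuinely holds at a point of the singular locus of the ambient space, which requires writing $X(p_1;-1,q_1,\nu_1)$ in the (possibly non-normalized) coordinates of Example~\ref{blowup_dim3_singular} and verifying that the semi-invariance~\eqref{equation_h1h2} survives the coordinate change used to straighten $\widehat V$. The subtlest bookkeeping is to confirm that the divisorial factor dividing the $(p_a,q_a,\nu_a)$-order in dimension~$3$ is the same $e=\gcd(p_1,q_a+p_aq_1)$ as in dimension~$2$ — that is, that the extra $z$-direction does not shrink it — so that the factor $(m+1)$ comes out clean; this is the one place where the three-dimensional quotient structure must be confronted directly.
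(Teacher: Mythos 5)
Your proposal is correct and takes essentially the same route as the paper's proof: place $P_a$ at the origin of the chart $X(p_1;-1,q_1,\nu_1)$ with local data $E_1:x=0$, $E_0:z=0$, $\widehat V:z+h_1(x,y)=0$, obtain $m_a$ and the coefficient $(m+1)m_a$ by summing $(p_a,q_a,\nu_a)$-orders and dividing by $d=\gcd(p_1,q_a+p_aq_1)$, and then identify the triple points $\widehat V\cap E_0\cap E_a$ with ${\bf C}\cap\E_a$, matching intersection numbers on $E_a$ via Proposition~\ref{bezout_th_P2w-mu_d} and Remark~\ref{computation_local_number_V-surface}. The one subtlety you flag --- that the same gcd $d$ serves as the normalization factor in dimensions $2$ and $3$, i.e.\ $d\mid \nu_a+p_a\nu_1$ --- is precisely what the paper settles via Remark~\ref{ma_is_integer} and its explicit chart tables, so your outline fills in to the paper's argument with no genuine gap.
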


\begin{proof}
To fix ideas assume that $P_a = [1:0] \in {\bf C}\cap \E_1$. The other cases follow analogously.
Let us first describe the $(p_a,q_a)$-weighted blow-up at the point $P_a$ in the tangent cone. Consider local coordinates around $P_a$ so that the equation of $\varpi_1({\bf C}) = {\bf C} + \nu_1 \E_1$ is given by the well-defined function
$$
x^{\nu_1} h_1(x,y)\, :\, X(p_1;-1,q_1) \longrightarrow \C,
$$
where $x=0$ is the exceptional divisor $\E_1$ and $h_1(x,y)=0$ is the strict transform of the curve as in the proof of Lemma \ref{step_zero}. Hence the order at $P_a$ is $\nu_a= \ord_{(p_a,q_a)} h_1(x,y)$.

Also, take $\alpha_1$, $\beta_1$ satisfying the B\'{e}zout's identity $\alpha_1 p_1 + \beta_1 q_1 =1$ so that $X(p_1; -1, q_1) = X(p_1; \beta_1,-1)$ and thus $x^{\nu_1} h_1(x,y)$ also defines a function on the latter quotient space.

Denote $d := \gcd(p_1, q_a + p_a q_1)$. Two new cyclic quotient singularities of orders $\frac{p_1 p_a}{d}$ and $\frac{p_1 q_a}{d}$ appear in the ambient space. They correspond to the origin of each chart and thus located at the new exceptional divisor
$$
\mathcal{E}_a = \P^1_{(p_a,q_a)} \Big/ \mu_{p_1} = \P^1_{(p_a,q_a)}(p_1; -1, q_1),
$$
which has self-intersection $\frac{-d^2\ }{p_1 p_a q_a}$, see Proposition~\ref{formula_self-intersection}.

Let $h_1 = h_{\nu_a} + h_{\nu_a+l} + \cdots$ be the decomposition of $h_1(x,y)$ into $(p_a,q_a)$-homogeneous parts. Denote by $g_1(x,y)$ and $g_2(x,y)$ the unique polynomials such that
$$
h_1(x^{p_a}, x^{q_a} y) = x^{\nu_a} g_1(x,y), \quad
h_1(x y^{p_a}, y^{q_a}) = y^{\nu_a} g_2(x,y).
$$
Then, $g_1(x^{\frac{1}{d}},y)|_{x=0} = g_1(0,y) = h_{\nu_a}(1,y)$, and $g_2(x,y^{\frac{1}{d}})|_{y=0} = g_2(x,0) = h_{\nu_a}(x,1)$. Hence the set of points ${\bf C}\cap \E_a$ is given by the (global) equation
$$
\{ h_{\nu_a}(x,y) = 0\} \subset \P^1_{(p_a,q_a)}(p_1;-1,q_1).
$$

Note that $h_{\nu_a}(x,y)$ is not a function on the previous quotient space but it defines a zero set, since
\begin{equation}\label{equation_hnua}
\begin{array}{l}
  h_{\nu_a} (\xi_{p_1}^{-1} x, \xi_{p_1}^{q_1} y) = \xi_{p_1}^{\nu_1} h_{\nu_a} (x,y), \\[0.25cm]
  h_{\nu_a} (\xi_{p_1}^{\beta_1} x, \xi_{p_1}^{-1} y) = \xi_{p_1}^{-\beta_1 \nu_1} h_1(x,y).
\end{array}
\end{equation}

The multiplicity of the new exceptional divisor $\E_a$ is $m_a = \frac{\nu_a + p_a \nu_1}{d}$. The equations of the total transform $\varpi^{*}_a \varpi^{*}_1 ({\bf C})$ in the two charts are given in the table below, see Example~\ref{blowup_dim2}.

\vspace{0.10cm}

\begin{center}
\begin{tabular}{|ll|c|}
\hline 
\multicolumn{2}{|c|}{Equations of $\varpi^{*}_a \varpi^{*}_1 ({\bf C})$} & Chart\\[0.175cm]
\hline
$\E_a:$ & $x=0$ & \multirow{2}{*}{$X\Big(\frac{p_1 p_a}{d};-1,\frac{q_a + p_a q_1}{d}\Big) \ \longrightarrow \ \, \widehat{\C}^2(p_a,q_a)\Big/\mu_{p_1}$}\\[0.125cm]
$\E_1:$ & $-$ &\\
${\bf C}:$ & $g_1(x^{\frac{1}{d}},y) = 0$ & $\big[(x^d,y)\big] \ \mapsto \ \big[ \big((x^{p_a},x^{q_a} y),[1:y]_{(p_a,q_a)}\big) \big]$\\[0.25cm]
\hline 
$\E_a:$ & $y=0$ & \multirow{2}{*}{$X\Big(\frac{p_1 q_a}{d};\frac{p_a + \beta_1 q_a}{d},-1\Big) \ \longrightarrow \ \, \widehat{\C}^2(p_a,q_a)\Big/\mu_{p_1}$}\\[0.125cm]
$\E_1:$ & $x=0$ &\\
${\bf C}:$ & $g_2(x,y^{\frac{1}{d}}) = 0$ & $\big[(x,y^d)\big] \ \mapsto \ \big[ \big((x y^{p_a},y^{q_a}),[x:1]_{(p_a,q_a)}\big) \big]$\\
\hline
\end{tabular}
\end{center}

\vspace{0.10cm}

Now let us see the behavior of the $(p_a,q_a,\nu_a)$-weighted blow-up at the point $P_a$ in dimension~$3$. In our local coordinates around $P_a = [1:0:0] \in (\widehat{V} \cap E_0) \cap E_1$, the equation of the divisor $\,\pi_1^{*} \pi_0^{*} (V) = \widehat{V} + m E_0 + (m+1) \nu_1 E_1\,$ is given by the function
$$
  z^m x^{(m+1) \nu_1} (z+h_1(x,y))\, :\, X(p_1;-1,q_1,\nu_1) \longrightarrow \C.
$$
Note that $X(p_1;-1,q_1,\nu_1) = X(p_1; \beta_1, -1, -\beta_1 \nu_1)$. Now we use the charts described in Example~\ref{blowup_dim3_singular}.

The ambient space has two new lines of singular points corresponding to the lines at infinity $\{x=0\}$ and $\{y=0\}$ of the exceptional divisor
$$
E_a = \P^2_{(p_a,q_a,\nu_a)} \Big/ \mu_{p_1} = \P^2_{(p_a,q_a,\nu_a)}(p_1; -1, q_1, \nu_1).
$$
Recall that $[0:0:1] \in E_a$ is a quotient singular point not necessarily cyclic.

The multiplicity of $E_a$ is the sum of the $(p_a,q_a,\nu_a)$-multiplicities of the components of the divisor $\pi^{*}_1 \pi^{*}_0(V)$ that pass through $P_a$ divided by $d = \gcd(p_1, q_a + p_a q_1)$, that is,
$$
\frac{\nu_a m + p_a (m+1) \nu_1 + \nu_a}{d} = \frac{(m+1)(\nu_a + p_a \nu_1)}{d} = (m+1) m_a.
$$

To study the locus of non-transversality in a neighborhood of $E_a$, the equations of the total transform are calculated in the following table. Note that the third chart is not given in a normalized form but, as we shall see, it is not needed for our purpose.

\begin{center}
\begin{tabular}{lc}
\hline \multicolumn{2}{c}{1st chart} \\
$E_a:$ $x=0$ & \multirow{3}{*}{$X \bigg( \displaystyle\frac{p_1 p_a}{d}; -1, \frac{q_a + p_a q_1}{d}, m_a \bigg)$}\\
$E_1:$ $-$\\
$E_0:$ $z=0$ &\\
$\widehat{V}:$ $z + g_1(x^{\frac{1}{d}},y) = 0$ & $\big[ (x^d,y,z) \big] \ \mapsto \ \big[ \big((x^{p_a},x^{q_a} y, x^{\nu_a} z), [1:y:z] \big) \big]$\\
\\
\hline
\multicolumn{2}{c}{2nd chart} \\
$E_a:$ $y=0$ & \multirow{3}{*}{$X \bigg( \displaystyle\frac{p_1 q_a}{d}; \frac{p_a + \beta_1 q_a}{d}, -1, \frac{\nu_a - \beta_1 \nu_1 q_a}{d} \bigg)$}\\
$E_1:$ $x=0$\\
$E_0:$ $z=0$ &\\
$\widehat{V}:$ $z + g_2(x, y^{\frac{1}{d}}) = 0$ & $\big[ (x,y^d,z) \big] \ \mapsto \ \big[ \big((x y^{p_a},y^{q_a}, y^{\nu_a} z), [x:1:z] \big) \big]$\\
\\
\hline
\multicolumn{2}{c}{3rd chart} \\
$E_a:$ $z=0$ & \multirow{3}{*}{$X \bigg( \begin{array}{c|c c c} \nu_a & p_a & q_a & -1\\ p_1 \nu_a & p_a \nu_1 + \nu_a & q_a \nu_1 - q_1 \nu_a & - \nu_1 \end{array} \bigg)$} \\
$E_1:$ $x=0$\\
$E_0:$ $-$ &\\
$\widehat{V}:$ $1 + \frac{h_1 (x z^{p_a}, y z^{q_a})}{z^{p_a}} = 0$ & $\big[ (x,y,z) \big] \ \mapsto \ \big[ \big((x z^{p_a}, y z^{q_a}, z^{\nu_a}), [x:y:1] \big) \big]$\\
\\ \hline
\end{tabular}
\end{center}

The divisor $m E_0 + (m+1) \nu_1 E_1 + (m+1) m_a E_a$ has clearly normal crossings. Since the polynomial $x^{\nu_1} y^{m_a} g_2 (x, y^{\frac{1}{d}})$ defines a function on the quotient space $X(\frac{p_1 q_a}{d}; \frac{p_a + \beta_1 q_a}{d}, -1)$, the following map is a well-defined automorphism on the corresponding cyclic quotient space
$$
\displaystyle X \Big( \frac{p_1 q_a}{d}; \frac{p_a + \beta_1 q_a}{d}, -1, \frac{\nu_a - \beta_1 \nu_1 q_a}{d} \Big), \ [(x,y,z)] \longmapsto [(x,y,z+g_2(x,y^{\frac{1}{d}})]
$$
and hence the divisor $\widehat{V} + (m+1) \nu_1 E_1 + (m+1) m_a E_a$ has also normal crossings.

Only the intersection $\widehat{V} \cap E_0 \cap E_a$ has to be studied. To do so, we consider the curves $E_0 \cap E_a = \{ z=0 \}$ and $\widehat{V} \cap E_a = \{ z + h_{\nu_a}(x,y) = 0 \}$ as subvarieties in $E_a = \P^2_{(p_a,q_a,\nu_a)}(p_1; -1, q_1, \nu_1)$. The first two charts of the latter space are respectively isomorphic to
$$
  X \Big( \frac{p_1 p_a}{d}; \frac{q_a + p_a q_1}{d}, m_a \Big), \qquad
  X \Big( \frac{p_1 q_a}{d}; \frac{p_a + \beta_1 q_a}{d}, \frac{\nu_a - \beta_1 \nu_1 q_a}{d}
  \Big).
$$

By Proposition~\ref{bezout_th_P2w-mu_d}, these smooth projective curves are two sections of $E_a$ with self-intersection number $\frac{\nu_a d}{p_1 p_a q_a}$; note that
$$
  \gcd \Big(p_1,\, q_a + p_a q_1,\, \nu_a + p_a \nu_1,\,  q_1 \nu_a - \nu_1 q_a \Big) = d,
$$
which is the greatest common divisor needed in the proposition mentioned above.

Now working as in Remark~\ref{rem_intNum_step1}, see also Remark~\ref{computation_local_number_V-surface}, one sees that they meet at $\# ( {\bf C} \cap \E_a )$ points with exactly the same intersection multiplicity as in the latter, that is, for~$P \in {\bf C} \cap \mathcal{E}_a \equiv \widehat{V} \cap E_0 \cap E_a$, one has
\begin{equation}\label{inter_SIS_2}
\left( E_0 \cap E_a, \widehat{V} \cap E_a;\, E_a \right)_P = \left( {\bf C}, \mathcal{E}_a;\, \widehat{\C}^2_{(p_a,q_a)} \big/ \mu_{p_1} \right)_P.
\end{equation}

As in the first step, the intersection of the total transform with $E_0$ produces an identical situation to the tangent cone. Also, note that Figures \ref{step1_C} and \ref{step1_V} can also be used to illustrate the general situation here. The main difference is that the line at infinity $\{ x=0 \} \subset E_a$ coincides with $E_1 \cap E_a$ and thus the point $[0:0:1] \in E_a$ belongs to two divisors.

Now, to finish, observe that the triple points $\widehat{V} \cap E_0 \cap E_a$ of the total transform in dimension $3$ are identified with the points of ${\bf C}\cap \E_a$ and, by~\eqref{inter_SIS_2}, the intersection at one of those points is transverse if and only if so is it in dimension $2$.
\end{proof}

\begin{remark}\label{ma_is_integer}
Note that if $x^k g_1(x,y) : X(e;-1,r) \to \C$ defines a function and $x\nmid g_1(x,y)$, then $d := \gcd(e,r)$ divides $k$ and $g_1(x^{\frac{1}{d}},y)$ is a polynomial. This implies, in particular, that $m_a$ is an integer since the polynomial $x^{\nu_a + p_a \nu_1} g_1(x,y)$ defines a function on $X(p_1 p_a; -1, q_a + p_a q_1)$.
\end{remark}

\vspace{0.1cm}

\begin{remark}\label{remark_acampo_stepa}
If $y\nmid h_{\nu_a} (x,y)$, or equivalently $\E_a \ni [1:0] \notin {\bf C}$, then $p_a | \nu_a$ and $p_1 | (\nu_1 + \frac{\nu_a}{p_a})$; consequently, $\gcd( \frac{p_1 p_a}{d}, m_a) = \frac{p_1 p_a}{d}$.

Indeed, assume that $h_{\nu_a}(x,y) = x^{e_0} y^{e_\infty} \prod_{i\geq 1} (x^{q_a}- \gamma_{i} y^{p_a})^{e_i}$. Then, its order is $\nu_{a} = e_0 p_a + e_\infty q_a + p_a q_a \sum_i e_i$. By~\eqref{equation_hnua}, the following two expressions are equal:
\begin{align*}
h_{\nu_a} ( \xi_{p_1}^{-1} x, \xi_{p_1}^{q_1} y ) & = \xi_{p_1}^{-e_0+e_{\infty} q_1} x^{e_0} y^{e_\infty} \prod ( \xi_{p_1}^{-q_a} x^{q_a} - \xi_{p_1}^{q_1 p_a} \gamma_i y^{p_a} )^{e_i} = \\
& = \xi_{p_1}^{-e_0 + e_\infty q_1 - q_a \sum_i e_i} x^{e_0} y^{e_\infty} \prod (x^{q_a} -  \xi_{p_1}^{q_1 p_a + q_a} \gamma_{i} y^{p_a})^{e_i}, \\[0.1cm]
\xi_{p_1}^{\nu_1} h_{\nu_a} (x,y) & = \xi_{p_1}^{\nu_1} x^{e_0} y^{e_\infty} \prod (x^{q_a} - \gamma_{i} y^{p_a})^{e_i}.
\end{align*}
Hence $p_1$ divides $\nu_1 + e_0 - e_\infty q_1 + q_a \sum_i e_i$. In the case $e_\infty = 0$, the latter number is $\nu_1 + \frac{\nu_a}{p_a}$ and the claim follows.

Anologously, if $x\nmid h_{\nu_a}(x,y)$ ($\Leftrightarrow \E_a \ni [0:1] \notin {\bf C} \Leftrightarrow e_0 = 0$), then one has that $q_a | \nu_a$ and $p_1 | (\frac{\nu_a}{q_a} - \beta_1 \nu_1)$; consequently, $\gcd(\frac{p_1 q_a}{d}, \frac{p_a + \beta_1 q_a}{d}) = \frac{p_1 q_a}{d}$.
\end{remark}

\vspace{0.1cm}

\begin{remark}
Although the third chart, say $X_3$, is not in general a cyclic quotient space, there are a couple of situations where it is.
\begin{itemize}
\item If $\gcd(\nu_1, \nu_a) = 1$, then the action given by the second row includes the first one and thus $X_3$ is just $\C^3$ under the second row action.
\item Also if $\gcd (p_1, \nu_1) = 1$ and $\lambda$ is the inverse of $\nu_1$ modulo $p_1$, then $X(p_1; -1, q_1, \nu_1)$ can be written in the form $X(p_1; \lambda, - \lambda q_1, -1)$ and
thus $X_3 = X(p_1 \nu_a; p_a + \lambda \nu_a, q_a - \lambda q_1 \nu_a, -1)$.
\end{itemize}
\end{remark}

Let $\Gamma$ and $\Gamma_{+}$ be the dual graphs associated with the total transform and the exceptional divisor, after having computed an embedded $\Q$-resolution of $({\bf C}, P)$, respectively. Denote by $S(\Gamma)$ and $S(\Gamma_{+})$ the sets of their vertices. The classical partial order on $S(\Gamma_{+})$ is denoted by $\preccurlyeq$.

The locus of non-transversality after the last blow-up in dimension~3 is identified with the locus of non-transversality in the resolution of~$({\bf C},P)$. Each of these points corresponds to a weighted blow-up in the resolution of the tangent cone, that is, to a vertex of $\Gamma_{+}$. Thus in the next step we need to blow-up those points to produce a similar situation. Again the same operation will be applied to the points where the total transform is not a normal crossing divisor. These points will also be associated with vertices of $\Gamma_{+}$.

The following result is proven by induction on $S(\Gamma_{+})$ using the relation~$\preccurlyeq$. Lemma~\ref{step1} is the first step in the induction. The proof of Lemma~\ref{step2} tells us the way to show the general case. Let $b\in S(\Gamma_{+})$ be a vertex such that $P_b$ belongs to the locus of non-transversality of the total transform. As usual, denote by $\E_b$ the exceptional divisor appearing after blowing up the point $P_b$.

\begin{prop}[Step $b$]\label{step_b_SIS}
Let $\varpi_b$ be the $(p_b,q_b)$-weighted blow-up at $P_b$ with $b\in S(\Gamma_{+})$. Denote by $\E_b$ its exceptional divisor, $\nu_b$ the $(p_b,q_b)$-mul\-ti\-plic\-i\-ty of ${\bf C} \subset \C^2$, and $m_b$ the multiplicity of $\E_b$.

Consider $\pi_b$ the $(p_b,q_b,\nu_b)$-weighted blow-up at $P_b$ in dimension $3$ and $E_b$ the corresponding exceptional divisor. Then, after blowing up the point~$P_b$, the new total transform verifies:
\begin{enumerate}
\item The exceptional divisor $E_b$ is isomorphic to $\P^2 (p_b, q_b, \nu_b) / \mu_e$ and its multiplicity equals $(m+1) m_b$. In general, the lines at infinity $\{x=0\}$ and $\{y=0\}$ are quotient singular in the ambient space and the point $[0:0:1]$ is the only one which may be non-cyclic. By contrast, the stratum $\{ z=0 \} \setminus \{ [0:1:0], [1:0:0] \} \subset E_b $ does not contain singular points of the ambient space.

\item Let $a$ be a vertex such that $a\prec b$. Then, $E_a \cap E_b \neq \emptyset$ if and only if
$P_b \in \E_a$. In such a case, $E_a \cap E_b$ is one of the two lines at infinity of $E_b$ different from~$\{z=0\}$. If $P_b \in \E_a \cap \E_{a'}$, $a\neq a'$, then the corresponding lines are different and hence they meet at the point $[0:0:1]$.

\item The intersection of the rest of components with $E_0$ produces an identical situation to the resolution of $({\bf C},P)$, after blowing up the point $P_b$. More precisely,
\begin{align*}
\widehat{V} \cap E_0 & = {\bf C}, \\
E_b \cap E_0 & = \E_b, \\
E_a \cap E_0 & = \E_a, \quad \forall a \preccurlyeq b.
\end{align*}

\item The curves $E_0 \cap E_b = \{z=0\}$ and $\widehat{V} \cap E_b = \{ z + H_{\nu_b}(x,y) = 0 \}$ are two $\big( \frac{- \mathcal{E}_b^2 \nu_b}{d} \big)$-sections of $E_b$ and the intersecting points can be identified with ${\bf C}\cap \E_b$. Moreover, the intersection multiplicity of these two sections at one of those points is the same as in the latter, that is, for $P \in {\bf C} \cap \mathcal{E}_b \equiv \widehat{V} \cap E_0 \cap E_b$, one has
$$
\hspace{1.5cm} \left( E_0 \cap E_b, \widehat{V} \cap E_b;\, E_b \right)_P = \left( {\bf C}, \mathcal{E}_b;\, \widehat{\C}^2_{(p_b, q_b)} \big/ \mu_{e} \right)_P.
$$
If $P_b\in \E_a$, then $E_a \cap E_b$ and $\widehat{V}\cap E_b$ always meet at exactly one point. This point passes through $E_0\cap E_b$ if and only if ${\bf C}\cap \E_a \cap \E_b \neq \emptyset$. This is the case when there exist quadruple points.

\item The locus of non-transversality of the total transform in dimension 3 is identified with the one in the resolution of $({\bf C},P)$. These points belong to $\widehat{V} \cap E_0 \cap E_b = {\bf C} \cap \E_b$ and they correspond to the ones where the curves $E_0 \cap E_b$ and $\widehat{V} \cap E_b$, or equivalently $\E_b$ and ${\bf C}$, do not meet transversely.

\item The strict transform $\widehat{V}$ never passes through $[0:0:1]\in E_b$. In particular, $\widehat{V}$ only contains cyclic quotient singularities.
\end{enumerate}
\end{prop}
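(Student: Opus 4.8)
The plan is to prove Proposition~\ref{step_b_SIS} by induction on the poset $(S(\Gamma_+),\preccurlyeq)$, exactly mirroring the two explicit calculations already carried out in Lemma~\ref{step1} and Lemma~\ref{step2}. Lemma~\ref{step1} furnishes the base case (a point $P_b$ lying only on $E_0$ and $E_1$), and the computation in Lemma~\ref{step2} shows the passage from one vertex to its successors; the general step differs only in that the quotient data of the ambient space at $P_b$ may now be by an arbitrary $\mu_e$ rather than $\mu_{p_1}$, so I would carry the inductive hypothesis that the local picture at $P_b$ is of type $X(e;a,b,c)$ with the strict transform $\widehat V$ given by $z+h(x,y)=0$, the exceptional divisors $E_a$ (for $a\prec b$) given by coordinate hyperplanes, and $E_0$ by $z=0$, matching in two dimensions the situation $\{H_{\nu_b}(x,y)=0\}\subset\P^1_{(p_b,q_b)}/\mu_e$ from the embedded $\Q$-resolution of $({\bf C},P)$.

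First I would set up the three charts of the $(p_b,q_b,\nu_b)$-weighted blow-up of $X(e;a,b,c)$ using Example~\ref{blowup_dim3_singular}, reading off the quotient types of $\widehat U_1,\widehat U_2,\widehat U_3$ and the local equations of $E_0,E_a,E_b,\widehat V$ precisely as in the table of Lemma~\ref{step2}. From the first two charts one obtains that $E_b\cong\P^2_{(p_b,q_b,\nu_b)}/\mu_e=\P^2(p_b,q_b,\nu_b)/\mu_e$ with its two singular lines at infinity $\{x=0\}$, $\{y=0\}$ and the possibly non-cyclic point $[0:0:1]$, giving item~(1); the coincidence of $\{x=0\}$ or $\{y=0\}$ with $E_a\cap E_b$ exactly when $P_b\in\E_a$ gives item~(2), and the fact that distinct $E_a,E_{a'}$ through $P_b$ yield distinct lines meeting at $[0:0:1]$ is immediate from the chart equations. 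For item~(3) I would restrict all equations to $E_0=\{z=0\}$ and observe that the resulting picture is verbatim the $(p_b,q_b)$-blow-up of the tangent cone, so $\widehat V\cap E_0={\bf C}$, $E_b\cap E_0=\E_b$, and $E_a\cap E_0=\E_a$ for all $a\preccurlyeq b$.

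The quantitative core is item~(4). Here I would use the normalizing automorphism $[(x,y,z)]\mapsto[(x,y,z+H_{\nu_b}(x,y))]$ (well defined by the analogue of~\eqref{equation_hnua}) to reduce $\widehat V\cap E_b$ to $\{z+H_{\nu_b}(x,y)=0\}$, then invoke Proposition~\ref{bezout_th_P2w-mu_d} to compute the self-intersection of the two sections as $-\E_b^2\,\nu_b/d$, after checking that the relevant greatest common divisor of the $2\times 2$ minors equals the $d=\gcd$ appearing in the local description, exactly as the displayed gcd identity in the proof of Lemma~\ref{step2}. The equality of local intersection multiplicities $(E_0\cap E_b,\widehat V\cap E_b;E_b)_P=({\bf C},\E_b;\widehat\C^2_{(p_b,q_b)}/\mu_e)_P$ then follows by the same local pull-back argument as Remarks~\ref{rem_intNum_step1} and~\ref{computation_local_number_V-surface}. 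Item~(5) is a direct corollary: normal crossings of the divisor holds away from $\widehat V\cap E_0\cap E_b$ by the automorphism argument, so non-transversality occurs exactly at ${\bf C}\cap\E_b$, matching dimension two. Finally, for item~(6) I would examine the third chart: the strict transform there has equation $1+(\text{terms})=0$ whose constant term is a unit, so $\widehat V$ misses $E_b\cap\{z=0\}$ and in particular the point $[0:0:1]$, whence $\widehat V$ meets only the coordinate-plane strata where the ambient space is cyclic.

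The main obstacle will be item~(4): verifying that the $\mu_e$-quotient does not disturb the gcd bookkeeping, i.e.\ that the greatest common divisor of the four quantities entering Proposition~\ref{bezout_th_P2w-mu_d} collapses to the single integer $d$ controlling the chart orders and the multiplicity $m_b$. In Lemma~\ref{step2} this was the identity $\gcd(p_1,q_a+p_aq_1,\nu_a+p_a\nu_1,q_1\nu_a-\nu_1q_a)=d$; for general $e$ one must track how the inductively-inherited weight matrix transforms under the blow-up formulas of Example~\ref{blowup_dim3_singular}, and confirm the self-intersection comes out as the asserted $-\E_b^2\nu_b/d$. Everything else reduces to transcribing the two worked-out lemmas with $p_1$ replaced by $e$ and the single predecessor replaced by the (at most two) predecessors $a\prec b$ recorded in $\Gamma_+$.
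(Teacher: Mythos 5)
Your proposal is correct and follows essentially the same route as the paper: induction on $(S(\Gamma_{+}),\preccurlyeq)$ with Lemma~\ref{step1} as base case, the inductive step modeled on Lemma~\ref{step2}, local equations computed chart by chart via Example~\ref{blowup_dim3_singular}, and Propositions~\ref{formula_self-intersection}, \ref{bezout_th_P2w-mu_d} together with Remark~\ref{computation_local_number_V-surface} handling the intersection-theoretic claims of item~(4). The only refinement the paper adds is that its inductive hypothesis records the total transforms as well-defined functions $x^{m_a}y^{m_{a'}}H(x,y)$ on $X(e;r,s)$ and $x^{(m+1)m_a}y^{(m+1)m_{a'}}z^m\big[z+H(x,y)\big]$ on $X(e;r,s,t)$, which yields the congruence $t+m_a r+m_{a'} s\equiv 0 \ (\mathrm{mod}\ e)$ tying the third weight to the two-dimensional data --- exactly the piece of bookkeeping behind the gcd verification you correctly single out as the main obstacle in item~(4).
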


\begin{proof}
By induction on $S(\Gamma_{+})$ with respect to $\preccurlyeq$. Lemma~\ref{step1} is base case. As for the inductive step, one proceeds as in the proof of Lemma~\ref{step2}. Assume, by induction, that the local equation of the total transform in the resolution of the tangent cone around~$P_b$ is given by ($\gcd(e,r) = \gcd(e,s) = 1$)
\begin{equation}\label{eq_stepb_C}
x^{m_{a}} y^{m_{a'}} H(x,y)\, :\, X(e; r,s) \longrightarrow \C,
\end{equation}
where ${\bf C} = \{ H(x,y) = 0 \}$ is the equation of the strict transform and the others correspond to the divisors $\E_a$ and $\E_{a'}$ (they may not appear if $m_a$ or $m_{a'}$ equals zero).

Also, the equation of the total transform around $P_b$ in dimension~$3$ is given by the function
\begin{equation}\label{eq_stepb_V}
x^{(m+1) m_{a}}\cdot y^{(m+1) m_{a'}}\cdot z^{m} \big[z + H(x,y) \big]\, :\, X(e; r,s,t) \longrightarrow \C,
\end{equation}
where $\widehat{V} = \{ z + H(x,y) = 0\}$ is the strict transform, $E_0 = \{z=0\}$, and the others are the divisors $E_a$ and $E_{a'}$ (if they exist). Using that both \eqref{eq_stepb_C} and \eqref{eq_stepb_V} are well-defined functions, one has
$$
t + m_{a} \cdot r + m_{a'} \cdot s \in (e).
$$

The verification of the statement is very simple once the local equations of the divisors appearing in the total transform are calculated. The main ideas behind are contained in the proof of Lemmas~\ref{step1} and~\ref{step2}. The details are omitted to avoid repeating the same arguments; only the local equations are given, see table below.

To do so, consider the following data and use the charts described in Examples~\ref{blowup_dim2} and~\ref{blowup_dim3_singular}. As auxiliary results, Propositions~\ref{formula_self-intersection} and~\ref{bezout_th_P2w-mu_d} and Remark~\ref{computation_local_number_V-surface} are also needed.
\begin{alignat*}{3}
\nu_b & = \ord_{(p_b,q_b)} H (x,y) & \hspace{1cm} m_b & = \frac{p_b\cdot m_a + q_b\cdot m_{a'} + \nu_b}{d} \\
d & = \gcd(e,\, p_b\cdot s - q_b\cdot r) &&
\end{alignat*} \vspace{-0.75cm}
\begin{alignat*}{3}
s' r + s & \equiv 0 \quad \text{mod $(e)$} & \hspace{1cm} r' s + r & \equiv 0 \quad \text{mod $(e)$} \\
H_1(x,y) & = \frac{H(x^{p_b}, x^{q_b} y)}{x^{\nu_b}} & \hspace{1cm} H_2(x,y) & = \frac{H(x y^{p_b},y^{q_b})}{y^{\nu_b}}
\end{alignat*}

These are the equations in the resolution of the tangent cone ${\bf C}$ presented as zero sets in the corresponding (abelian) quotient space, cf.~proof of Lemma~\ref{step2}.

\vspace{0.20cm}

\begin{center}
\begin{tabular}{|ll|c|}
\hline 
\multicolumn{2}{|c|}{Equations} & Chart\\[0.175cm]
\hline
$\E_b:$ & $x=0$ & \multirow{3}{*}{$\displaystyle X \bigg(\frac{e p_b}{d};-1,\frac{q_b + s' p_b}{d}\bigg) \ \longrightarrow \ \, \widehat{\C}^2(p_b,q_b)\Big/\mu_{e}$}\\
$\E_{a}:$ & $-$ &\\
$\E_{a'}$ & $y=0$ & \\
${\bf C}:$ & $H_1(x^{\frac{1}{d}},y) = 0$ & $\big[(x^d,y)\big] \ \mapsto \ \big[ \big((x^{p_b},x^{q_b} y),[1:y]_{(p_b,q_b)}\big) \big]$\\[0.25cm]
\hline 
$\E_b:$ & $y=0$ & \multirow{3}{*}{$\displaystyle X \bigg(\frac{e q_b}{d};\frac{p_b + r' q_b}{d},-1 \bigg) \ \longrightarrow \ \, \widehat{\C}^2(p_b,q_b)\Big/\mu_{e}$}\\
$\E_a:$ & $x=0$ &\\
$\E_{a'}:$ & $-$ & \\
${\bf C}:$ & $H_2(x,y^{\frac{1}{d}}) = 0$ & $\big[(x,y^d)\big] \ \mapsto \ \big[ \big((x y^{p_b},y^{q_b}),[x:1]_{(p_b,q_b)}\big) \big]$\\[0.25cm]
\hline
\end{tabular}
\end{center}

\vspace{0.20cm}

In dimension~$3$, the local equations of the total transform are presented as well-defined functions over the corresponding quotient spaces. The notation is self-explanatory to recognize the equation of each divisor.
$$
\begin{array}{r|cl}
\text{1st chart\ } && X \bigg( \displaystyle\frac{e p_b}{d}; -1, \frac{q_b + s' p_b}{d}, \frac{\nu_b + t' p_b}{d} \bigg) \longrightarrow \C \\[0.5cm]
&& x^{(m+1) m_b}\cdot y^{(m+1) m_{a'}}\cdot z^m \big[ z + H_1 (x^{\frac{1}{d}}, y) \big] 
\\ \multicolumn{2}{c}{} \\
\text{2nd chart\ } && X \bigg( \displaystyle\frac{e q_b}{d}; \frac{p_b + r' q_b}{d}, -1, \frac{\nu_b + t'' q_b}{d} \bigg) \longrightarrow \C \\[0.5cm]
&& x^{(m+1) m_a}\cdot y^{(m+1) m_{b}}\cdot z^m \big[ z + H_2 (x, y^{\frac{1}{d}}) \big]
\\ \multicolumn{2}{c}{} \\
\text{3rd chart\ } && X \bigg( \begin{array}{c | c c c} \nu_b & p_b & q_b & -1 \\ e \nu_b & r \nu_b - t p_b & s \nu_b - t  q_b & t \end{array} \bigg) \longrightarrow \C \\[0.5cm]
&& x^{(m+1) m_a} \cdot y^{(m+1) m_{a'}} \cdot z^{(m+1) m_b \cdot d} \Big[ 1 + \frac{H(x z^{p_b}, y z^{q_b})}{z^{\nu_b}} \Big]
\end{array}
$$

Here $t'$ and $t''$ are taken so that $\, t' r + t \equiv 0\,$ and $\, t'' s + t \equiv 0\,$ modulo~$(e)$. The exceptional divisor $E_b$ is identified with $\P^2 (p_b, q_b, \nu_b) / \mu_{e}$ where the action is of type $(e;r,s,t)$, i.e.~$E_b = \P^2_{(p_b,q_b,\nu_b)} (e; r,s,t)$.
\end{proof}

\begin{remark}
Note that the equations after the blowing-up at~$P_b$ around the points where the total transform is not a normal crossing divisor are of the same form as in~(\ref{eq_stepb_C}) and~(\ref{eq_stepb_V}). Hence, by induction, this fact holds for every stage of the resolution.
\end{remark}

\vspace{0.1cm}

\begin{remark}\label{remark_acampo_key}
Let us write $H_{\nu_b}(x,y) = x^{e_0} y^{e_\infty} \prod_{i\geq 1} ( x^{q_b} - \gamma_i y^{p_b} )^{e_i}$. As in Remark \ref{remark_acampo_stepa}, if
$$
y \nmid H_{\nu_b}(x,y) \quad \big( \Longleftrightarrow \E_b \ni [1:0] \notin {\bf C} \Longleftrightarrow e_\infty = 0 \ \big),
$$
then $p_b | \nu_b$ and $e | (\frac{\nu_b}{p_b} + t')$;  consequently, $\gcd(\frac{e p_b}{d}, \frac{\nu_b + t' p_b}{d}) = \frac{e p_b}{d}$. Analogously, $e_0 = 0$ implies $\gcd(\frac{e q_b}{d}, \frac{v_b + t'' q_b}{d}) = \frac{e q_b}{d}$.
\end{remark}

\begin{theo}\label{summery_theorem}
Given an embedded $\Q$-resolution of $({\bf C},P)$ for all $P\in \Sing({\bf C})$, one can construct an embedded $\Q$-resolution of $(V,0)$, consisting of weighted blow-ups at points. Each of these blow-ups corresponds to a weighted blow-up in the resolution of $({\bf C},P)$ for some $P\in \Sing ({\bf C})$, that is, it corresponds to a vertex of $\Gamma_{+}^P$. \hfill $\Box$
\end{theo}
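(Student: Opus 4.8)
The plan is to assemble the three preceding results---Lemma~\ref{step_zero}, Lemma~\ref{step1} and Proposition~\ref{step_b_SIS}---into a single inductive construction indexed by the dual graph $\Gamma_{+}^{P}$ of the given embedded $\Q$-resolution of each $({\bf C},P)$. First I would fix $P\in\Sing({\bf C})$ and invoke Lemma~\ref{step_zero}: after the ordinary blow-up $\pi_0$ at the origin, the strict transform $\widehat{V}$ is smooth and the locus of non-transversality of $\pi_0^{*}(V)$ is exactly the finite set $\Sing({\bf C})$. This is the ``Step~$0$'' that seeds the induction, and it is here that the special divisor $E_0$ is produced. Since the points of $\Sing({\bf C})$ are distinct points of $\widehat{V}\cap E_0$, the subsequent blow-ups for different $P$ take place in disjoint neighborhoods and may be carried out simultaneously.

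Next I would run an induction on the vertex set $S(\Gamma_{+}^{P})$ with respect to the partial order $\preccurlyeq$. The base case is Lemma~\ref{step1}, which performs the $(p_1,q_1,\nu_1)$-blow-up at $P$ and shows both that the total transform acquires a single new divisor $E_1$ and that the non-transversality locus in dimension~$3$ equals $NT(\varpi_1^{*}({\bf C}))$. The inductive step is precisely Proposition~\ref{step_b_SIS}: for each vertex $b\in S(\Gamma_{+}^{P})$ whose center $P_b$ lies in the current non-transversality locus, the $(p_b,q_b,\nu_b)$-blow-up $\pi_b$ produces an exceptional divisor $E_b\cong\P^2(p_b,q_b,\nu_b)/\mu_{e}$ and, by parts~(3) and~(5) of that proposition, reproduces around $E_0$ an identical situation to the tangent-cone resolution after the corresponding $(p_b,q_b)$-blow-up. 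Hence the $3$-dimensional process is in bijection with the weighted blow-ups of the resolution of $({\bf C},P)$, that is, with the vertices of $\Gamma_{+}^{P}$.

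It then remains to check the three conditions in the definition of an embedded $\Q$-resolution. Every chart produced in Lemmas~\ref{step1},~\ref{step2} and Proposition~\ref{step_b_SIS} is an abelian quotient space $X(\bd;A)$, so $X$ is a $V$-manifold carrying only abelian quotient singularities, giving condition~(1). Because $\rho$ is a finite composition of weighted blow-ups, each an isomorphism off its center and all centers lying over $0\in\Sing(V)$, condition~(2) holds. For the $\Q$-normal crossing condition~(3), the decisive input is the identification of non-transversality loci in part~(5) of Proposition~\ref{step_b_SIS}: at every stage the points where $\rho^{*}(V)$ fails to be $\Q$-normal crossing are exactly those where $\E_b$ and ${\bf C}$ fail to meet transversely in the resolution of $({\bf C},P)$.

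The main obstacle---and the heart of the argument---is termination together with the emptiness of the final non-transversality locus. Here I would use that the given embedded $\Q$-resolution of $({\bf C},P)$ is attained after finitely many weighted blow-ups, one per vertex of the finite graph $\Gamma_{+}^{P}$; the bijection established above then forces the $3$-dimensional construction to terminate after the same finite number of steps. At that terminal stage the dimension-$2$ non-transversality locus is empty, so by part~(5) the dimension-$3$ locus is empty as well, and $\rho^{*}(V)$ is a $\Q$-normal crossing divisor. Performing this simultaneously over all $P\in\Sing({\bf C})$ yields the desired embedded $\Q$-resolution $\rho\colon X\to(\C^3,0)$ built entirely from weighted blow-ups at points, each of which---apart from the initial $\pi_0$ creating $E_0$---corresponds to a vertex of some $\Gamma_{+}^{P}$, as claimed.
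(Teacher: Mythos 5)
Your proposal is correct and follows essentially the same route as the paper: the paper states Theorem~\ref{summery_theorem} with no separate proof precisely because it is the assembly of Lemma~\ref{step_zero}, Lemma~\ref{step1} and Proposition~\ref{step_b_SIS}, the latter itself proven by induction on $S(\Gamma_{+}^{P})$ with respect to $\preccurlyeq$, exactly as you organize it. Your additional explicit checks (the three conditions in the definition of embedded $\Q$-resolution, termination via finiteness of $S(\Gamma_{+}^{P})$, and the identification of non-transversality loci from part~(5) of Proposition~\ref{step_b_SIS}) are just the details the paper leaves implicit, including the correct caveat that the initial blow-up $\pi_0$ producing $E_0$ is the one step not indexed by a vertex of any $\Gamma_{+}^{P}$.
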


We shall see later that an exceptional divisor $E_a^P$ in the resolution of~$(V,0)$ obtained contributes to the monodromy if and only if so does the corresponding divisor $\E_a^P$ in the tangent cone, see Lemma~\ref{euler_char_SIS} and Theorem~\ref{char_poly_SIS}.
In particular, the weights can be chosen so that every exceptional divisor in the embedded $\Q$-resolution of $(V,0)$, except perhaps the first one $E_0$, contributes to its monodromy.

\section{The Characteristic Polynomial of SIS}\label{sec_char_poly_SIS}

Here we plan to apply Theorem~\ref{ATH2} to compute the characteristic polynomial of the monodromy and the Milnor number of $(V,0)$ in terms of its tangent cone $({\bf C},P)$. Some notation need to be introduced, concerning the stratification of each irreducible component of the exceptional divisor in terms of its quotient singularities.

\vspace{0.25cm}

Given a point $P \in \Sing({\bf C})$, denote by $\varrho^{P}: Y^p \to ({\bf C},P)$ an embedded $\Q$-resolution of the tangent cone. Assume that the total transform is given by
$$
(\varrho^{P})^{*}({\bf C},P) = {\bf C} + \sum_{a \in S(\Gamma_{+}^{P})} m_a^{P} \mathcal{E}_a^{P},
$$
where $\E_a^{P}$ is the exceptional divisor of the $(p_a^P,q_a^P)$-blow-up at a point $P_a$ belonging to the locus of non-transversality. Denote by $\nu_a^P$ the $(p_a^P,q_a^P)$-multiplicity of ${\bf C}$ at $P_a$.

Recall that $\E_a^P$ is naturally isomorphic to $\P^1_{(p_a^P,q_a^P)} / \mu_e$. Using this identification, see Figure~\ref{strata_EaP}, define
$$
\E^P_{a,1} = \E^{P}_a \setminus \{ [0:1], [1:0] \}, \qquad
\E^P_{a,x} = \{ [0:1] \}, \qquad
\E^P_{a,y} = \{ [1:0] \}.
$$
The strata $\check{\E}_{a,j}^P := \E^{P}_{a,j} \setminus \big( \E_{a,j}^{P} \cap \big( \bigcup_{b \neq a} \E_b^P \cup {\bf C} \big) \big)$ for $j=1,x,y$ (see notation just above Theorem~\ref{ATH2}) will be considered in Lemma~\ref{euler_char_SIS}.

\vspace{0.25cm}

Let us see the situation in the superisolated singularity $(V,0)$. Denote by $\rho: X \to (V,0)$ the embedded $\Q$-resolution obtained following Proposition~\ref{step_b_SIS}. Then, the total transform is
$$
  \rho^{*}(V,0) = \widehat{V} + m E_0 + \sum_{\begin{subarray}{c} P \in \Sing({\bf C}) \\ a \in S(\Gamma_{+}^{P})\end{subarray}} (m+1) m_a^{P} E_{a}^{P},
$$
and $E_a^P$ appears after the $(p_a^P,q_a^P,\nu_a^P)$-blow-up at the point $P_a$ (recall that the locus of non-transversality in dimension 2 and 3 are identified).

The divisor $E_a^P$ is naturally isomorphic to $\P^2_{(p_a^P, q_a^P, \nu_a^P)} / \mu_e$. Using this identification, see Figure~\ref{strata_EaP}, define
$$
\begin{array}{l}
E_{a,1}^P = E_a^P \setminus \{ xy = 0 \}, \qquad E_{a,x}^P = \{ x=0 \} \setminus \{ [0:1:0], [0:0:1] \}, \\[0.2cm]
E_{a,y}^P = \{ y=0 \} \setminus \{ [1:0:0], [0:0:1] \}, \hfill E_{a,xy}^P = \{ [0:0:1 \}.
\end{array}
$$

Analogously, one considers $E_{a,xz}^P$ and $E_{a,yz}^P$ so that $E_a^P = \bigsqcup_{j} E_{a,j}^{P}$ really defines a stratification of $E_a^P$. However, these two strata belong to more than one irreducible divisor in the total transform and hence they do not contribute to the characteristic polynomial.
As for $E_0$, according to its quotient singularities, no stratification need to be considered (it is smooth).

The Euler characteristic of $\check{E}_0$ and $\check{E}_{a,j}^P := E^{P}_{a,j} \setminus \big( E_{a,j}^{P} \cap \big( \bigcup_{b \neq a} E_b^P \cup \widehat{V} \big) \big)$ for $j=1,x,y,xy$ (see notation just above Theorem~\ref{ATH2}) as well as its multiplicity are calculated in Lemma~\ref{euler_char_SIS}.

\begin{figure}[h t]
\centering
\includegraphics{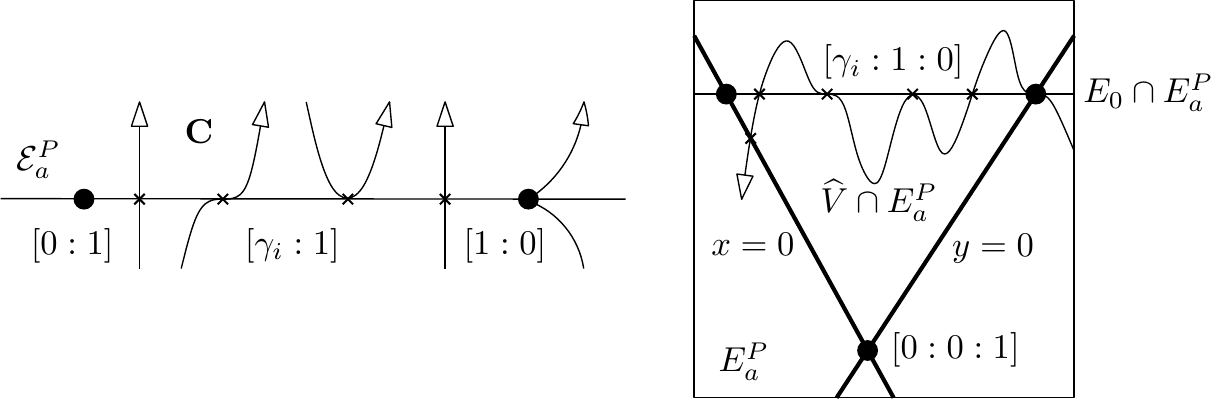}
\caption{Stratification of $\E_a^{P}$ and $E_a^{P}$.}
\label{strata_EaP}
\end{figure}

\begin{lemma}\label{euler_char_SIS}
Using the previous notation, the Euler characteristic and the multiplicity of $\check{E}_0$ are
$$
\chi (\check{E}_0) = \chi (\P^2 \setminus {\bf C}), \qquad m(\check{E}_0) = m.
$$

For the rest of strata of $\check{E}_{a}^{P}$, let us fix a point $P \in \Sing({\bf C})$. Then, one has that
$$
\chi ( \check{E}_{a,j}^{P} ) = \begin{cases} 1 & a = 1,\ j=xy \\ 0 & a \neq 1,\ j = xy \\ - \chi(\check{\E}_{a,j}^{P}) & \forall a,\ j=1,x,y\,; \end{cases}
$$
$$
\chi ( \check{\E}_{a,j}^{P} ) \neq 0 \ \Longrightarrow \ m( \check{E}_{a,j}^{P} ) = \begin{cases} m+1 & a = 1,\ j=xy \\
m(\check{\E}_{a,j}^{P}) \cdot (m+1) & \forall a,\ j=1,x,y. \end{cases}
$$
In fact, $\forall a \in S(\Gamma_{+}^{P})$, $a \neq 1$, the stratum $\check{E}^{P}_{a,xy}$ is empty and, in particular, its Euler characteristic is zero.
\end{lemma}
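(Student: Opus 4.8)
The plan is to obtain every Euler characteristic from the additivity and multiplicativity of the Euler characteristic (equivalently $\chi_c$, which agrees with $\chi$ on complex algebraic varieties) under the projection of $E_a^P$ from its $z$-vertex, and to read off every multiplicity directly from Definition~\ref{def_mult_1} using the charts of Proposition~\ref{step_b_SIS}. First I would treat $\check{E}_0$. Since $E_0 \cong \P^2$ and, by Proposition~\ref{step_b_SIS}(3), the only components of the total transform meeting $E_0$ are $\widehat{V}$ along $\mathbf{C}$ and the divisors $E_a^P$ along $\E_a^P$, the blow-down $E_0 \to \P^2$ restricts to an isomorphism $E_0 \setminus \bigcup_{a,P} \E_a^P \xrightarrow{\ \sim\ } \P^2 \setminus \Sing(\mathbf{C})$ carrying $\widehat{V}\cap E_0$ onto the smooth part of $\mathbf{C}$; removing it identifies $\check{E}_0$ with $\P^2 \setminus \mathbf{C}$, so $\chi(\check{E}_0)=\chi(\P^2\setminus\mathbf{C})$. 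A generic point of $\check{E}_0$ is a smooth point of $X$, hence the group is trivial, $L=1$, and $m(\check{E}_0)=m$ from $\pi_0^{*}(V)=\widehat{V}+mE_0$.

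For the strata on $E_a^P \cong \P^2_{(p_a^P,q_a^P,\nu_a^P)}/\mu_e$ with $j=1,x,y$, I would use the $\mu_e$-equivariant projection $[x:y:z]\mapsto[x:y]$ from the vertex $[0:0:1]$ onto $\{z=0\}=\E_a^P$, which descends to an algebraic map whose fibre over $[x_0:y_0]$ is the affine $z$-line. By Proposition~\ref{step_b_SIS}(4), $E_0=\{z=0\}$ meets this fibre at $z=0$ and $\widehat{V}=\{z+H(x,y)=0\}$ at $z=-H(x_0,y_0)$, where $H$ is the relevant initial $(p_a^P,q_a^P)$-form, while the lateral divisors meet $E_a^P$ only along $\{x=0\}\cup\{y=0\}$. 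Hence over $\check{\E}_{a,j}^P$, where $H\neq 0$, the fibre of $\check{E}_{a,j}^P$ is $\C$ minus two distinct points ($\chi=-1$), whereas over $\mathbf{C}\cap\E_a^P$, where the two points collide on $\{z=0\}$, it is $\C^{\ast}$ ($\chi=0$); the transversality at $[0:1]$ and $[1:0]$ recorded in Remark~\ref{remark_acampo_key} guarantees the described behaviour for $j=x,y$. Summing the fibrewise contributions yields $\chi(\check{E}_{a,j}^P)=-\chi(\check{\E}_{a,j}^P)$.

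The vertex stratum $j=xy$ I would settle by purely combinatorial means. By Proposition~\ref{step_b_SIS}(6) the strict transform $\widehat{V}$ never passes through $[0:0:1]$, so it never deletes the vertex; when $a\neq 1$ the centre $P_a$ lies on some previous exceptional curve $\E_{a'}^P$, so by Proposition~\ref{step_b_SIS}(2) one of the lines $\{x=0\},\{y=0\}$ equals $E_a^P\cap E_{a'}^P$ and therefore $[0:0:1]\in E_{a'}^P$, forcing $\check{E}_{a,xy}^P=\emptyset$ and $\chi=0$; when $a=1$ the centre is an isolated point of $\Sing(\mathbf{C})$, no other component passes through $[0:0:1]$, and $\check{E}_{1,xy}^P=\{[0:0:1]\}$ has $\chi=1$.

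Finally, for the multiplicities I would invoke Definition~\ref{def_mult_1}: the coefficient of $E_a^P$ in $\rho^{*}(V,0)$ is $(m+1)m_a^P$, and the integer $L$ depends only on the cyclic orders and on the weight of the coordinate cutting out $E_a^P$. Comparing the three $3$-dimensional charts of Proposition~\ref{step_b_SIS} with the two $2$-dimensional charts, these data agree with those of the tangent-cone resolution, so $L$ is unchanged and $m(\check{E}_{a,j}^P)=(m+1)\,m(\check{\E}_{a,j}^P)$ for $j=1,x,y$; for $a=1$, $j=xy$ the vertex has type $X(\nu_1;p_1,q_1,-1)$, where the $E_1$-coordinate $z$ carries weight $-1$, so $L=\nu_1$ and $m(\check{E}_{1,xy}^P)=(m+1)\nu_1/\nu_1=m+1$. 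I expect the multiplicity bookkeeping to be the main obstacle: one must track the (generally non-normalized) quotient types along $\{x=0\}$, $\{y=0\}$ and at $[0:0:1]$, and use Remark~\ref{remark_acampo_key} both to ensure the multiplicities are integral and to match the relevant greatest common divisors between dimensions $2$ and $3$.
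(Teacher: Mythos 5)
Your argument follows the paper's proof almost step by step: the paper also computes $\chi(\check{E}_{a,1}^P)$ via the projection $[x:y:z]\mapsto[x:y]$ from the vertex $[0:0:1]$ onto $\{z=0\}\equiv\E_a^P$ (fibre $\C$ minus two points, $\chi=-1$), settles $j=x,y$ by the same case analysis on whether another component passes through $[0:1]$ or $[1:0]$, treats $j=xy$ by exactly your combinatorial argument, and obtains the multiplicities from Definition~\ref{def_mult_1} together with Remark~\ref{remark_acampo_key} (which gives $L=\frac{eq_b}{d}$ on the lateral strata, resp.~$L=\nu_1$ at the vertex of $E_1^P$), so that all ratios are $m+1$ as claimed.

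The one genuine omission is the reduction that the paper makes its very first move. The strata $\check{E}_{a,j}^P$ are defined in the \emph{final} resolution $X$, whereas all of your chart computations take place at the moment $E_a^P$ is created. In $X$ the divisor $E_a^P$ has been further blown up at the non-transversal points of $\widehat{V}\cap E_0\cap E_a^P$, and each later divisor $E_c^P$ meets $E_a^P$ along a curve that is \emph{not} contained in $\{x=0\}\cup\{y=0\}$; so your assertion that ``the lateral divisors meet $E_a^P$ only along $\{x=0\}\cup\{y=0\}$'' is false as written, and the projection from the vertex is not even defined on the final $E_a^P$. The repair is the paper's opening observation: every later blow-up centre lies in the triple-point locus $\widehat{V}\cap E_0\cap E_a^P\equiv{\bf C}\cap\E_a^P$, which is already contained in the set removed when forming $\check{E}_{a,j}^P$; hence the blow-down from the final $E_a^P$ to $\P^2_{(p_a^P,q_a^P,\nu_a^P)}/\mu_e$ restricts to an isomorphism on each stratum, and one may legitimately ``travel back in the history of the resolution'' and compute everything at birth, as you do. You apply exactly this care to $E_0$ (via its blow-down to $\P^2$), but it is needed --- and missing --- for the divisors $E_a^P$ as well.
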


\begin{proof}
Let $E$ be an irreducible component of the exceptional divisor of $\rho$. Let us travel back in the history of the resolution until the time when $E$ first appears. Consider the space defined by $E$ minus the intersections with the other components at that moment.

Since all the weighted blow-ups have center in that intersections, this space is naturally isomorphic to~$\check{E}$. Using these arguments, we will perform the calculations of the Euler characteristics at the moment when the component appears in the history of the resolution.

For $E_0$, the space $\check{E}_0$ is isomorphic to $E_0 \setminus ( \widehat{V} \cap E_0 )$ which is identified with~$\P^2 \setminus {\bf C}$; its multiplicity is $m$, see Figure~\ref{fig_step_zero} and discussion before Lemma~\ref{step_zero}.


For the rest of the proof the cases $j = 1, x, y, xy$ are treated separately. Let us fix a point $P \in \Sing({\bf C})$ and omit the index ``$P$'' to simplify the notation.

Recall that $E_a = \P^2_{(p_a,q_a,\nu_a)} / \mu_e$, see Proposition~\ref{step_b_SIS}(1). Also Figure~\ref{strata_EaP} will be useful.

\begin{itemize}
\item \underline{$j = xy$}\,:
\end{itemize}

The stratum $E_{a,xy}$ is the point $[0:0:1] \in E_a$. By Proposition~\ref{step_b_SIS}, it belongs to just one divisor if and only if $a \neq 1$, see Lemma~\ref{step1} and its proof. This implies that $\chi( \check{E}_{1,xy} )=1$ and that
$$
  \chi ( \check{E}_{a,xy} ) = 0,
  \qquad \forall a \in S(\Gamma_{+}) \setminus \{ 1 \}.
$$
Following Definition~\ref{def_mult_1}, the multiplicity of $\check{E}_{1,xy}$ is $\frac{(m+1) \nu_1}{\nu_1}$, since the origin $[0:0:1] \in E_1$ is a cyclic quotient singular point of order $\nu_1$, see Lemma~\ref{step1}.

\begin{itemize}
\item \underline{$j = x$}\,:
\end{itemize}

The stratum $E_{a,x}$ is the line $\{ x = 0 \} \subset E_a$. If there is another component of the divisor that passes through $\E_{a,x} = [0:1] \in \E_a$, then one has $\check{\E}_{a,x} = \emptyset$, and either $\check{E}_{a,x} = E_{a,x} \setminus \{  \text{$2$ points} \}$ or $\check{E}_{a,x} = \emptyset$. Otherwise, $\check{\E}_{a,x} = [0:1]$ and $\check{E}_{a,x} = E_{a,x} \setminus \{ \text{$3$ points} \}$, see second part of Proposition~\ref{step_b_SIS}(4). In the case when the Euler characteristic is different from zero, by Remark~\ref{remark_acampo_key}, the multiplicity is
$$
  m( \check{E}_{b,x} ) = \frac{(m+1) m_b}{\gcd(\frac{e q_b}{d}, \frac{\nu_b + t'' q_b}{d} )} = \frac{(m+1) m_b}{\frac{e q_b}{d}} = (m+1) m( \check{\E}_{b,x} ).
$$

The case $j=y$ is exactly the same as $j=x$.

\begin{itemize}
\item \underline{$j = 1$}\,:
\end{itemize}

Consider the projection of $E_{a} \setminus E_{a,xy}$ onto the line $\{ z = 0 \} \equiv \E_{a}$. This map is identified with the morphism
\begin{eqnarray*}
\tau\,: \, \P^2_{(p_a,q_a,\nu_a)}(e; r,s,t) \setminus \{ [0:0:1] \} & \longrightarrow & \P^1_{(p_a,q_a)}(e;r,s), \\
\,[x:y:z] & \mapsto & [x:y].
\end{eqnarray*}
Note that the restriction $\tau|: \check{E}_{a,1} \to \check{\E}_{a,1}$ is a fibration with fiber isomorphic to $\C \setminus \{ \text{$2$ points} \}$ and hence $\chi (\text{fiber}) = -1$.

The multiplicity of the smooth part is  $(m+1) m_a$ in the superisolated singularity while it is $m_a$ in the tangent cone.

To finish observe that in any case, one has that $\chi ( \check{E}_{a,j} ) = - \chi ( \check{\E}_{a,j} )$ and, if they are different from zero, $m( \check{E}_{a,j} ) = (m+1) m( \check{\E}_{a,j} )$. Now the proof is complete.
\end{proof}

\begin{remark}
The Euler characteristic of the complement of a projective plane curve in $\P^2$ is known to be
$$
\chi(\P^2 \setminus {\bf C}) = (m^2 - 3m + 3) - \sum_{P \in \Sing({\bf C})} \mu_P,
$$
see~\cite{Esnault82}, or~\cite{Artal94b} for an elementary proof based on the additivity of the Euler characteristic.
\end{remark}

\begin{theo}\label{char_poly_SIS}
The characteristic polynomial of the complex monodromy of $(V,0)$ is
$$
  \Delta_{(V,0)}(t) = \frac{(t^m-1)^{\chi(\P^2 \setminus {\bf C})}}{t-1}
  \prod_{P\in \Sing({\bf C})} \Delta_{({\bf C},P)} (t^{m+1}),
$$
where $\Delta_{({\bf C},P)}(t)$ denotes the characteristic polynomial of the local complex monodromy of $({\bf C},P)$. \end{theo}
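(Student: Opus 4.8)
The plan is to apply the generalized A'Campo formula (Theorem~\ref{ATH2}) to the embedded $\Q$-resolution $\rho$ of $(V,0)$ constructed in Proposition~\ref{step_b_SIS}, feeding in the Euler-characteristic and multiplicity data collected in Lemma~\ref{euler_char_SIS}. Since $(V,0)\subset(\C^3,0)$ we have $n=2$, so $(-1)^n=+1$ and the formula reads
$$
\Delta_{(V,0)}(t) = \frac{1}{t-1}\prod_{i,j}\bigl(t^{m(\check{E}_{i,j})}-1\bigr)^{\chi(\check{E}_{i,j})},
$$
the product ranging over all quotient strata of the exceptional components (the first blow-up divisor and the $E_a^P$); the strict transform $\widehat{V}$ plays the role of ``$E_0=\widehat{H}$'' in Theorem~\ref{ATH2} and contributes nothing. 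The first step is to split this product according to the type of stratum.

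The stratum $\check{E}_0$ gives, by Lemma~\ref{euler_char_SIS}, the factor $(t^m-1)^{\chi(\P^2\setminus{\bf C})}$, which combined with the global $\tfrac{1}{t-1}$ already yields the prefactor $\tfrac{(t^m-1)^{\chi(\P^2\setminus{\bf C})}}{t-1}$. Next, for each $P\in\Sing({\bf C})$ the point-stratum $\check{E}_{1,xy}^P$ has Euler characteristic $1$ and multiplicity $m+1$, contributing $(t^{m+1}-1)$, while every other $xy$-stratum has Euler characteristic $0$ and contributes trivially. The remaining strata $\check{E}_{a,j}^P$ with $j=1,x,y$ satisfy $\chi(\check{E}_{a,j}^P)=-\chi(\check{\E}_{a,j}^P)$ and, whenever this is nonzero, $m(\check{E}_{a,j}^P)=(m+1)\,m(\check{\E}_{a,j}^P)$, so each contributes $\bigl(t^{(m+1)m(\check{\E}_{a,j}^P)}-1\bigr)^{-\chi(\check{\E}_{a,j}^P)}$.

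To close the argument I would recognize each per-point product as a local characteristic polynomial. Applying A'Campo to the germ $({\bf C},P)\subset(\C^2,0)$, where now $n=1$ so the outer exponent is $-1$, and substituting $t\mapsto t^{m+1}$, gives
$$
\Delta_{({\bf C},P)}(t^{m+1}) = (t^{m+1}-1)\prod_{a,j}\bigl(t^{(m+1)m(\check{\E}_{a,j}^P)}-1\bigr)^{-\chi(\check{\E}_{a,j}^P)}.
$$
Hence the second and third groups, collected over a fixed $P$, reproduce exactly $\Delta_{({\bf C},P)}(t^{m+1})$, and taking the product over all $P\in\Sing({\bf C})$ yields the stated identity.

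The only genuinely conceptual point, as opposed to bookkeeping, is the cancellation of two sign changes. Passing from the curve to the surface flips the global exponent of the A'Campo bracket from $-1$ to $+1$, whereas the fibration $\tau$ with fiber $\C\setminus\{\text{$2$ points}\}$ (of Euler characteristic $-1$) appearing in the proof of Lemma~\ref{euler_char_SIS} flips $\chi(\check{\E}_{a,j}^P)$ to $-\chi(\check{\E}_{a,j}^P)$. These two flips precisely cancel, so the exponents agree once one performs the substitution $t\mapsto t^{m+1}$ dictated by the multiplicity scaling $m(\check{E}_{a,j}^P)=(m+1)\,m(\check{\E}_{a,j}^P)$; checking that this substitution is compatible for \emph{all three} groups simultaneously (including the $(t^{m+1}-1)$ produced by the $xy$-strata, which supplies exactly the $\tfrac{1}{t^{m+1}-1}$ factor hidden in the inverted curve bracket) is where I expect the bulk of the verification to lie.
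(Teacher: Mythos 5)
Your proposal is correct and follows essentially the same route as the paper's own proof: apply the generalized A'Campo formula (Theorem~\ref{ATH2}) to the $\Q$-resolution of $(V,0)$, feed in the data of Lemma~\ref{euler_char_SIS}, and recognize each per-point product (the $(t^{m+1}-1)$ from $\check{E}_{1,xy}^P$ together with the $j=1,x,y$ factors) as the curve-level A'Campo formula evaluated at $t^{m+1}$. Your closing observation about the cancellation of the two sign flips --- the $(-1)^n$ exponent passing from $n=1$ to $n=2$ against the relation $\chi(\check{E}_{a,j}^P)=-\chi(\check{\E}_{a,j}^P)$ --- is exactly the mechanism implicit in the paper's computation.
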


\begin{proof}
Given a point $P \in \Sing({\bf C})$, let us compute the characteristic polynomial of $({\bf C},P)$. Its total transform is
$$
(\varrho^{P})^{*}({\bf C},P) = \widehat{{\bf C}} + \sum_{a \in S(\Gamma_{+}^{P})} m_a^{P} \mathcal{E}_a^{P},
$$
and the stratification associated with each exceptional divisor needed for applying A'Campo's formula is $\check{\mathcal{E}}_a = \check{\mathcal{E}}_{a,1} \sqcup \check{\mathcal{E}}_{a,x} \sqcup \check{\mathcal{E}}_{a,y}$. Then, by Theorem~\ref{ATH2},
\begin{equation}\label{formula_AC_tangent_cone}
\Delta_{({\bf C},P)} (t) = ( t - 1) \prod_{\begin{subarray}{c} a \in S(\Gamma_{+}^{P}) \\ j = 1,\, x,\, y \end{subarray}} ( t^{m(\check{\mathcal{E}}_{a,j}^P)} - 1 )^{-\chi(\check{\mathcal{E}}_{a,j}^P)}.
\end{equation}

Let us see the situation in the superisolated singularity $(V,0)$. The total transform is
$$
  \rho^{*}(V,0) = \widehat{V} + m E_0 + \sum_{\begin{subarray}{c} P \in \Sing({\bf C}) \\ a \in S(\Gamma_{+}^{P})\end{subarray}} (m+1) m_a^{P} E_{a}^{P},
$$
and the corresponding stratification is $\check{E}_{a}^{P} = \check{E}_{a,1}^{P} \sqcup \check{E}_{a,x}^{P} \sqcup \check{E}_{a,y}^{P} \sqcup \check{E}_{a,xy}^{P}$.

By Theorem~\ref{ATH2}, the characteristic polynomial of $(V,0)$ is
\begin{equation}\label{formula_AC_SIS}
\Delta_{(V,0)}(t) = \frac{1}{t-1} ( t^{m(\check{E}_0 )}-1 )^{\chi(\check{E}_0)} \prod_{\begin{subarray}{c} P \in \Sing({\bf C}) \\ a \in S(\Gamma_{+}^{P}) \\ j = 1,\, x,\, y,\, xy \end{subarray}} ( t^{m( \check{E}_{a,j}^P)} - 1 )^{\chi(\check{E}_{a,j}^P)}.
\end{equation}

From Lemma~\ref{euler_char_SIS}, $m(\check{E}_0) = m$ and $\chi(\check{E}_0) = \chi(\P^2 \setminus {\bf C})$, and the latter can be computed combinatorially as indicated in the statement. Let us calculate the contribution of the preceding product for a given point $P \in \Sing({\bf C})$.

Again using Lemma~\ref{euler_char_SIS} and, in particular, the fact that $a \neq 1$ implies $\chi(\check{E}_{a,xy}^{P}) = 0$, one has that
\begin{align*}
& \prod_{\begin{subarray}{c} a \in S(\Gamma_{+}^{P}) \\ j = 1,\, x,\, y,\, xy \end{subarray}} \left( t^{m(\check{E}_{a,j}^P)} - 1 \right)^{\chi(\check{E}_{a,j}^P)} = \\
& \qquad = \underbrace{\left( t^{m(\check{E}_{1,xy}^P)} - 1 \right)^{\chi(\check{E}_{1,xy}^P)}}_{a=1,\ \ j=xy} \prod_{\begin{subarray}{c} a \in S(\Gamma_{+}^{P}) \\ j = 1,\, x,\, y \end{subarray}} \left( t^{m(\check{E}_{a,j}^P)} - 1 \right)^{\chi(\check{E}_{a,j}^P)} \\
& \qquad = \left( t^{m+1} - 1 \right)^1 \prod_{\begin{subarray}{c} a \in S(\Gamma_{+}^{P}) \\ j = 1,\, x,\, y \end{subarray}} \left( t^{(m+1) m(\check{\mathcal{E}}_{a,j}^P)} - 1 \right)^{-\chi(\check{\mathcal{E}}_{a,j}^P)}.
\end{align*}

By~\eqref{formula_AC_tangent_cone}, the last expression is equal to $\Delta_{({\bf C},P)}(t^{m+1})$ and hence~(\ref{formula_AC_SIS}) is exactly the formula of the statement.
\end{proof}
\begin{remark}
Note that the first part of $\Delta(t)$ is closely related to the zeta function of the tangent cone $f_m(x,y,z)$ regarded as an function on~$\C^3$. In fact,
$
  Z(f_m: \C^3 \to \C;\, t) = (1-t^m)^{ \chi(\P^2 \setminus {\bf C}) }.
$
This is a consequence of the fact that the monodromy zeta function of a homogeneous polynomial of degree $d$ is $Z(t) = (1-t^d)^{\chi(F)/d}$, where~$F$ is the corresponding Milnor fiber.
\end{remark}

\begin{cor}\label{cor_char_poly_SIS}
The Milnor number of a SIS can be expressed in terms of the Milnor numbers of the singular points of the tangent cone, namely
$$
\mu(V,0) = (m-1)^3 + \sum_{P\in \Sing ({\bf C})} \mu ({\bf C},P).
$$
\end{cor}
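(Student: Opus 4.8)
The plan is to read off the Milnor number from the characteristic polynomial already computed in Theorem~\ref{char_poly_SIS} by a pure degree count, rather than re-running A'Campo's formula from scratch. The key observation is that for an isolated hypersurface singularity the Milnor fibre $F$ is homotopy equivalent to a bouquet of $\mu$ spheres of the middle dimension $n$, so $\Delta(t)$ is essentially the characteristic polynomial of the monodromy on $\widetilde{H}^n(F)\cong \C^{\mu}$. Equivalently, comparing the two displays in Theorem~\ref{ATH2}, the \emph{total degree} of $\Delta$ — the degree of the numerator minus the degree of the denominator of the rational expression — equals $(-1)^n\big[-1+\sum_{i,j} m_{i,j}\,\chi(\check{E}_{i,j})\big]=\mu$. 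First I would record this for both families in play: $\deg\Delta_{(V,0)}=\mu(V,0)$ (here $n=2$) and $\deg\Delta_{({\bf C},P)}=\mu({\bf C},P)$ (here $n=1$).

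Next I would take total degrees on both sides of the factorisation in Theorem~\ref{char_poly_SIS}. Total degree is additive over products and quotients, and is multiplied by $m+1$ under the substitution $t\mapsto t^{m+1}$, so $\deg\Delta_{({\bf C},P)}(t^{m+1})=(m+1)\,\mu({\bf C},P)$. Since the left-hand side is a genuine polynomial of degree $\mu(V,0)$, equating degrees gives
$$
\mu(V,0)=m\,\chi(\P^2\setminus{\bf C})-1+(m+1)\sum_{P\in\Sing({\bf C})}\mu({\bf C},P).
$$

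Finally I would substitute the combinatorial expression $\chi(\P^2\setminus{\bf C})=(m^2-3m+3)-\sum_{P}\mu({\bf C},P)$ from the remark following Lemma~\ref{euler_char_SIS}. The contribution $-m\sum_P\mu({\bf C},P)$ coming from $m\,\chi(\P^2\setminus{\bf C})$ cancels against all but one copy of $(m+1)\sum_P\mu({\bf C},P)$, leaving a single $\sum_P\mu({\bf C},P)$, while the purely numerical part collapses as $m(m^2-3m+3)-1=m^3-3m^2+3m-1=(m-1)^3$. This is exactly the asserted identity.

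The only delicate point is the degree bookkeeping. One must treat $\Delta$ as a rational function and check that the "numerator degree minus denominator degree" count is additive and transforms correctly under $t\mapsto t^{m+1}$, even though the exponent $\chi(\P^2\setminus{\bf C})$ may be negative, so that $(t^m-1)^{\chi(\P^2\setminus{\bf C})}$ formally sits in the denominator. This is harmless: the identity of Theorem~\ref{char_poly_SIS} holds as an equality of rational functions, and its left-hand side is a polynomial, so the ordinary degree of $\Delta_{(V,0)}$ coincides with the virtual degree of the right-hand side. No new geometric input is needed beyond Theorem~\ref{char_poly_SIS} and the Euler characteristic formula for $\P^2\setminus{\bf C}$. (As a consistency alternative, one could instead feed the Euler characteristics and multiplicities of Lemma~\ref{euler_char_SIS} directly into the Milnor number formula of Theorem~\ref{ATH2}, but the degree argument above is shorter since it reuses the already-established characteristic polynomial.)
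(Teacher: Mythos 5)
Your proposal is correct and is essentially the paper's own proof: the paper likewise identifies $\mu(V,0)$ with $\deg\Delta_{(V,0)}(t)$, takes degrees across the factorisation of Theorem~\ref{char_poly_SIS} (picking up the factor $m+1$ from $t\mapsto t^{m+1}$), substitutes $\chi(\P^2\setminus{\bf C})=m^2-3m+3-\sum_P\mu_P$, and simplifies to $(m-1)^3+\sum_P\mu({\bf C},P)$. Your extra care about treating the right-hand side as a rational function when $\chi(\P^2\setminus{\bf C})$ might be negative is a sound clarification of a point the paper leaves implicit, but it does not change the argument.
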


\begin{proof}
The Milnor number coincides with the degree of the characteristic polynomial. Then,
\begin{align*}
\deg \Delta(t) &= m \big(m^2 - 3m + 3 - \sum_P \mu_P \big) - 1 + \sum_P \underbrace{\deg \Delta_P(t)}_{\mu_P} (m+1) \\
&= m^3 - 3 m^2 + 3m - m \sum_P \mu_P -1 + (m+1) \sum_P \mu_P \\
& = (m-1)^3 + \sum_P \mu_P. 
\end{align*}
Above, the sums are taken over $P \in \Sing({\bf C})$.
\end{proof}

\section{Yomdin-L\^{e} Surface Singularities}\label{sec_preparations_YS}

The family of singularities studied above can be generalized as follows. Let $f = f_{m} + f_{m+k} + \cdots \in \C \{x,y,z\}$ be the decomposition of $f$ into its homogeneous parts, $k \geq 1$. Denote $V := V(f) \subset \C^3$ and ${\bf C} := V(f_m) \subset \P^2$. Then, the germ $(V,0)$ is said to be a {\em Yomdin-L\^{e} surface singularity} (YLS) if the condition $\Sing({\bf C}) \cap V(f_{m+k}) = \emptyset$ holds in $\P^2$.

\vspace{0.15cm}

The main difficulty in finding a (usual) embedded resolution of this kind of singularities is that after several blow-ups at points and rational curves, following the ideas of~\cite{Artal94}, one eventually obtains a branch of resolutions depending on $k$. Thus the study of this singularities by using these tools seem to be very long and tedious.

\vspace{0.15cm}

However, an embedded $\Q$-resolution of $(V,0)$ can be computed exactly as for SIS, i.e.~by means of weighted blow-ups at points. In fact, this is the main purpose of Section~\ref{complete_description_YS}. As an application, the characteristic polynomial and the Milnor number are calculated using Theorem~\ref{ATH2}. Again, the weights at each step can be chosen so that every exceptional divisor in the $\Q$-resolution, except perhaps the first one $E_0$, contributes to its monodromy. 

\vspace{0.15cm}

In order not to repeat the same arguments, the proofs of this section are sketched, commented, or simply omitted. Moreover, they are presented following the same structure as in previous sections so that one can easily compares the corresponding results with the SIS. In the discussion, one usual thinks that $k \neq 1$, since otherwise $(V,0)$ is a SIS.

\vspace{0.15cm}

We start the embedded $\Q$-resolution of $(V,0)$ with the usual blow-up at the origin $\pi_0: \widehat{\C}^3 \to \C^3$. The total transform is the divisor $\pi_0^{*}(V) = \widehat{V} + m E_0$, where $\widehat{V}$ is the strict transform and $E_0$ is the exceptional divisor. The intersection $\widehat{V}\cap E_0$ is identified with the tangent cone of the singularity, see Figure~\ref{fig_step_zero_YS}.

\begin{figure}[h t]
\centering
\includegraphics{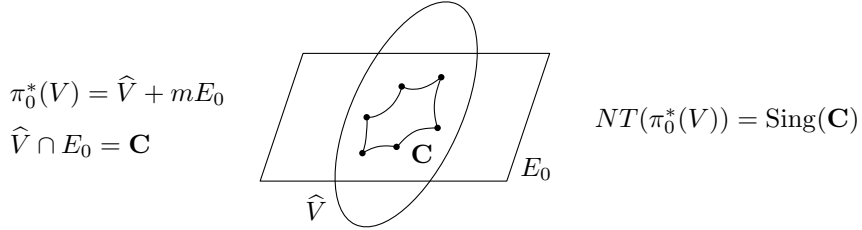}
\caption{Step 0 in the embedded $\Q$-resolution of $(V,0)$.}
\label{fig_step_zero_YS}
\end{figure}

Let us consider $P\in \widehat{V}\cap E_0 = {\bf C}$. After linear change of coordinates we can assume that $P = ((0,0,0),[0:0:1]) \equiv [0:0:1] \in {\bf C}$. Take a chart of $\widehat{\C}^3$ around $P$ where $z=0$ is the equation of $E_0$ and the blowing-up takes the form
$$
  (x,y,z) \stackrel{\pi_0}{\longmapsto} (x z, y z, z).
$$

Then, the equation of $\widehat{V}$ is
$$
  \widehat{V} :\quad f_{m}(x,y,1) + z^k \Big[ f_{m+k}(x,y,1) + z f_{m+k+1}(x,y,1) +
  \cdots \Big] = 0.
$$

Two cases arise: if $P$ is smooth in the tangent cone, then $\widehat{V}$ is also smooth at $P$ and the intersection with $E_0$ at that point is transverse; otherwise, i.e.~$P\in \Sing({\bf C})$, the YLS condition $\Sing({\bf C})\cap V(f_{m+k}) = \emptyset$ implies that the previous expression in brackets is a unit in the local ring $\C \{x,y,z \}$ and $\widehat{V}$ is not smooth at $P$ (unless $k=1$). Now the order of $f_m(x,y,1)$ is greater than or equal to 2 and the intersection $\widehat{V}\cap E_0$ is not transverse at $P$.


We summarize the previous discussion in the following result, which is the step zero in our $\Q$-resolution of $(V,0)$.

\begin{lemma}[Step 0]\label{step_zero_YS}
Let $P\in {\bf C}$. The surfaces $\widehat{V}$ and $E_0$ intersect transversely at $P$ if and only if $P$ is a smooth point in ${\bf C}$. Otherwise, i.e.~$P\in \Sing({\bf C})$, there exist local analytic coordinates around $P$ such that the equations of the exceptional divisor and the strict transform are
\begin{align*}
E_0 & : \quad z = 0\,;\\
\widehat{V} & : \quad z^k + h(x,y) = 0\,,
\end{align*}
where $h(x,y)=0$ is an equation of ${\bf C}$ and its order is at least 2.
\end{lemma}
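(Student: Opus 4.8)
The plan is to carry out the substitution that precedes the statement and then, in the singular case, to reach the normal form by a single analytic change of the $z$-coordinate; the smooth case will follow by inspecting linear parts. First I would record the local equation of the total transform in the chart $(x,y,z)\stackrel{\pi_0}{\mapsto}(xz,yz,z)$. Using that each $f_i$ is homogeneous of degree $i$, one has $f(xz,yz,z)=\sum_i z^i f_i(x,y,1)$, so that
$$
f\circ\pi_0 = z^m\big[\,f_m(x,y,1) + z^k\big(f_{m+k}(x,y,1) + z f_{m+k+1}(x,y,1)+\cdots\big)\big].
$$
The factor $z^m$ accounts for the divisor $m E_0$, and the bracket is the local equation of $\widehat{V}$. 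Writing $h(x,y):=f_m(x,y,1)$, which is an equation of ${\bf C}$ in this chart, and $u:=f_{m+k}(x,y,1)+z f_{m+k+1}(x,y,1)+\cdots$, the strict transform is cut out by $h(x,y)+z^k u(x,y,z)=0$.

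For the smooth case I would argue via linear parts. If $P$ is a smooth point of ${\bf C}$, then $h$ has a nonzero linear part $ax+by$ with $(a,b)\neq(0,0)$, and since the term $z^k u$ contributes nothing to the $(x,y)$-linear part, the local equation of $\widehat{V}$ has linear part $ax+by+cz$ for some $c\in\C$, which is not proportional to $z$. Hence $\widehat{V}$ is smooth at $P$ with tangent plane different from $E_0=\{z=0\}$, so the two surfaces meet transversely. Conversely, if the intersection is transverse at $P$, then $\widehat{V}$ is smooth and $\widehat{V}\cap E_0=\{z=0,\,h=0\}\cong({\bf C},P)$ is a smooth curve, forcing $P$ to be a smooth point of ${\bf C}$; this is the contrapositive of the singular case treated next.

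For $P\in\Sing({\bf C})$ the decisive point is the Yomdin-L\^{e} hypothesis $\Sing({\bf C})\cap V(f_{m+k})=\emptyset$, which gives $u(0)=f_{m+k}(0,0,1)\neq 0$, so $u$ is a unit in $\C\{x,y,z\}$. Because $\C$ is algebraically closed and $u(0)\neq 0$, the unit $u$ admits an analytic $k$-th root $v$, i.e.\ $u=v^k$ with $v(0)\neq 0$. Then $(x,y,z)\mapsto(x,y,z\,v(x,y,z))$ is a local biholomorphism, its Jacobian at the origin being $v(0)\neq 0$, and it fixes $x,y$; so setting $\tilde{z}:=z\,v$ turns the equation of $\widehat{V}$ into $\tilde{z}^{\,k}+h(x,y)=0$, the asserted form. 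Here $h$ defines ${\bf C}$ and has order at least $2$ precisely because $P$ is singular on ${\bf C}$. Since $\widehat{V}\cap E_0=\{\tilde{z}=0,\,h=0\}$ is then singular at $P$ (and, when $k=1$, $\widehat{V}$ is smooth but tangent to $E_0$), the intersection is non-transverse, which closes the equivalence.

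The main obstacle is exactly this coordinate change in the singular case: one must extract an analytic $k$-th root of the unit $u$ and verify that multiplying $z$ by it is a genuine biholomorphism fixing $x$ and $y$, so that $E_0=\{z=0\}$ and the identification $\widehat{V}\cap E_0\cong{\bf C}$ are both preserved. Everything else is the substitution already displayed before the statement, obtained from the SIS computation of Lemma~\ref{step_zero} by replacing the single factor $z$ (the case $k=1$) with $z^k$.
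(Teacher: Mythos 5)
Your proposal is correct and takes essentially the same approach as the paper: the same chart computation yielding $\widehat{V}: f_m(x,y,1) + z^k\big[f_{m+k}(x,y,1)+\cdots\big]=0$, with the Yomdin-L\^{e} hypothesis making the bracket a unit, and the smooth/singular dichotomy read off from the order of $f_m(x,y,1)$. The only difference is that you spell out the extraction of the analytic $k$-th root of the unit and the resulting biholomorphism $(x,y,z)\mapsto(x,y,zv)$, a step the paper leaves implicit when asserting the normal form $z^k+h(x,y)=0$.
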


\begin{remark}
Observe that the main difference at this stage is that $\widehat{V}$ is not smooth at the singular points of the tangent cone and its equation at those points has $z^k$ as one of its terms.
\end{remark}

\section{Embedded \textbf{Q}-Resolution for YLS}\label{complete_description_YS}

After the step zero $NT(\pi_0^{*} (V))$ is identified with $\Sing({\bf C})$. The next step in the $\Q$-resolution of $(V,0)$ is to blow up those points. Let us fix $P \in \Sing({\bf C})$ and consider local coordinates as in Lemma~\ref{step_zero_YS}. The idea is to choose suitable weights so that the strict transform of $\widehat{V}$ has again an equation of the same form, namely $z^k + H(x,y) = 0$.


Given an exceptional divisor in the tangent cone $\E_a$, $a \in S(\Gamma_{+})$, and $m_a$ its multiplicity, denote $k_a := \gcd(k,m_a)$. When $a=1$, then $m_1 = \nu_1$ and thus $k_1 = \gcd(k,\nu_1)$.

\begin{lemma}[Step 1]\label{step1_YS}
Let $(p_1,q_1)\in \N^2$ be two positive coprime numbers. Let $\varpi_1$ be the $(p_1,q_1)$-weighted blow-up at $P\in {\bf C}$. Denote by $\E_1$ its exceptional divisor and by $\nu_1$ the $(p_1,q_1)$-multiplicity of ${\bf C}$ at~$P$.

Consider $\pi_1$ the $\big( \frac{k p_1}{k_1}, \frac{k q_1}{k_1}, \frac{\nu_1}{k_1} \big)$-weighted blow-up at $P$ in dimension~$3$ and~$E_1$ the corresponding exceptional divisor. Then, the total transforms verify:
\begin{enumerate}
\item $\varpi_1^{*} ({\bf C}) = {\bf C} + \nu_1 \E_1$,
\item $\pi_1^{*} \pi^{*}_0 (V) = \widehat{V} + m E_0 + (m+k) \displaystyle \frac{\nu_1}{k_1} E_1$,
\item $NT(\pi_1^{*}\pi^{*}_0(V)) = NT(\varpi_1^{*}({\bf C}))$.
\end{enumerate}
\end{lemma}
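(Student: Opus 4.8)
The plan is to mirror the proof of Lemma~\ref{step1} (the case $k=1$), carrying the extra $z^k$ term through every computation. Statement~(1) requires nothing new: it is the defining property of the $(p_1,q_1)$-weighted blow-up in dimension~$2$, with $\nu_1 = \ord_{(p_1,q_1)} h$ by Example~\ref{blowup_dim2} and Proposition~\ref{formula_self-intersection}(2). Before anything else I would check that the proposed weight vector is primitive: since $\gcd(p_1,q_1)=1$ one has $\gcd(kp_1,kq_1,\nu_1)=\gcd(k,\nu_1)=k_1$, so dividing by $k_1$ yields $\gcd\!\left(\frac{kp_1}{k_1},\frac{kq_1}{k_1},\frac{\nu_1}{k_1}\right)=1$. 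This is the sole reason for the factor $k_1$, and it guarantees the type is normalized, so that the smooth three-chart description of Example~\ref{blowup_dim3_smooth} applies (the ambient space is still smooth $\C^3$ at this first blow-up after Step~0, hence $E_1=\P^2_{\w}$ with no quotient).

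The core of statement~(2) is a weighted-order computation. Writing $\w=\left(\frac{kp_1}{k_1},\frac{kq_1}{k_1},\frac{\nu_1}{k_1}\right)$ and recalling the local equation $z^m(z^k+h(x,y))$ of $\pi_0^{*}(V)$ from Lemma~\ref{step_zero_YS}, the key observation is that the two summands defining $\widehat V$ have the \emph{same} $\w$-order: $z^k$ has $\w$-order $k\cdot\frac{\nu_1}{k_1}$, while the lowest $(p_1,q_1)$-homogeneous part $h_{\nu_1}$ of $h$ has $\w$-order $\frac{k}{k_1}\nu_1$ as well, because the first two weights are $\frac{k}{k_1}$ times $(p_1,q_1)$. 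Hence $\nu_{\w}(z^k+h)=\frac{k\nu_1}{k_1}$, and the multiplicity of $E_1$ is the weighted sum $m\cdot\frac{\nu_1}{k_1}+1\cdot\frac{k\nu_1}{k_1}=(m+k)\frac{\nu_1}{k_1}$, which is~(2). This matching of orders is precisely the mechanism that keeps the strict transform in the form $z^k+H(x,y)=0$: substituting the chart maps of Example~\ref{blowup_dim3_smooth} into $z^k+h$ and factoring out the exceptional monomial leaves $z^k+H_1(x,y)$ in the first chart, with $H_1(0,y)=h_{\nu_1}(1,y)$, and symmetrically in the second, which is exactly what the induction of Step~$b$ will need.

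For statement~(3) I would write out the three charts explicitly, as in the table of Lemma~\ref{step1}. The divisor $mE_0+(m+k)\frac{\nu_1}{k_1}E_1$ is a union of coordinate hyperplanes, hence normal crossing, so only $\widehat V$ must be examined. Globally on $E_1=\P^2_{\w}$ the strict transform cuts out the $\w$-homogeneous curve $\{z^k+h_{\nu_1}(x,y)=0\}$, while $E_0\cap E_1=\{z=0\}$; their intersection $\{z=0,\ h_{\nu_1}=0\}$ lies on the line $\{z=0\}\cong\P^1_{(p_1,q_1)}\cong\E_1$ and is therefore identified with $\mathbf{C}\cap\E_1$, exactly as in dimension~$2$. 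The clean way to read off transversality is to view the projection $\widehat V\cap E_1\to\E_1$, $[x:y:z]\mapsto[x:y]$, as a $k$-fold cyclic cover branched over $\mathbf{C}\cap\E_1$; away from the branch points, where $z\neq0$, the relation $\partial_z(z^k+h_{\nu_1})=kz^{k-1}\neq0$ shows $\widehat V$ is smooth and meets $E_1$ transversally, so no non-transversality is created there.

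The hard part, and the genuine departure from the case $k=1$, is the locus lying over $\mathbf{C}\cap\E_1$. For $k=1$ the automorphism $z\mapsto z+h_1(x,y)$ used in Lemma~\ref{step1} straightens $\widehat V$ and keeps it smooth; for $k\geq2$ this is unavailable and $\widehat V$ genuinely acquires singularities. I would settle this by a Jacobian computation in the first chart: a singular point of $z^k+H_1$ forces $kz^{k-1}=0$, hence $z=0$, so $\widehat V$ is singular only along $\{z=0\}\cap\mathrm{Sing}\{H_1=0\}$, and $\{H_1=0\}$ is just the strict transform $\mathbf{C}$ of the tangent cone. On $\widehat V\cap E_0\cap E_1$ the gradient is $\big((H_1)_x,(H_1)_y,0\big)$, which by $H_1(0,y)=h_{\nu_1}(1,y)$ vanishes precisely at the multiple roots of $h_{\nu_1}(1,y)$, i.e.\ exactly where $\mathbf{C}$ fails to meet $\E_1$ transversally. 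Thus both the singular points of $\widehat V$ and the non-transverse triple points occur only above $NT(\varpi_1^{*}(\mathbf{C}))$, and conversely each such point of the tangent cone produces one. Invoking Remark~\ref{computation_local_number_V-surface} to compare the local intersection data on the quotient charts then gives $NT(\pi_1^{*}\pi_0^{*}(V))=NT(\varpi_1^{*}(\mathbf{C}))$, completing~(3).
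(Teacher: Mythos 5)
Your parts (1) and (2) are correct and essentially identical to the paper's proof: the primitivity check $\gcd\big(\tfrac{kp_1}{k_1},\tfrac{kq_1}{k_1},\tfrac{\nu_1}{k_1}\big)=1$, the observation that $z^k$ and $h_{\nu_1}$ acquire the same $\w$-order, and the resulting multiplicity $m\cdot\tfrac{\nu_1}{k_1}+k\cdot\tfrac{\nu_1}{k_1}=(m+k)\tfrac{\nu_1}{k_1}$ are exactly the computations in the paper. The gap is in part (3), at the triple points, and it is genuine. You reduce $NT$ to the singular locus of $\widehat{V}$ via the claim that the gradient $\big((H_1)_x,(H_1)_y,0\big)$ \emph{vanishes precisely} at the multiple roots of $h_{\nu_1}(1,y)$. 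Only one implication holds: vanishing forces a multiple root. The converse is false, and with it your assertion that ``each such point of the tangent cone produces one'' collapses. Concretely, take $h=y^2+x^3$ (a cusp), $(p_1,q_1)=(1,1)$, $k=2$; then $\nu_1=2$, $k_1=2$, the weights are $(1,1,1)$, and $H_1=h_1=y^2+x$. At the triple point the gradient of $z^2+H_1$ is $(1,0,0)\neq 0$, so $\widehat{V}$ is \emph{smooth} there; nevertheless $y=0$ is a multiple root of $h_{\nu_1}(1,y)=y^2$ (in dimension $2$ the strict transform of ${\bf C}$ is tangent to $\E_1$), and the point does lie in $NT$ in dimension $3$, because $\widehat{V}\cap E_1=\{x=0,\ z^2+y^2=0\}$ is a singular curve. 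So smoothness of $\widehat{V}$ does not detect failure of $\Q$-normal crossings of the configuration $\widehat{V}+E_0+E_1$, which is what $NT$ measures; your argument proves neither direction of the needed equivalence at these points.

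The criterion can be repaired: at a triple point $(0,y_0,0)$ of the first chart, $\Q$-normal crossings is equivalent to the linear independence of the three differentials $dx$, $dz$, $d(z^k+H_1)$, whose determinant is $\pm(H_1)_y(0,y_0)$; this \emph{single partial} (not the full gradient) vanishes exactly at the multiple roots of $h_{\nu_1}(1,y)$, i.e.\ exactly over $NT(\varpi_1^{*}({\bf C}))$. With that substitution (and a check that the coordinate change $(x,y,z)\mapsto(x,\,z^k+H_1,\,z)$ is equivariant under the stabilizer, of type $(\tfrac{k}{k_1};-1,0,\tfrac{\nu_1}{k_1})$, cf.\ Remark~\ref{differ2}), your Jacobian route becomes valid on the stratum $\{z=0\}$ minus the two origins of $E_1$, and is arguably more elementary than what the paper does. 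You would still need to treat the origins $[0:1:0]$ and $[1:0:0]$ in their own quotient charts, since ${\bf C}\cap\E_1$ may contain $[0:1]$ or $[1:0]$; your one-line appeal to Remark~\ref{computation_local_number_V-surface} does not carry this out. The paper avoids local Jacobians altogether: using Proposition~\ref{bezout_th_P2w-mu_d} and Remark~\ref{computation_local_number_V-surface} it proves the intersection-number identity $\big(\widehat{V}\cap E_1,E_0\cap E_1;E_1\big)_P=\tfrac{k_1}{k}\big({\bf C},\E_1;\widehat{\C}^2_{(p_1,q_1)}\big)_P$, and since the stratum $\{z=0\}\subset E_1$ carries a quotient group of order $k/k_1$, transversality in dimension $3$ means intersection number $k_1/k$ rather than $1$; this uniform argument settles all triple points, origins included, at once.
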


\begin{proof}
The weighted blow-up at $P$ in the tangent cone is described in detail in the first part of the proof of Lemma~\ref{step1}. Thus we only consider here the weighted blow-up at $P$ with respect to $\big( \frac{k p_1}{k_1}, \frac{k q_1}{k_1}, \frac{\nu_1}{k_1} \big)$ in dimension $3$. 

The new space has in general three cyclic quotient singular lines, see Remark~\ref{differ2}(1) below, each of them isomorphic to~$\P^1$, and located at the new exceptional divisor $E_1$. They correspond to the three lines at infinity of~$E_1 = \P^2 \big( \frac{k p_1}{k_1}, \frac{k q_1}{k_1}, \frac{\nu_1}{k_1} \big)$.

The multiplicity of $E_1$ is the sum of the multiplicities, in our local coordinates, of the components of the divisor $\pi^{*}_0(V)$ that pass through~$P$, that is, $m \frac{\nu_1}{k_1} + k \frac{\nu_1}{k_1} = (m+k)\frac{\nu_1}{k_1}$.

Hence the total transform is the divisor
$$
\pi_1^{*} \pi^{*}_0(V) = \widehat{V} + m E_0 + (m+k) \frac{\nu_1}{k_1} E_1.
$$

To study the locus of non-transversality, the equations in the three charts are calculated in the table below. Note that the cyclic quotient spaces are represented by their normalized types, since $\gcd \big(\frac{k p_1}{k_1}, \frac{k q_1}{k_1}, \frac{\nu_1}{k_1} \big)=1$, see Example~\ref{blowup_dim3_smooth}.
$$
\begin{array}{|c|c c c|c c|}
\hline &&&&& \\[-0.325cm]
&& X \left( \displaystyle \frac{k p_1}{k_1}; -1, \frac{k q_1}{k_1}, \frac{\nu_1}{k_1} \right) &&& X \left( \displaystyle \frac{k q_1}{k_1}; \frac{k p_1}{k_1}, -1, \frac{\nu_1}{k_1} \right) \\[0.4cm]
( x,y,z ) \stackrel{\pi_1}{\longmapsto} && ( x^{\frac{k p_1}{k_1}}, x^{\frac{k q_1}{k_1}} y, x^{\frac{\nu_1}{k_1}} z ) &&& (x y^{\frac{k p_1}{k_1}}, y^{\frac{k q_1}{k_1}}, y^{\frac{\nu_1}{k_1}} z ) \\[0.1cm]
\hline
E_0 && z=0 &&& z=0 \\
E_1 && x=0 &&& y=0 \\
\widehat{V} && z^k+h_1(x^{\frac{k}{k_1}},y)=0 &&& z^k + h_2(x,y^{\frac{k}{k_1}})=0 \\
\hline
\end{array}
$$

\vspace{0.1cm}

$$
\begin{array}{|c|c c|}
\hline && \\[-0.325cm]
&& X \left( \displaystyle\frac{\nu_1}{k_1}; \frac{k p_1}{k_1}, \frac{k q_1}{k_1},-1 \right)\\[0.4cm]
(x,y,z) \stackrel{\pi_1}{\longmapsto} && (x z^{\frac{k p_1}{k_1}}, y z^{\frac{k q_1}{k_1}}, z^{\frac{\nu_1}{k_1}}) \\[0.1cm]
\hline
E_0 && -\\
E_1 && z=0\\
\widehat{V} && 1 + h_{\nu_1}(x,y) + z^{\frac{kl}{k_1}} h_{\nu_1+l}(x,y) + \cdots = 0\\
\hline
\end{array}
$$

\vspace{0.25cm}

Clearly $E_1$ and $E_0$ intersect transversely. The strict transform $\widehat{V}$ also cuts $E_1$ transversely except perhaps at $\{ z = 0 \} \subset E_1$. The equations of these intersections are given by
\begin{align*}
E_0\cap E_1 & = \{z=0\}, \\
\widehat{V}\cap E_1 & = \{ z^k + h_{\nu_1}(x,y) = 0\},
\end{align*}
as projective subvarieties in $E_1 = \P^2 \big(\frac{k p_1}{k_1}, \frac{k q_1}{k_1}, \frac{\nu_1}{k_1} \big)$.

By Proposition~\ref{bezout_th_P2w-mu_d}, these smooth projective curves have self-intersection numbers $\frac{k_1 \nu_1}{k^2 p_1 q_1}$ and $\frac{k_1 \nu_1}{p_1 q_1}$ respectively. They meet at $\#({\bf C}\cap \E_1)$ points with intersection number $k_1/k$ times the intersection number in ${\bf C}\cap \E_1$, that is, for $P \in {\bf C} \cap \mathcal{E}_1 \equiv \widehat{V} \cap E_0 \cap E_1$, one has
\begin{equation}\label{inter_YS}
\left( \widehat{V} \cap E_1, E_0 \cap E_1;\, E_1 \right)_P = \frac{k_1}{k} \cdot \left( {\bf C}, \mathcal{E}_1;\, \widehat{\C}^2_{(p_1,q_1)} \right)_P.
\end{equation}

On the other hand, the intersection of the total transform with $E_0$ produces an identical situation to the tangent cone, see Remark~\ref{differ2}(2) for a more detailed explanation.

\begin{figure}[h t]
\centering
\includegraphics[scale=0.94]{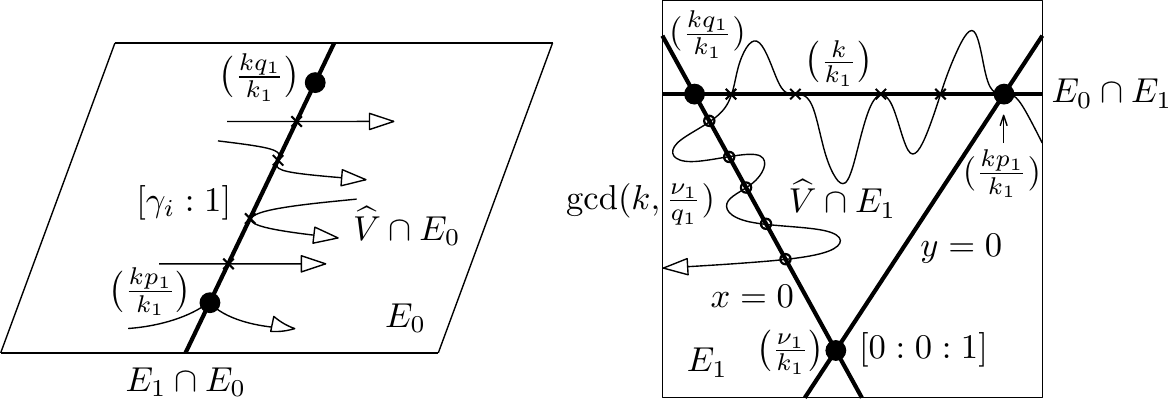}
\caption{Step $1$ in the embedded ${\bf Q}$-resolution of $(V,0)$.}\label{step1_V_YS}
\end{figure}

All these statements follow from the equations above. In Figure~\ref{step1_V_YS}, we see the intersection of the divisor $\pi^{*}_1 \pi^{*}_0 (V)$ with $E_0$ and~$E_1$, respectively. See also Figure~\ref{step1_C} for the situation in ${\bf C}$.
Finally, the triple points of the total transform in dimension $3$ are identified with the points of ${\bf C}\cap \E_1$ and, by~(\ref{inter_YS}), the intersection at one of those points is transverse if and only if so is it in dimension $2$. This concludes the proof.
\end{proof}

\begin{remark}\label{differ2}
Just to emphasize, we collect below the main differences with the embedded $\Q$-resolution for SIS at this stage, cf.~Lemma~\ref{step1} and its proof.
\begin{enumerate}
\item The stratum $\{ z = 0 \} \setminus \{ [0:1:0], [1:0:0] \} \subset E_1$ contains singular points of the ambient space. In fact, the group acting on these points is of type $\big( \frac{k}{k_1}; -1, 0, \frac{\nu_1}{k_1} \big)$, see Figure~\ref{step1_V_YS}.
\item In principle, the intersection of $E_0$ with the rest of components seem to be different from the situation in the tangent cone, because in the first chart $E_1 \cap E_0 = \{ x=0 \}$ and $\widehat{V} \cap E_0 = \{ h_1(x^{k/k_1},y) = 0 \}$ on $X\big( \frac{k p_1}{k_1}; -1, \frac{k q_1}{k_1} \big)$. After normalizing the latter type, one finds the equation of $\E_1$ and ${\bf C}$ on $X(p_1;-1,q_1)$, cf.~\ref{justification_non-normal_YS}.
\item Write $h_{\nu_1}(x,y) = x^{a} y^{b} \prod_{i} (x^{q_1} - \gamma_i^{q_1} y^{p_1})^{e_i} = 0$. If $a=0$, or equivalently $\E_1 \ni [0:1] \notin {\bf C}$, then $\{ x= 0\} \subset E_1$ cuts $\widehat{V} \cap E_1 = \{ z^k + h_{\nu_1}(x,y) = 0 \}$ in exactly $\gcd(k, \frac{\nu_1}{q_1})$ points different from the origins of $E_1$. Analogously, $\{ y = 0 \} \subset E_1$ intersects in $\gcd(k,\frac{\nu_1}{p_1})$ points if $b=0$. This can be checked directly or applying Bézout's Theorem on $E_1$.
\end{enumerate}
\end{remark}


Let $\Gamma$ and $\Gamma_{+}$ be the dual graphs associated with the total transform and the exceptional divisor, after having computed an embedded $\Q$-resolution of $({\bf C}, P)$, respectively. Denote by $S(\Gamma)$ and $S(\Gamma_{+})$ the sets of their vertices. The classical partial order on $S(\Gamma_{+})$ is denoted by $\preccurlyeq$.

\vspace{0.15cm}

The locus of non-transversality after the last blow-up in dimension~3 is identified with the locus of non-transversality in the resolution of~$({\bf C},P)$. Each of these points corresponds to a weighted blow-up in the resolution of the tangent cone, that is, to a vertex of $\Gamma_{+}$. Thus in the next step we need to blow-up those points to produce a similar situation. Again the same operation will be applied to the points where the total transform is not a normal crossing divisor. These points will be associated with vertices of $\Gamma_{+}$ too.

\vspace{0.15cm}

Before describing a generic step, blowing up the point $P_b$ as in Proposition~\ref{step_b_SIS}, let us clarify the justification for working with non-normalized spaces.

\begin{nothing}\label{justification_non-normal_YS}
After the first blow-up the local equation of the total transform of $({\bf C},P)$ is given by $x^{\nu_1} h_1(x,y) : X(p_1; -1, q_1) \to \C$, see proof of Lemma~\ref{step1}. The situation in dimension $3$ is provided by
$$
  \underbrace{x^{(m+k) \frac{\nu_1}{k_1}}_{}}_{E_1} \cdot \underbrace{z^m_{}}_{E_0} \cdot \underbrace{\big[ z^k + h_1(x^{\frac{k}{k_1}},y) \big]}_{\widehat{V}} :\, X \bigg( \frac{k p_1}{k_1}; -1, \frac{k q_1}{k_1}, \frac{\nu_1}{k_1} \bigg) \longrightarrow \C,
$$
as we have just seen in the proof of Lemma~\ref{step1_YS}. The divisors $E_1$ and $\E_1$ are both represented by $x=0$.

However, the equation of the strict transform of ${\bf C}$ and $\widehat{V}$ do not correspond to each other directly. This obstruction can be solved working with non-normalized types, since the function
$$
  x^{\frac{k \nu_1}{k_1}} h_1 (x^{\frac{k}{k_1}},y):\, X \bigg( \frac{k p_1}{k_1}; -1, \frac{k q_1}{k_1} \bigg) \longrightarrow \C
$$
also gives rise to the total transform of ${\bf C}$ on a space represented by a non-normalized type.

On the other hand, the embedded $\Q$-resolution of a Yomdin-L\^{e} surface singularity will contain in general non-cyclic quotient singularities. Hence providing normalized types is long and tedious. Motivated by this fact and for better understanding of the relationship  between ${\bf C}$ and $(V,0)$, we present the embedded $\Q$-resolution without explicitly giving the normalized type of each quotient space.
\end{nothing}


The following result is proven by induction on $S(\Gamma_{+})$ using the relation~$\preccurlyeq$. Lemma \ref{step1_YS} and~\ref{justification_non-normal_YS} just above is the first step in the induction.
Let $b\in S(\Gamma_{+})$ be a vertex such that $P_b$ belongs to the locus of non-transversality of the total transform. As usual, denote by $\E_b$ the exceptional divisor appearing after blowing up the point $P_b$.

\begin{prop}[Step $b$]\label{step_b_YS}
Let $\varpi_b$ be the $(p_b,q_b)$-weighted blow-up at $P_b$ with $b\in S(\Gamma_{+})$. Denote by $\E_b$ its exceptional divisor, $\nu_b$ the $(p_b,q_b)$-multiplicity of ${\bf C} \subset \C^2$, and $m_b$ the multiplicity of $\E_b$. Assume, if necessary, that $k|p_b$ and $k|q_b$ so that $k|\nu_b$ too.

Consider $\pi_b$ the $(p_b,q_b,\frac{\nu_b}{k})$-weighted blow-up at $P_b$ in dimension $3$ and $E_b$ the corresponding exceptional divisor. Then, after blowing up the point~$P_b$, the new total transform verifies: 
\begin{enumerate}
\item The exceptional divisor $E_b$ is isomorphic to $\P^2 (p_b, q_b, \frac{\nu_b}{k}) / \mu_{\be}$ and its multiplicity equals $(m+k) \frac{m_b}{k_b}$. In general, their three lines at infinity are quotient singular in the ambient space.

\item Let $a$ be a vertex such that $a \prec b$. Then, $E_a \cap E_b \neq \emptyset$ if and only if
$P_b \in \E_a$. In such a case, the curve $E_a \cap E_b$ is one of the two lines at infinity of $E_b$ different from~$\{z=0\}$. If $P_b \in \E_a \cap \E_{a'}$, $a\neq a'$, then the corresponding lines are different and hence they meet at the point $[0:0:1]$.

\item The intersection of the rest of components with $E_0$ produces an identical situation to the resolution of $({\bf C},P)$, after blowing up the point $P_b$. More precisely,
\begin{align*}
\widehat{V} \cap E_0 & = {\bf C}, \\
E_b \cap E_0 & = \E_b, \\
E_a \cap E_0 & = \E_a, \quad \forall a \preccurlyeq b.
\end{align*}

\item The curves $E_0 \cap E_b = \{z=0\}$ and $\widehat{V} \cap E_b = \{ z^{k} + H_{\nu_b}(x,y) = 0 \}$ have self-intersection numbers $\frac{- \E_b^2 \nu_b k_b}{k^2 \l}$ and $\frac{- \E_b^2 \nu_b k_b}{\l}$ respectively, and the intersecting points can be identified with ${\bf C}\cap \E_b$.

Moreover, the intersection multiplicity of these two curve at those points can be computed as follows. Let $P \in \widehat{V} \cap E_0 \cap E_b \equiv {\bf C} \cap \mathcal{E}_b$, then one has
$$
\hspace{1.5cm} \left( \widehat{V} \cap E_b, E_0 \cap E_b;\, E_b \right)_P = \frac{1}{O(E_{b,z})} \cdot \left( {\bf C}, \mathcal{E}_b;\, \widehat{\C}^2_{(p_b, q_b)} \big/ \mu_{e} \right)_P,
$$
where $O(E_{b,z})$ denotes the order of the group acting on the natural stratum $E_{b,z} := \{ z = 0 \} \setminus \{ [0:1:0], [1:0:0] \} \subset E_b$.

Let $P_b\in \E_a$ ($a  \prec b$) and assume e.g.~$E_a \cap E_b = \{ x= 0 \} \subset E_b$. If ${\bf C} \cap \E_a \cap E_b = \emptyset$, then $E_a \cap E_b$ and $\widehat{V}\cap E_b$ meet transversely at exactly $\gcd(k,m(\check{\E}_{b,x}))$ points different from the origins of $E_b$. Otherwise, i.e.~${\bf C} \cap \E_a \cap E_b \neq \emptyset$, the letter curves only meet at one point, which besides passes through $E_0 \cap E_b$. This is the case when there exist quadruple points.

\item The locus of non-transversality of the total transform in dimension~3 is identified with the one in the resolution of $({\bf C},P)$. These points belong to $\widehat{V} \cap E_0 \cap E_b \equiv {\bf C} \cap \E_b$ and they correspond to the ones where the curves $E_0 \cap E_b$ and $\widehat{V} \cap E_b$, or equivalently $\E_b$ and~${\bf C}$, do not meet transversely.

\item The strict transform $\widehat{V}$ never passes through $[0:0:1] \in E_b$.
\end{enumerate}
\end{prop}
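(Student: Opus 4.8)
The plan is to proceed by induction on $S(\Gamma_{+})$ with respect to $\preccurlyeq$, in exact parallel with Proposition~\ref{step_b_SIS}, taking Lemma~\ref{step1_YS} together with~\ref{justification_non-normal_YS} as the base case. The inductive hypothesis is that around $P_b$ the strict transform has local equation $z^k + H(x,y) = 0$ with ${\bf C} = \{H(x,y)=0\}$, that $E_0 = \{z=0\}$, and that the neighbouring exceptional divisors $E_a,E_{a'}$ are the coordinate hyperplanes $\{x=0\},\{y=0\}$; concretely (after the identifications of~\ref{justification_non-normal_YS}) the total transform is a well-defined function of the shape
$$
\underbrace{x^{(m+k)\frac{m_a}{k_a}}\, y^{(m+k)\frac{m_{a'}}{k_{a'}}}}_{E_a,\ E_{a'}} \cdot \underbrace{z^m}_{E_0} \cdot \underbrace{\big[\, z^k + H(x,y) \,\big]}_{\widehat{V}}
$$
on a (in general non-normalized) quotient space. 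First I would perform the $(p_b,q_b,\frac{\nu_b}{k})$-weighted blow-up via the charts of Example~\ref{blowup_dim3_singular} and record the pullback of each factor in all three charts. The decisive structural point, and the reason for weighting $z$ by $\nu_b/k$ rather than $\nu_b$, is that in the first two charts the strict transform again acquires the form $z^k + H_i(x,y) = 0$ (with $H_1(x,y)=H(x^{p_b},x^{q_b}y)/x^{\nu_b}$, and symmetrically for $H_2$); this is precisely what propagates the hypothesis and keeps the induction running.

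With the local equations in hand, claims (1)--(3) follow by inspection: the multiplicity of $E_b$ is the appropriately normalized sum of $(p_b,q_b,\frac{\nu_b}{k})$-multiplicities of the components through $P_b$, which yields $(m+k)\frac{m_b}{k_b}$ (consistent with the base case, where $m_1=\nu_1$); the identification $E_b\cong \P^2(p_b,q_b,\frac{\nu_b}{k})/\mu_{\be}$, the location of the singular axes, and the incidence of $E_a\cap E_b$ with the lines at infinity are read directly off the charts. Claim (5) is where the Yomdin-L\^{e} case genuinely departs from the superisolated one: the straightening automorphism $[(x,y,z)]\mapsto[(x,y,z+g(x,y))]$ used in the proofs of Lemma~\ref{step2} and Proposition~\ref{step_b_SIS} is no longer available, since $z^k+H$ cannot be linearized in $z$; indeed $\widehat{V}$ is itself singular along the singular locus of $\{H=0\}$. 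Instead I would argue that, after discarding the monomial divisors, the surface $\{z^k+H=0\}$ is smooth exactly where ${\bf C}=\{H=0\}$ is smooth and meets the coordinate hyperplanes transversally away from $\widehat V\cap E_0$; hence the non-normal-crossing locus of the total transform coincides with the one in the resolution of $({\bf C},P)$, namely $\widehat V\cap E_0\cap E_b\equiv {\bf C}\cap\E_b$.

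For (4) the self-intersections of the two sections $\{z=0\}$ and $\{z^k+H_{\nu_b}(x,y)=0\}$ in $E_b$ follow from the weighted B\'ezout formula of Proposition~\ref{bezout_th_P2w-mu_d}, using that their $\w$-degrees are $\nu_b/k$ and $\nu_b$; the factor $k$ between these degrees accounts for the ratio $k^2$ between the two self-intersection numbers. The local intersection multiplicity at a point of ${\bf C}\cap\E_b$ I would compute via Remark~\ref{computation_local_number_V-surface}, pulling the two sections back to $\C^2$ and dividing by the order of the acting group; the correction factor $\frac{1}{O(E_{b,z})}$ (the generalization of the factor $\frac{k_1}{k}$ in~\eqref{inter_YS}) appears precisely because, unlike in the superisolated case, the stratum $E_{b,z}=\{z=0\}\setminus\{[0:1:0],[1:0:0]\}$ carries nontrivial isotropy, cf.~Remark~\ref{differ2}(1) which identifies $O(E_{1,z})=k/k_1$ in the base case. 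The count of $\gcd(k,m(\check{\E}_{b,x}))$ transverse intersection points of $E_a\cap E_b$ with $\widehat V\cap E_b$ follows as in Remark~\ref{differ2}(3) by applying B\'ezout on the line at infinity, and (6) is immediate: in the third chart $X(\frac{\nu_b}{k};p_b,q_b,-1)$ the strict transform reads $1+H(xz^{p_b},yz^{q_b})/z^{\nu_b}=1+H_{\nu_b}(x,y)+\cdots=0$, whose value at the origin $[0:0:1]\in E_b$ is $1\neq 0$. I expect the main obstacle to be the normal-crossing and intersection-multiplicity bookkeeping of (4)--(5) in the deliberately non-normalized quotient types of~\ref{justification_non-normal_YS}: without the $z$-straightening trick one must track the isotropy of the stratum $\{z=0\}$ with care to pin down both the transversality statements and the correction factor $\frac{1}{O(E_{b,z})}$ simultaneously.
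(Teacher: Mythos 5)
Your overall strategy coincides with the paper's: induction on $S(\Gamma_{+})$ with respect to $\preccurlyeq$, base case Lemma~\ref{step1_YS} together with~\ref{justification_non-normal_YS}, computation of the total transform in the charts of Example~\ref{blowup_dim3_singular}, and Proposition~\ref{bezout_th_P2w-mu_d} plus Remark~\ref{computation_local_number_V-surface} for claims (4)--(6); your observations about the loss of the $z$-straightening automorphism, the correction factor $\frac{1}{O(E_{b,z})}$, and the third-chart argument for (6) are all sound. However, there is a genuine gap in your inductive hypothesis, and it sits exactly at the point the paper is most careful about. You assume that around $P_b$ the total transform is $x^{(m+k)\frac{m_a}{k_a}}\, y^{(m+k)\frac{m_{a'}}{k_{a'}}}\, z^m\,[z^k+H(x,y)]$, i.e.\ that the exponents of the monomial factors are the \emph{multiplicities} of $E_a$ and $E_{a'}$. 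On a non-normalized quotient space the exponent of a divisor in a local equation is not its multiplicity: by Definition~\ref{def_mult_1} they differ by the normalization factor of the chart. This is why the paper formulates the hypothesis with raw exponents $x^{\frac{(m+k)n_a}{k}}\, y^{\frac{(m+k)n_{a'}}{k}}$, where $x^{n_a}y^{n_{a'}}H(x,y):X(\be;\br,\bs)\to\C$ is the two-dimensional equation, and warns explicitly that ``$n_a$ and $n_{a'}$ are \emph{not} the multiplicities of $\E_a$ and $\E_{a'}$''.

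The two formulations agree at the base case, where $n_1=\frac{k\nu_1}{k_1}$ gives $\frac{n_1}{k}=\frac{m_1}{k_1}$, but they diverge afterwards, so your induction does not close. Concrete instance: take $k=4$ and a singular point of ${\bf C}$ with equation $y^2-x^3=0$; after the first blow-up with $(p_1,q_1)=(1,1)$ one has $\nu_1=2$, $k_1=2$, and the non-normalized $2$-dimensional chart is $X(2;-1,2)$ with equation $x^{4}(y^{2}-x^{2})$, so $n_1=4$. Blowing up its origin with $(p_b,q_b)=(4,4)$ (so that $k\mid p_b,q_b$) gives $\nu_b=8$ and $n_b=p_b n_1+\nu_b=24$, hence $E_b$ appears in the new chart with exponent $\frac{(m+k)n_b}{k}=6(m+4)$; on the other hand $m_b=6$ and $k_b=2$, so the multiplicity of $E_b$ is $(m+k)\frac{m_b}{k_b}=3(m+4)$. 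The exponent is $L=\gcd(\l,\frac{n_b}{k})=2$ times the multiplicity, so the hypothesis you would need at the next vertex is false, and your derivation of (1) (``the appropriately normalized sum of multiplicities'') has no way to produce the factor $L$. The repair is precisely the paper's device: carry the raw exponents $n_a,n_{a'}$ and the congruence $\frac{n_a}{k}\br+\frac{n_{a'}}{k}\bs+\bt\equiv 0 \pmod{\be}$ through the induction, and only at the end convert exponent into multiplicity via the germ isomorphism $[(x,y,z)]\mapsto(x^{L},y,z)$ together with the identity $L=\gcd(\l,\frac{n_b}{k})$, proved from the fact that the third column of the new type is a combination of the first two; this is what yields the value $(m+k)\frac{m_b}{\gcd(k,m_b)}$ claimed in part (1).
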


\begin{proof}
By induction on $S(\Gamma_{+})$ with respect to the order $\preccurlyeq$. The base case is Lemma~\ref{step1_YS} together with its modification explained in~\ref{justification_non-normal_YS}. As for the inductive step, one proceeds as in the proof of Lemma~\ref{step2}. Assume, by induction, that the local equation of the total transform in the resolution of the tangent cone around~$P_b$ is given by the function
\begin{equation*}
  x^{n_{a}} y^{n_{a'}} H(x,y)\, :\, X(\be; \br, \bs) \longrightarrow \C,
\end{equation*}
where ${\bf C} = \{ H(x,y) = 0 \}$ is the equation of the strict transform and the others correspond to the divisors $\E_a$ and $\E_{a'}$ (they may not appear if $n_a$ or $n_{a'}$ equals zero). In principle, the type $(\be; \br, \bs)$ is not assumed to be normalized. Hence $n_a$ and $n_{a'}$ are not the multiplicities of $\E_{a}$ and $\E_{a'}$.

Also, the equation of the total transform around $P_b$ in dimension~$3$ is given by the function
\begin{equation*}
x^{\frac{(m+k) n_{a}}{k}} \cdot y^{\frac{(m+k) n_{a'}}{k}} \cdot z^{m} \cdot \big[ z^k + H(x,y) \big]\, :\, X(\be; \br, \bs, \bt) \longrightarrow \C,
\end{equation*}
where $\widehat{V} = \{ z^k + H(x,y) = 0\}$ is the strict transform, $E_0 = \{z=0\}$, and the others are the divisors $E_a$ and $E_{a'}$ (if they exist). Using that both equations are well-defined functions on the corresponding quotient spaces, one has
\begin{equation}\label{condition_on_t}
\frac{n_{a}}{k} \cdot \br + \frac{n_{a'}}{k} \cdot \bs + \bt \equiv 0 \quad (\text{mod}\ \be).
\end{equation}

The verification of the statement is very simple once the local equations of the divisors appearing in the total transform are calculated. The main ideas behind are contained in the proof of Lemma~\ref{step1_YS} and~\ref{justification_non-normal_YS}. The details are omitted to avoid repeating the same arguments; only the local equations are given, see below. To do so, consider the following data and use the charts described in Examples~\ref{blowup_dim2} and~\ref{blowup_dim3_singular}. As auxiliary results Propositions~\ref{formula_self-intersection} and~\ref{bezout_th_P2w-mu_d} and Remark~\ref{computation_local_number_V-surface} are also needed.
$$
\begin{array}{lcl}
\nu_b := \ord_{(p_b,q_b)} H (x,y) & \hspace{0.75cm} & n_b := p_b \cdot n_a + q_b \cdot n_{a'} + \nu_b \\[0.2cm]
H_1(x,y) := \frac{H(x^{p_b}, x^{q_b} y)}{x^{\nu_b}} && H_2(x,y) := \frac{H(x y^{p_b},y^{q_b})}{y^{\nu_b}}
\end{array}
$$

Note that if $Q_1^{\bf C}$ denotes the quotient space of the first chart in the tangent cone (see below) and $(Q_1^{\bf C}, [(0,1)]) \cong (\C^2, (0,1))$, $[(x,y)] \mapsto (x^{\l},y)$ defines an isomorphism of germs, then the multiplicity of the new exceptional divisor $\E_b$ is $m_b = \frac{n_b}{\l}$.

These are the equations in the resolution of the tangent cone. They are presented as zero sets omitting their multiplicities.

\vspace{0.1cm}

\begin{center}
\begin{tabular}{|ll|c|}
\hline
\multicolumn{2}{|c|}{Equations} & Chart\\[0.175cm]
\hline
$\E_b:$ & $x=0$ & \multirow{3}{*}{$\displaystyle X \bigg( \begin{array}{c | c c} p_b & -1 & q_b \\ p_b \be & \br & p_b \bs - q_b \br \end{array} \bigg) \ \longrightarrow \ \, \widehat{\C}^2(p_b,q_b)\Big/\mu_{\be}$}\\
$\E_{a}:$ & $-$ &\\
$\E_{a'}$ & $y=0$ & \\
${\bf C}:$ & $H_1 (x,y) = 0$ & $\big[(x,y)\big] \ \mapsto \ \big[ \big((x^{p_b},x^{q_b} y),[1:y]_{(p_b,q_b)}\big) \big]$\\[0.25cm]
\hline 
$\E_b:$ & $y=0$ & \multirow{3}{*}{$\displaystyle X \bigg( \begin{array}{c | c c} q_b & p_b & -1 \\ q_b \be & q_b \br - p_b  \bs & \bs \end{array} \bigg) \ \longrightarrow \ \, \widehat{\C}^2(p_b,q_b)\Big/\mu_{\be}$}\\
$\E_a:$ & $x=0$ &\\
$\E_{a'}:$ & $-$ & \\
${\bf C}:$ & $H_2 (x,y) = 0$ & $\big[(x,y)\big] \ \mapsto \ \big[ \big((x y^{p_b},y^{q_b}),[x:1]_{(p_b,q_b)}\big) \big]$\\[0.25cm]
\hline
\end{tabular}
\end{center}

\vspace{0.1cm}

In dimension~$3$, the local equations of the total transform are presented as well-defined functions over the corresponding quotient spaces. The notation is self-explanatory to recognize the equation of each divisor. In the first chart, however, it is indicated the divisor corresponding to each equation. Note that, for instance, the polynomial in the first chart has been obtained after performing the substitution $(x,y,z) \mapsto (x^{p_b}, x^{q_b} y, x^{\frac{\nu_b}{k}} z)$. \vspace{0.1cm}
$$
\begin{array}{r|cl}
\text{1st chart\ } && X \bigg( \begin{array}{c | c c c} p_b & -1 & q_b & \frac{\nu_b}{k} \\ p_b \be & \br & p_b \bs - q_b \br & p_b \bt - \frac{\nu_b}{k} \br \end{array} \bigg) \longrightarrow \C \\[0.5cm]
&& \underbrace{x^{\frac{(m+k) n_b}{k}}_{}}_{E_b} \cdot \underbrace{y^{\frac{(m+k) n_{a'}}{k}}_{}}_{E_{a'}} \cdot \underbrace{z^m_{}}_{E_0} \cdot \underbrace{\big[ z^k + H_1 (x, y) \big]}_{\widehat{V}}
\\ \multicolumn{2}{c}{} \\
\text{2nd chart\ } && X \bigg( \begin{array}{c | c c c} q_b & p_b & -1 & \frac{\nu_b}{k} \\ q_b \be & q_b \br - p_b  \bs & \bs & q_b \bt - \frac{\nu_b}{k} \bs \end{array} \bigg) \longrightarrow \C \\[0.5cm]
&& x^{\frac{(m+k) n_a}{k}}\cdot y^{\frac{(m+k) n_b}{k}} \cdot z^m \cdot \big[ z^k + H_2 (x, y) \big]
\\ \multicolumn{2}{c}{} \\
\text{3rd chart\ } && X \bigg( \begin{array}{c | c c c} \frac{\nu_b}{k} & p_b & q_b & -1 \\ \frac{\nu_b}{k} \be & \frac{\nu_b}{k} \br - p_b \bt & \frac{\nu_b}{k} \bs - q_b \bt & \bt \end{array} \bigg) \longrightarrow \C \\[0.5cm]
&& x^{\frac{(m+k) n_a}{k}} \cdot y^{\frac{(m+k) n_{a'}}{k}} \cdot z^{\frac{(m+k) n_b}{k}} \cdot \Big[ 1 + \frac{H(x z^{p_b}, y z^{q_b})}{z^{\nu_b}} \Big]
\end{array}
$$

\vspace{0.1cm}

Note that if $Q_1^{\bf V}$ denotes the quotient space of the first chart in dimension~3 (see above) and $(Q_1^{\bf V}, [(0,1,1)]) \cong (\C^3, (0,1,1))$, $[(x,y,z)] \mapsto (x^{L},y,z)$ defines an isomorphism of germs, then the multiplicity of the new exceptional divisor $E_b$ is $\frac{(m+k) n_b}{k L}$.
\end{proof}

\begin{remark}
Observe that the columns of the new spaces satisfy a condition analogous to~\eqref{condition_on_t}. For example, using~\eqref{condition_on_t}, it can be checked that
$$
  \frac{n_b}{k} \cdot \begin{pmatrix} -1 \\ \br \end{pmatrix} + \frac{n_{a'}}{k} \cdot \begin{pmatrix} q_b \\ p_b \bs - q_b \br \end{pmatrix} + \begin{pmatrix} \frac{\nu_b}{k} \\ p_b \bt - \frac{\nu_b}{k} \br \end{pmatrix} \equiv \begin{pmatrix} 0 \\ \mathbf{0} \end{pmatrix}, \quad \text{mod} \begin{pmatrix} p_b \\ p_b \be \end{pmatrix}.
$$
In other words, the third column is a linear combination of the first two ones, modulo the order of the corresponding group. This can be used to prove that $L = \gcd(\l, \frac{n_b}{k})$ and hence the multiplicity of $E_b$ is $\frac{(m+k) \cdot m_b}{\gcd(k,m_b)}$ indeed.
\end{remark}

\begin{theo}\label{summery_theorem_YS}
Given an embedded $\Q$-resolution of $({\bf C},P)$ for all $\, P\in \Sing({\bf C})$, one can construct an embedded $\Q$-resolution of $(V,0)$, consisting of weighted blow-ups at points. Each of these blow-ups corresponds to a weighted blow-up in the resolution of $({\bf C},P)$ for some $P\in \Sing ({\bf C})$, that is, it corresponds to a vertex of $\Gamma_{+}^P$. \hfill $\Box$
\end{theo}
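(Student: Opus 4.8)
The plan is to assemble the result from the step-by-step construction already carried out, organizing the weighted blow-ups in dimension~3 so that they mirror, one for one, the blow-ups of the given embedded $\Q$-resolution of the tangent cone at each singular point. First I would invoke the initial blow-up $\pi_0$ at the origin (Lemma~\ref{step_zero_YS}): after it the total transform is $\widehat{V}+mE_0$, the strict transform $\widehat{V}$ meets $E_0$ transversely away from $\Sing({\bf C})$, and consequently $NT(\pi_0^{*}(V))=\Sing({\bf C})$. This localizes the remaining problem at the finitely many points $P\in\Sing({\bf C})$, which may be treated independently.

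Fix such a $P$ and let the given embedded $\Q$-resolution $\varrho^{P}$ of $({\bf C},P)$ be encoded by its dual graph $\Gamma_{+}^{P}$, whose vertices $a\in S(\Gamma_{+}^{P})$, ordered by $\preccurlyeq$, correspond to the successive $(p_a,q_a)$-weighted blow-ups at points of the non-transversality locus. I would then construct the dimension-3 resolution $\rho\colon X\to(\C^3,0)$ by induction on $(S(\Gamma_{+}^{P}),\preccurlyeq)$: the base case is the weighted blow-up of Lemma~\ref{step1_YS} (with the normalization subtlety explained in~\ref{justification_non-normal_YS}), and the inductive step is exactly Proposition~\ref{step_b_YS}, performing the $(p_b,q_b,\frac{\nu_b}{k})$-blow-up at the point $P_b$ identified with the vertex $b$, after the harmless adjustment $k|p_b$, $k|q_b$ that makes $\frac{\nu_b}{k}\in\N$. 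Each such blow-up is a weighted blow-up at a point and corresponds, by construction, to exactly one vertex of $\Gamma_{+}^{P}$, which is the final clause of the statement.

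The two properties of Proposition~\ref{step_b_YS} that drive the argument are parts (5) and (6). Part (5) asserts that at every stage the locus of non-transversality of the total transform in dimension~3 is identified with the corresponding locus in the resolution of $({\bf C},P)$; hence exhausting all vertices of $S(\Gamma_{+}^{P})$, a finite set because $\varrho^{P}$ is a genuine embedded $\Q$-resolution, leaves the tangent cone ${\bf C}$ meeting each exceptional divisor transversely and therefore leaves $\rho^{*}(V,0)$ a $\Q$-normal crossing divisor. Part (6), that $\widehat{V}$ never passes through the distinguished point $[0:0:1]\in E_b$, guarantees that the strict transform encounters only the controlled (abelian, generically cyclic) quotient singularities produced by the weighted blow-ups, so that $X$ is a $V$-manifold with abelian quotient singularities and $\rho$ is an isomorphism over the complement of $\rho^{-1}(\Sing(V))$. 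Collecting these facts verifies the three conditions in the definition of an embedded $\Q$-resolution.

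The genuinely technical work is absorbed into Lemma~\ref{step1_YS} and Proposition~\ref{step_b_YS}; what remains for this theorem is the bookkeeping of the induction. Accordingly, the main obstacle I anticipate is not analytic but combinatorial: one must check that the order $\preccurlyeq$ on $S(\Gamma_{+}^{P})$ is respected, so that whenever a vertex $b$ is processed all its predecessors $a\prec b$ have already been blown up and the incidence relation $E_a\cap E_b\neq\emptyset\Leftrightarrow P_b\in\E_a$ of Proposition~\ref{step_b_YS}(2) is available; and that the dimension-2 and dimension-3 non-transversality loci are identified at every intermediate stage, not merely at the end, so that the inductive hypothesis feeding Proposition~\ref{step_b_YS} holds at each step. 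Once this poset bookkeeping is in place the theorem follows immediately.
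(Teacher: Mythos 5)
Your proposal is correct and matches the paper's approach: the paper states Theorem~\ref{summery_theorem_YS} with no separate proof precisely because it is the summary of the construction you describe, namely the initial blow-up of Lemma~\ref{step_zero_YS}, the base case Lemma~\ref{step1_YS} together with~\ref{justification_non-normal_YS}, and the inductive step Proposition~\ref{step_b_YS} carried out over $(S(\Gamma_{+}^{P}),\preccurlyeq)$ at each $P\in\Sing({\bf C})$, with parts (5) and (6) ensuring termination and transversality. The only minor imprecision is attributing the $V$-manifold property of $X$ to Proposition~\ref{step_b_YS}(6); that property really comes from the fact that weighted blow-ups of spaces with abelian quotient singularities again have abelian quotient singularities (Examples~\ref{blowup_dim3_smooth} and~\ref{blowup_dim3_singular}), but this does not affect the validity of the argument.
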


By Lemma~\ref{euler_char_YS} and Theorem~\ref{char_poly_YS}, an exceptional divisor $E_a^P$ in the $\Q$-resolution of~$(V,0)$ contributes to the monodromy if and only if so does the corresponding divisor $\E_a^P$ in $({\bf C},P)$. Hence the weights can be chosen so that every exceptional divisor, except perhaps the first one $E_0$, contributes to its monodromy.

\section{The Characteristic Polynomial of YLS}\label{sec_char_poly_YS}

Here we plan to apply Theorem~\ref{ATH2} to compute the characteristic polynomial of the monodromy and the Milnor number of $(V,0)$ in terms of its tangent cone $({\bf C},P)$. Some notation need to be introduced, concerning the stratification of each irreducible component of the exceptional divisor in terms of its quotient singularities.

\vspace{0.25cm}

Given a point $P \in \Sing({\bf C})$, denote by $\varrho^{P}: Y^p \to ({\bf C},P)$ an embedded $\Q$-resolution of the tangent cone. Assume that the total transform is given by
$$
(\varrho^{P})^{*}({\bf C},P) = {\bf C} + \sum_{a \in S(\Gamma_{+}^{P})} m_a^{P} \mathcal{E}_a^{P},
$$
where $\E_a^{P}$ is the exceptional divisor of the $(p_a^P,q_a^P)$-blow-up at a point $P_a$ belonging to the locus of non-transversality. Denote by $\nu_a^P$ the $(p_a^P,q_a^P)$-multiplicity of ${\bf C}$ at $P_a$.

Recall that $\E_a^P$ is naturally isomorphic to $\P^1_{(p_a^P,q_a^P)} / \mu_{\be}$. Using this identification, see Figure~\ref{strata_EaP_YS}, define
$$
\E^P_{a,1} = \E^{P}_a \setminus \{ [0:1], [1:0] \}, \qquad
\E^P_{a,x} = \{ [0:1] \}, \qquad
\E^P_{a,y} = \{ [1:0] \}.
$$
The strata $\check{\E}_{a,j}^P := \E^{P}_{a,j} \setminus \big( \E_{a,j}^{P} \cap \big( \bigcup_{b \neq a} \E_b^P \cup {\bf C} \big) \big)$ for $j=1,x,y$ (see notation just above Theorem~\ref{ATH2}) will be considered in Lemma~\ref{euler_char_YS}.

\vspace{0.25cm}

Let us see the situation in the Yomdin-L\^{e} singularity $(V,0)$. Denote by $\rho: X \to (V,0)$ the embedded $\Q$-resolution obtained following Proposition~\ref{step_b_YS}. Then, the total transform is (recall $k_a^P := \gcd(k,m_a^P)$)
$$
  \rho^{*}(V,0) = \widehat{V} + m E_0 + \sum_{\begin{subarray}{c} P \in \Sing({\bf C}) \\ a \in S(\Gamma_{+}^{P})\end{subarray}} (m+k) \frac{m_a^P}{k_a^P} E_{a}^{P},
$$
and $E_a^P$ appears after the blow-up at the point $P_a$ with suitable weights (recall that the locus of non-transversality in dimension 2 and 3 are identified).

The divisor $E_a^P$ is naturally isomorphic to $\P^2_{\w} / \mu_{\be}$. Using this identification, see Figure~\ref{strata_EaP_YS}, define
$$
\begin{array}{l}
E_{a,1}^P = E_a^P \setminus \{ xyz = 0 \}, \qquad E_{a,x}^P = \{ x=0 \} \setminus \{ [0:1:0], [0:0:1] \}, \\[0.2cm]
E_{a,y}^P = \{ y=0 \} \setminus \{ [1:0:0], [0:0:1] \},  \hfill E_{a,xy}^P = \{ [0:0:1 \}.
\end{array}
$$
Analogously, one considers $E_{a,z}^P$, $E_{a,xz}^P$, and $E_{a,yz}^P$ so that $E_a^P = \bigsqcup_{j} E_{a,j}^{P}$ really defines a stratification. However, these three strata belong to more than one irreducible divisor in the total transform and hence they do not contribute to the characteristic polynomial.

As for $E_0$, according to its quotient singularities, no stratification need to be considered (it is smooth).

The Euler characteristic of $\check{E}_0$ and $\check{E}_{a,j}^P := E^{P}_{a,j} \setminus \big( E_{a,j}^{P} \cap \big( \bigcup_{b \neq a} E_b^P \cup \widehat{V} \big) \big)$ for $j=1,x,y,xy$ (see notation just above Theorem~\ref{ATH2}) as well as its multiplicity are calculated in Lemma~\ref{euler_char_YS}.

\begin{figure}[h t]
\centering
\includegraphics{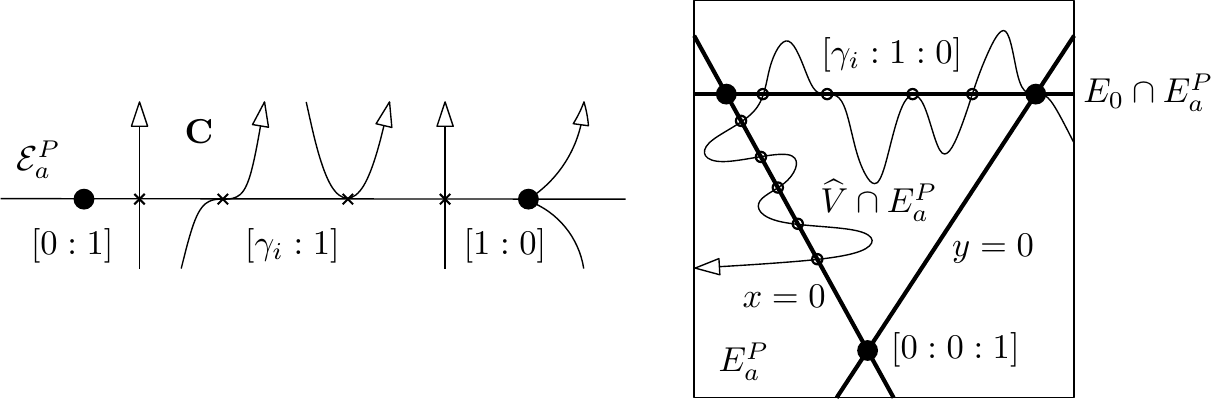}
\caption{Stratification of $\E_a^{P}$ and $E_a^{P}$.}
\label{strata_EaP_YS}
\end{figure}

The following three results are presented without their proofs because they do not provide any new idea. They are the analogous of Lemma~\ref{euler_char_SIS}, Theorem~\ref{char_poly_SIS}, and Corollary~\ref{cor_char_poly_SIS}, respectively. Anyway, recall that the Euler characteristic of $\P^2 \setminus {\bf C}$ is $m^2-3m+3-\sum_{P \in \Sing(P)} \mu_P$.

\begin{lemma}\label{euler_char_YS}
Using the previous notation, the Euler characteristic and the multiplicity of $\check{E}_0$ are
$$
\chi (\check{E}_0) = \chi (\P^2 \setminus {\bf C}), \qquad m(\check{E}_0) = m.
$$

For the rest of strata of $\check{E}_{a}^{P}$, let us fix a point $P \in \Sing({\bf C})$. Then, one has that
$$
\chi ( \check{E}_{a,j}^{P} ) = \begin{cases} 1 & a = 1,\ j=xy \\ 0 & a \neq 1,\ j = xy \\ - \gcd \big( k, m(\check{\E}_{a,j}^{P}) \big) \cdot \chi ( \check{\E}_{a,j}^{P} ) & \forall a,\ j=1,x,y\,; \end{cases}
$$
$$
\chi ( \check{\E}_{a,j}^{P} ) \neq 0 \ \Longrightarrow \ m( \check{E}_{a,j}^{P} ) = \begin{cases} m+k & a = 1,\ j=xy \\[0.25cm]
\displaystyle \frac{(m+k) \cdot m(\check{\E}_{a,j}^{P})}{\gcd \big( k, m(\check{\E}_{a,j}^{P}) \big)} & \forall a,\ j=1,x,y. \end{cases}
$$
In fact, $\forall a \in S(\Gamma_{+}^{P})$, $a \neq 1$, the stratum $\check{E}^{P}_{a,xy}$ is empty and, in particular, its Euler characteristic is zero. \hfill $\Box$ 
\end{lemma}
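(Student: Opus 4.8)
The plan is to re-run the proof of Lemma~\ref{euler_char_SIS}, computing each invariant at the instant its divisor is born and reading equations and local groups off the charts of Proposition~\ref{step_b_YS}; the only structural change is that $\widehat V$ is now cut out by $z^k+H_{\nu_a}(x,y)=0$ rather than $z+h_{\nu_a}(x,y)=0$, and I isolate the factors $\gcd(k,\cdot)$ this introduces. The stratum $\check E_0$ is unchanged: $\check E_0\cong E_0\setminus(\widehat V\cap E_0)\cong\P^2\setminus{\bf C}$ with multiplicity $m$. For the rest I fix $P\in\Sing({\bf C})$, drop the superscript $P$, and recall from Proposition~\ref{step_b_YS}(1) that $E_a\cong\P^2(p_a,q_a,\nu_a/k)/\mu_{\be}$ carries $E_a$ with divisor multiplicity $(m+k)m_a/k_a$, where $k_a=\gcd(k,m_a)$; moreover the generic point of the surface resolution of $({\bf C},P)$ is smooth, so $m(\check\E_{a,1})=m_a$.

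For the Euler characteristics the case $j=xy$ is verbatim as in the superisolated proof: $[0:0:1]$ lies on a single component exactly when $a=1$, since for $a\neq1$ Proposition~\ref{step_b_YS}(2) forces two lines at infinity of $E_a$ to meet there; thus $\chi(\check E_{1,xy})=1$ and $\check E_{a,xy}=\emptyset$ for $a\neq1$. For $j=1,x,y$ I use the projection $\tau\colon E_a\setminus\{[0:0:1]\}\to\E_a$, $[x:y:z]\mapsto[x:y]$, which restricts to a fibration $\check E_{a,j}\to\check\E_{a,j}$. Over each base point the part of the fibre lying in $E_{a,j}$ is the punctured $z$-line $\{[x_0:y_0:z]:z\neq0\}\cong\C^*$ (the puncture $z=0$ being the already-excluded $E_0\cap E_a$), and $\check E_{a,j}$ deletes from it the points of $\widehat V\cap E_a$, i.e.\ the solutions of $z^k=-H_{\nu_a}(x_0,y_0)$.

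The decisive new input is that in the relevant quotient this equation has not $k$ but exactly $\gcd(k,m(\check\E_{a,j}))$ solutions, so the fibre contributes $\chi=-\gcd(k,m(\check\E_{a,j}))$ and multiplicativity of $\chi$ in the fibration gives $\chi(\check E_{a,j})=-\gcd(k,m(\check\E_{a,j}))\,\chi(\check\E_{a,j})$. For $j=x,y$ this count is exactly Proposition~\ref{step_b_YS}(4) (cf.\ Remark~\ref{differ2}(3)); for $j=1$ I verify it by the orbit count of Remark~\ref{differ2}(1): in the first chart the $k$ roots $z_0\mu_k$ are permuted freely by the stabiliser of the generic fibre, leaving $\gcd(k,m(\check\E_{a,1}))$ of them. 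For $a=1$ this stabiliser is $\mu_{k/k_1}$ acting through $\xi\mapsto\xi^{\nu_1/k_1}$, and $\gcd(k/k_1,\nu_1/k_1)=1$ yields $k_1$ orbits; the general case is identical once the stabiliser is read off Proposition~\ref{step_b_YS}.

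The multiplicities follow from Definition~\ref{def_mult_1} at a generic point of each stratum, dividing the divisor multiplicity $(m+k)m_a/k_a$ of $E_a$ by the lcm $L$ of the normal variable. At $[0:0:1]\in E_1$ the group is $\mu_{\nu_1/k_1}$ and the normal variable has weight $-1$, so $L=\nu_1/k_1$ and $m(\check E_{1,xy})=m+k$. Along the interior $E_{a,1}$ the ambient space is smooth (a generic point of $\P^2_{\w}$ is smooth on the weighted blow-up, Example~\ref{blowup_dim3_smooth}), so $L=1$ and, using $m(\check\E_{a,1})=m_a$, one gets $m(\check E_{a,1})=(m+k)m_a/k_a=(m+k)m(\check\E_{a,1})/\gcd(k,m(\check\E_{a,1}))$; along the axes $j=x,y$ the local group is the cyclic one of Example~\ref{blowup_dim3_smooth}, and the normalisations used for the superisolated multiplicity $m(\check E_{b,x})=(m+1)m(\check\E_{b,x})$ (Remark~\ref{remark_acampo_key}) give the stated formula with $(m+1)$ replaced by $(m+k)/\gcd(k,m(\check\E_{b,x}))$. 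The only real obstacle is this arithmetic bookkeeping: one must check that $L$ along each axis combines with $(m+k)m_a/k_a$ to produce exactly $(m+k)m(\check\E_{a,j})/\gcd(k,m(\check\E_{a,j}))$, compatibly with the orbit count $\gcd(k,m(\check\E_{a,j}))$ of fibre intersections. Both reduce to the elementary identity $\gcd(k/g,\mu/g)=1$ with $g=\gcd(k,\mu)$ (applied to $\mu=m(\check\E_{a,j})$) together with $\gcd(p_a,q_a)=1$, after which the four cases assemble exactly as in Lemma~\ref{euler_char_SIS}.
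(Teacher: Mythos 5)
Your proposal is correct and follows exactly the route the paper intends: the paper omits this proof, declaring it the analogue of Lemma~\ref{euler_char_SIS}, and you carry out precisely that analogy, rerunning the fibration argument of the SIS case with the strict transform $z^k+H_{\nu_a}(x,y)=0$ and extracting the new $\gcd(k,\cdot)$ factors from the orbit/intersection counts already recorded in Proposition~\ref{step_b_YS}(1),(4) and Remark~\ref{differ2}. The only caveats are cosmetic: for later steps the generic transversal group need not literally be trivial as in Example~\ref{blowup_dim3_smooth} (one should instead invoke the intrinsic multiplicity $(m+k)m_a/k_a$ from Proposition~\ref{step_b_YS}(1) and the remark following its proof, which is what your argument effectively uses), and the $j=x,y$ multiplicity bookkeeping you defer is exactly the YLS analogue of Remark~\ref{remark_acampo_key}.
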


\begin{theo}\label{char_poly_YS}
The characteristic polynomial of the complex monodromy of $(V,0)$ is
$$
  \Delta_{(V,0)}(t) = \frac{(t^m-1)^{\chi(\P^2 \setminus {\bf C})}}{t-1}
  \prod_{P\in \Sing({\bf C})} \Delta_{({\bf C},P)}^{k} (t^{m+k}),
$$
where $\Delta_{({\bf C},P)}(t)$ denotes the characteristic polynomial of the local complex monodromy of $({\bf C},P)$ and if $\Delta(t) = \prod_i (t^{m_i} - 1)^{a_i}$, then $\Delta^k(t)$ denotes
\begin{center}
\hfill $\displaystyle \Delta^k(t) = \prod_i \left( t^{\frac{m_i}{\gcd(m_i,k)}} - 1 \right)^{\gcd(m_i,k) a_i}$. \hfill $\Box$
\end{center}
\end{theo}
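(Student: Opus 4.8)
The plan is to follow the proof of Theorem~\ref{char_poly_SIS} almost verbatim, simply feeding in the Yomdin--L\^e data of Lemma~\ref{euler_char_YS} in place of Lemma~\ref{euler_char_SIS}; the only genuinely new ingredient is to recognize the gcd-twisting produced by Lemma~\ref{euler_char_YS} as exactly the operation $\Delta \mapsto \Delta^{k}$. First I would record A'Campo's formula (Theorem~\ref{ATH2}) for the tangent cone at a fixed $P \in \Sing({\bf C})$, which is precisely~\eqref{formula_AC_tangent_cone},
$$
\Delta_{({\bf C},P)}(t) = (t-1)\prod_{\begin{subarray}{c} a \in S(\Gamma_{+}^{P}) \\ j=1,x,y \end{subarray}} \big( t^{m(\check{\mathcal{E}}_{a,j}^{P})}-1 \big)^{-\chi(\check{\mathcal{E}}_{a,j}^{P})},
$$
and, in parallel, A'Campo's formula for $(V,0)$ applied to the $\Q$-resolution $\rho$ built in Proposition~\ref{step_b_YS}, whose relevant strata are $\check{E}_0$ together with the $\check{E}_{a,j}^{P}$ for $j=1,x,y,xy$ (the strata $j=z,xz,yz$ lie in several divisors and drop out).

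Second, I would substitute the data of Lemma~\ref{euler_char_YS} into the $(V,0)$-formula. The stratum $\check{E}_0$ contributes $(t^{m}-1)^{\chi(\P^2\setminus{\bf C})}$; among the $j=xy$ strata only $a=1$ survives, with $\chi=1$ and multiplicity $m+k$, giving a single factor $(t^{m+k}-1)$, while all $a\neq 1$ strata with $j=xy$ are empty. For $j=1,x,y$, Lemma~\ref{euler_char_YS} gives $\chi(\check{E}_{a,j}^{P}) = -\gcd(k,m(\check{\mathcal{E}}_{a,j}^{P}))\,\chi(\check{\mathcal{E}}_{a,j}^{P})$ and $m(\check{E}_{a,j}^{P}) = (m+k)\,m(\check{\mathcal{E}}_{a,j}^{P})/\gcd(k,m(\check{\mathcal{E}}_{a,j}^{P}))$, so each such factor becomes
$$
\Big( t^{\frac{(m+k)\,m(\check{\mathcal{E}}_{a,j}^{P})}{\gcd(k,m(\check{\mathcal{E}}_{a,j}^{P}))}}-1 \Big)^{-\gcd(k,m(\check{\mathcal{E}}_{a,j}^{P}))\,\chi(\check{\mathcal{E}}_{a,j}^{P})}.
$$

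Third comes the key identification. Writing $\Delta_{({\bf C},P)}(t)=\prod_i (t^{m_i}-1)^{a_i}$ as above (the factor $t-1$ corresponds to $m_i=1$, $a_i=1$, and each factor $t^{m(\check{\mathcal{E}}_{a,j}^{P})}-1$ to $m_i=m(\check{\mathcal{E}}_{a,j}^{P})$, $a_i=-\chi(\check{\mathcal{E}}_{a,j}^{P})$), the definition of $\Delta^{k}$ replaces each factor by $(t^{m_i/\gcd(m_i,k)}-1)^{\gcd(m_i,k)\,a_i}$, and the further substitution $t\mapsto t^{m+k}$ reproduces exactly the factors computed in the second step. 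In particular the factor $t-1$, for which $\gcd(1,k)=1$, is sent to $(t^{m+k}-1)$, matching the surviving triple-point factor $a=1$, $j=xy$. Hence for each $P$ the product equals $\Delta_{({\bf C},P)}^{k}(t^{m+k})$, and assembling the prefactor with the product over $P\in\Sing({\bf C})$ yields the statement.

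The main obstacle is not geometric --- that content is carried entirely by Lemma~\ref{euler_char_YS} --- but lies in the algebraic bookkeeping of the gcd's: one must verify that the exponent twist $m_i\mapsto m_i/\gcd(m_i,k)$, $a_i\mapsto \gcd(m_i,k)\,a_i$ emerging from Lemma~\ref{euler_char_YS} is precisely the operation defining $\Delta^{k}$, and that this operation is well defined independently of how $\Delta_{({\bf C},P)}$ is written as a product of factors $t^{m_i}-1$. The latter holds because the multiplicities of the distinct factors $t^{m_i}-1$ are uniquely determined (e.g.\ by M\"obius inversion against the cyclotomic factorization) and the twist is additive in $a_i$ for fixed $m_i$. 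One should also keep the three bookkeeping contributions separate: the global $1/(t-1)$ coming from Theorem~\ref{ATH2} (with $(-1)^n=1$ since $n=2$) and the $E_0$-factor $(t^m-1)^{\chi(\P^2\setminus{\bf C})}$ together form the prefactor, whereas the surviving triple-point factor $(t^{m+k}-1)$ belongs to the per-$P$ product, where it plays the role of the image of $t-1$ under $\Delta^{k}(t^{m+k})$.
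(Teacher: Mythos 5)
Your proposal is correct and is essentially the paper's intended argument: the paper explicitly omits the proof of Theorem~\ref{char_poly_YS} as the direct analogue of Theorem~\ref{char_poly_SIS}, which is exactly what you carry out, substituting the Euler characteristics and multiplicities of Lemma~\ref{euler_char_YS} into A'Campo's formula (Theorem~\ref{ATH2}) and identifying the resulting gcd-twisted factors, including the surviving $a=1$, $j=xy$ factor $(t^{m+k}-1)$ as the image of $t-1$, with $\Delta^{k}_{({\bf C},P)}(t^{m+k})$. Your additional remark that $\Delta^{k}$ is well defined independently of the chosen factorization (by uniqueness of the exponents of the distinct factors $t^{m_i}-1$) is a worthwhile point the paper leaves implicit.
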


\begin{cor}\label{cor_char_poly_YS}
The Milnor number of a Yomdin-L\^{e} surface singularity can be expressed in terms of the Milnor numbers of the singular points of the tangent cone, namely
\begin{center}
\hfill $\displaystyle \mu(V,0) = (m-1)^3 + k \sum_{P\in \Sing ({\bf C})} \mu ({\bf C},P)$. \hfill $\Box$
\end{center}
\end{cor}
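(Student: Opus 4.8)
The plan is to obtain $\mu(V,0)$ as the degree of the characteristic polynomial $\Delta_{(V,0)}(t)$ furnished by Theorem~\ref{char_poly_YS}, following verbatim the strategy of Corollary~\ref{cor_char_poly_SIS}. First I would invoke the standard fact that the Milnor number equals the degree of the characteristic polynomial, together with the two inputs already used in the SIS case: for a plane curve germ the degree of the local characteristic polynomial is its Milnor number, so $\deg \Delta_{({\bf C},P)}(t) = \mu({\bf C},P)$, and $\chi(\P^2\setminus{\bf C}) = m^2 - 3m + 3 - \sum_{P} \mu_P$.

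The one genuinely new point, and the only place where the argument departs from the SIS computation, is the behaviour of the operation $\Delta \mapsto \Delta^k$ appearing in Theorem~\ref{char_poly_YS}. The key claim is that this operation \emph{preserves degree}. Writing $\Delta(t) = \prod_i (t^{m_i}-1)^{a_i}$, the $i$-th factor has degree $m_i a_i$ and is sent to $\big(t^{m_i/\gcd(m_i,k)}-1\big)^{\gcd(m_i,k)\, a_i}$, whose degree is $\frac{m_i}{\gcd(m_i,k)} \cdot \gcd(m_i,k) \cdot a_i = m_i a_i$ as well. Since the degree is additive over products, $\deg \Delta^k = \deg \Delta$; in particular $\deg \Delta_{({\bf C},P)}^k(t) = \mu_P$, and the substitution $t \mapsto t^{m+k}$ multiplies the degree by $m+k$, so $\deg \Delta_{({\bf C},P)}^k(t^{m+k}) = (m+k)\,\mu_P$.

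Finally I would add up the contributions of the three factors in the formula of Theorem~\ref{char_poly_YS}: the numerator $(t^m-1)^{\chi(\P^2\setminus{\bf C})}$ contributes $m\,\chi(\P^2\setminus{\bf C})$, the denominator $t-1$ contributes $-1$, and the product over $P \in \Sing({\bf C})$ contributes $(m+k)\sum_P \mu_P$. Substituting $\chi(\P^2\setminus{\bf C}) = m^2-3m+3-\sum_P\mu_P$ and simplifying via $m(m^2-3m+3) - 1 = (m-1)^3$ collapses the terms, with the surviving cross-term $(m+k)\sum_P\mu_P - m\sum_P\mu_P = k\sum_P\mu_P$, yielding exactly $\mu(V,0) = (m-1)^3 + k\sum_P \mu_P$. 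I do not anticipate any serious difficulty here: the main (and essentially only) point requiring care is the degree-invariance of $\Delta \mapsto \Delta^k$, after which the arithmetic is identical to the SIS case, the sole change being that the factor $m+1$ there is replaced by $m+k$, which is precisely what promotes $\sum_P\mu_P$ to $k\sum_P\mu_P$.
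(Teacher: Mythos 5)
Your proposal is correct and follows essentially the same route as the paper: the paper omits the proof of this corollary on the grounds that it is the degree computation of Corollary~\ref{cor_char_poly_SIS} with $m+1$ replaced by $m+k$, which is exactly what you carry out. Your key observation that the operation $\Delta \mapsto \Delta^k$ preserves degree, since $\frac{m_i}{\gcd(m_i,k)} \cdot \gcd(m_i,k)\, a_i = m_i a_i$, is precisely the point that makes the adaptation work, and the remaining arithmetic (with $(m+k)\sum_P \mu_P - m\sum_P \mu_P = k \sum_P \mu_P$ producing the factor $k$) matches the paper's SIS argument.
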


\def\cprime{$'$}

\end{document}